\newtheorem{fact}{Fact}[section]
\newtheorem{theorem}[fact]{Theorem}
\newtheorem{corollary}[fact]{Corollary}
\newtheorem{example}[fact]{Example}
\newtheorem{lemma}[fact]{Lemma}
\newenvironment{proof}[1][Proof]{\textbf{#1.} }{\ \rule{0.5em}{0.5em}}
\numberwithin{equation}{section}
\newcommand{\diag}{\operatorname{diag}}
\newcommand{\law}{\operatorname{law}}
\newcommand{\sk}[1]{\left\langle #1 \right\rangle}
\newcommand{\I}{\mathcal{I}}
\newcommand{\B}{\mathcal{B}}
\newcommand{\E}{\mathbb{E}}
\newcommand{\cL}{\mathcal{L}}
\newcommand{\s}{\mathcal{S}}
\def\R{{\mathbb R}}  
\def\N{{\mathbb N}}  
\def\Z{{\mathbb Z}}  
\def\C{{\mathbb C}}  
\def\F{{\mathcal{F}}}
\def\T{{\mathcal{T}}}
\def\G{{\mathcal{G}}}  
\begin{document}
\thispagestyle{empty}

\begin{center}
{\LARGE A supersymmetric approach to martingales
related to the vertex-reinforced jump process}\\[3mm]
{\large Margherita Disertori\footnote{Institute for Applied Mathematics
\& Hausdorff Center for Mathematics, 
University of Bonn,
Endenicher Allee 60,
D-53115 Bonn, Germany.
E-mail: disertori@iam.uni-bonn.de}
\hspace{1cm} 
Franz Merkl \footnote{Mathematical Institute, University of Munich,
Theresienstr.\ 39,
D-80333 Munich,
Germany.
E-mail: merkl@math.lmu.de
}
\hspace{1cm} 
Silke W.W.\ Rolles\footnote{Zentrum Mathematik, Bereich M5,
Technische Universit{\"{a}}t M{\"{u}}nchen,
D-85747 Garching bei M{\"{u}}nchen,
Germany.
E-mail: srolles@ma.tum.de}
\\[3mm]
{\small \today}}\\[3mm]
\end{center}

\begin{abstract}
Sabot and Zeng have discovered two martingales, one of which played a key role
in their investigation of the vertex-reinforced jump process.
Starting from the related supersymmetric hyperbolic sigma model, we 
give an alternative derivation of these two martingales. 
They turn out to be the first two instances in an infinite hierarchy of 
martingales, derived from a generating function. \\[1mm]
{\footnotesize Key words: nonlinear sigma model, vertex-reinforced jump process, 
martingale; \\
MSC 2010: 60G60 (primary), 60G42, 82B44 (secondary)}
\end{abstract}


\section{Introduction}
In \cite{sabot-tarres-zeng15}, Sabot, Tarr\`es, and Zeng proved that the
vertex-reinforced jump process can be related to a certain   random Schr\"odinger operator. 
A convenient way to characterize the corresponding random environment $\beta$ 
is its Laplace transform, investigated in \cite{sabot-tarres-zeng15} using a 
matrix decomposition from linear algebra. 

Subsequently,  Sabot and Zeng have discovered, in  \cite{sabot-zeng15},
 that a certain field $\psi^{(n)}$ associated to the random Schr\"odinger operator, 
on increasing finite pieces (with wired boundary conditions) of an 
infinite graph exhibits a martingale property. 
This turns out to be the crucial ingredient to prove, among other things, a 
characterization of the recurrence and transience behavior of the vertex-reinforced
jump process  on an arbitrary locally finite graph and, in a certain 
parameter regime, a functional central limit theorem for this process 
on $\Z^d$ with $d\ge 3$. Ergodicity with respect to spatial translations of 
the limit of the mentioned martingale is also one of the key ingredients 
for Sabot and Zeng's proof of recurrence of linearly edge-reinforced random 
walk on $\Z^2$ with arbitrary constant initial weights.  

Sabot and Zeng have also described the (discrete) quadratic variation of the 
mentioned martingale in terms of a second martingale
involving the Green's function of the random Schr\"odinger operator. 

In the present paper, we show that these two martingales are the first two 
instances of an infinite hierarchy of 
martingales, described in Corollary \ref{cor:further-mgs} below. The infinite 
hierarchy is obtained by expanding a martingale consisting of generating 
functions; cf.\ Theorem \ref{th:generating-exponential-martingale}. 

Our starting point is the supersymmetric hyperbolic sigma model $H^{2|2}$,
invented by Zirnbauer \cite{zirnbauer-91} and investigated by Disertori, Spencer, and 
Zirnbauer in \cite{disertori-spencer-zirnbauer2010}.
In \cite{sabot-tarres2012}, Sabot and Tarr\`es showed that this model is related
to the mixing measures for both vertex-reinforced jump process and edge-reinforced random walk.
Key ingredients in our analysis are the symmetries of $H^{2|2}$ and a local scaling transformation.

\paragraph{Overview of this article.}
In Section \ref{se:def-main-results}, Zirnbauer's $H^{2|2}$ model is 
defined formally and the main results are stated.

In Section \ref{subse:local-scaling-transf} we introduce the mentioned local
scaling transformation of the random field $(e^u,s)$, described by $H^{2|2}$.
In Theorem \ref{th:image-measure} we  describe the Radon-Nikodym  
derivative of the distribution of the transformed field with respect to the 
original random field.
It allows us also to give a short 
alternative proof of the Laplace transform of $\beta$ from \cite{sabot-tarres-zeng15};
cf.\ Corollary \ref{co:laplace-trafo-beta} below. 
Taking  $H^{2|2}$ as a starting point, the measurability argument
required to show the various martingale properties is a little easier 
than in the random Schr\"odinger operator approach. This is why we include 
the argument in Section \ref{subse:measurability}. 

In Section \ref{se:first-martingal}, using Theorem \ref{th:image-measure}
and the fact $\E[e^{u_k}]=1$ known from \cite{disertori-spencer-zirnbauer2010}, 
we give a short alternative proof for the first martingale from \cite{sabot-zeng15}; 
cf.\ Theorem \ref{th:first-mg} below. 
In addition to the local scaling transformation, our proofs in Sections 
\ref{se:first-martingal} and \ref{se:further-martingales} of the martingale
properties use a Kolmogorov consistency discovered by Sabot and Zeng 
\cite{sabot-zeng15} for the random environment $\beta$. 

In Section \ref{se:susy}, we first review the symmetries of $H^{2|2}$ that we need
for our proof. These include ordinary euclidean rotations and a $Q$-supersymmetry 
introduced in \cite{disertori-spencer-zirnbauer2010}. 
Using these (super-)symmetries, we derive Ward identities for 
certain harmonic functions; see Lemma \ref{le:rot-sym}.
The proof of this lemma is based on two main ingredients.
First, the mean value theorem for 
harmonic functions localizes the average over a circle at its center. 
Second, a technique from \cite{disertori-spencer-zirnbauer2010} localizes 
the expectation of $Q$-supersymmetric functions at the zero field configuration. 

In Section \ref{se:further-martingales}, a combination of these Ward identities 
with the local scaling transformation from 
Section \ref{subse:local-scaling-transf}
yields a generating martingale. An infinite sequence of martingales
is then produced by Taylor expansion. 

Finally, in Section \ref{se:letac}, we use Theorem \ref{thm:expectation-G}, which 
is also a basic ingredient for the generating martingale, to prove a
formula discovered by Letac (unpublished).

\section{Definitions and main results}
\label{se:def-main-results}

\subsection{Finite graph}\label{subse:finite-graph}
Let $\tilde\G=(\tilde V,\tilde E)$ be a finite, undirected, connected graph with vertex set 
$\tilde V$ and edge set $\tilde E$. The graph is assumed to have no self-loops. 
We fix a reference vertex $\delta\in \tilde V$ and abbreviate $V:=\tilde V\setminus\{\delta\}$. 
We assign positive weights $W_{ij}=W_{ji}>0$ to every undirected edge 
$(i\sim j)\in \tilde E$ and set $W_{ij}=0$ for $i\not\sim j$. In particular, $W_{ii}=0$. 
Let 
\begin{align}
U_V:= & 
\{ u=(u_i)_{i\in \tilde V}\in\R^{\tilde V}:\; u_\delta=0\}, \\
\label{eq:def-Omega}
\Omega_V:= & U_V\times U_V= 
\{ (u=(u_i)_{i\in \tilde V},s=(s_i)_{i\in \tilde V})\in\R^{\tilde V}\times\R^{\tilde V}:\; 
u_\delta=0,s_\delta=0\}.
\end{align}
For $u\in U_V$, we define the (negative) discrete Laplacian
$A^W(u)\in\R^{\tilde V\times \tilde V}$ associated to the weights $W_{ij}e^{u_i+u_j}$ by
\begin{align}
\label{eq:def-A}
A_{i,j}^W(u)=\left\{\begin{array}{ll}
-W_{ij}e^{u_i+u_j} & \text{for }i\neq j, \\
\sum_{k\in \tilde V} W_{ik}e^{u_i+u_k}  & \text{for }i=j.
\end{array}\right. 
\end{align}
Let $A^W_{VV}(u)$ denote the submatrix of $A^W(u)$ obtained by 
deleting the $\delta$-th row and column, and 
 $\T$  the set of spanning trees of $\tilde\G$.

The $H^{2|2}$ model on $\tilde{\G}$ is given by a probability measure
$\mu^W$ on $\Omega_V$. Following \cite{disertori-spencer-zirnbauer2010} 
and \cite{disertori-spencer2010},
it can be written in the two following equivalent ways:
\begin{align}
 & \mu^W(du\, ds) \nonumber\\
= & 
\prod_{(i\sim j)\in \tilde E} e^{-W_{ij}[\cosh(u_i-u_j)+\frac12(s_i-s_j)^2e^{u_i+u_j}-1]}
\sum_{T\in\T}\prod_{(i\sim j)\in T} W_{ij} e^{u_i+u_j} 
\prod_{i\in V} e^{-u_i}\, \tfrac{du_ids_i}{2\pi} 
\nonumber\\
=&\  e^{-\frac12 s^t A^{W}(u) s}\det A^W_{VV}(u) \prod_{(i\sim j)\in \tilde E} e^{-W_{ij}[\cosh(u_i-u_j)-1]}
\prod_{i\in V} e^{-u_i}\, \tfrac{du_ids_i}{2\pi}
\label{eq:def-mu-W}
\end{align}
with $du_{i}$ and $ds_{i}$ denoting the Lebesgue measure on $\R$. 
Recall that $s_{\delta }=0$; hence 
we need only the submatrix $A^W_{VV}$ to evaluate the quadratic form $s^t A^{W} s$.
Because the graph $\tilde\G$ is connected, this quadratic form with the 
constraint $s_\delta=0$ is positive definite. In particular, the matrix $A^W_{VV}$
is invertible.

We define the Green's function 
$\hat G=\hat G^V=\hat G^{V,W}:U_V\to\R^{\tilde V\times \tilde V}$ by 
\begin{align}
\label{eq:def-hat-G}
\hat G_{ij}(u)= \left\{\begin{array}{ll}
e^{u_i}(A^W_{VV}(u)^{-1})_{ij}e^{u_j} & \text{for }i,j\in V, \\
0 & \text{for }i=\delta\text{ or }j=\delta. 
\end{array}\right.
\end{align}
Note that this definition is equivalent to the representation of 
$\hat G$ given in formula (4.4) in
\cite{sabot-zeng15}.
Furthermore, we introduce the random vector 
$\beta^{V,W}(u)= (\beta_{i}^{V,W} (u))_{i\in V}$ by  
\begin{align}\label{eq:beta-definition}
\beta_i^{V,W}(u) =\frac12 \sum_{j\in \tilde V:j\sim i} W_{ij} e^{u_j-u_i}.
\end{align}
When there is no risk of confusion we use the notation $\beta$, $\beta^V$, or $\beta^W$ 
(according to which dependence we want to stress) instead of $\beta^{V,W}$. 

We denote by $\E_{\mu^W}$ the expectation with respect to $\mu^W$ and by 
$\sk{a,b}=\sum_{i\in I}a_ib_i$  
the euclidean scalar product, where $I=V$ or $I=\tilde V$ depending on the context. 
We will also need the following space:
\begin{align}
\label{eq:def-Lambda}
\Lambda_V:=\{ \lambda=(\lambda_i)_{i\in \tilde V}\in(-1,\infty)^{\tilde V}:\; 
\lambda_\delta=0\}.
\end{align}
For $\lambda\in\Lambda_V$, we denote by $\lambda_V$ its restriction to $V$. 
Real functions of $\lambda$, like $\sqrt{1+\lambda}$, are understood 
componentwise. We abbreviate $e^u_{\tilde V}=(e^{u_i})_{i\in\tilde V}$. 

The main result of this section is the following generalization of the Laplace transform of 
$\beta=(\beta_{i})_{i\in V}$.

\begin{theorem}
\label{thm:expectation-G}
For all $\theta\in(-\infty,0]^{\tilde V}$ and all $\lambda\in\Lambda_V$, one has 
\begin{align}
\label{eq:expectation-G-exp-beta}
\E_{\mu^W}\left[e^{\sk{\theta, e^u_{\tilde V}}-\frac12\sk{\theta,\hat G(u)\theta}}e^{-\sk{\lambda_V,\beta}} \right]
= \cL^W(\lambda) e^{\sk{\theta,\sqrt{1+\lambda}}},
\end{align}
where 
\begin{equation}\label{eq:Laplace}
\cL^W(\lambda)=  \prod_{(i\sim j)\in \tilde E} e^{W_{ij}(1-\sqrt{1+\lambda_i} \sqrt{1+\lambda_j} )}
\prod_{i\in V} \frac{1}{\sqrt{1+\lambda_i}}. 
\end{equation}
\end{theorem}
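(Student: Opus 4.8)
The plan is to establish the identity \eqref{eq:expectation-G-exp-beta} in two stages: first as an identity in $\lambda$ for the simpler case $\theta=\0$, and then to incorporate the $\theta$-dependent factor by a change of measure argument. For the case $\theta=\0$, the left-hand side becomes $\E_{\mu^W}[e^{-\sk{\lambda_V,\beta}}]$, which is precisely the Laplace transform of $\beta$ from \cite{sabot-tarres-zeng15}, and the claim is that it equals $\cL^W(\lambda)$. I would derive this from the local scaling transformation of Section \ref{subse:local-scaling-transf}: applying the substitution $e^{u_i}\mapsto \sqrt{1+\lambda_i}\,e^{u_i}$ (which fixes $u_\delta$ since $\lambda_\delta=0$) and tracking the Radon--Nikodym derivative via Theorem \ref{th:image-measure}, the exponential weight $e^{-W_{ij}[\cosh(u_i-u_j)-1]}$ together with the tree-sum and the $\prod e^{-u_i}$ factors transform so as to produce exactly the factor $\prod_{(i\sim j)} e^{W_{ij}(1-\sqrt{1+\lambda_i}\sqrt{1+\lambda_j})}\prod_{i\in V}(1+\lambda_i)^{-1/2}$ times a residual term that, upon recognizing $\beta_i$ as in \eqref{eq:beta-definition}, reconstitutes $e^{-\sk{\lambda_V,\beta}}$ against $d\mu^W$; invariance of $\mu^W$ (total mass $1$) then closes the computation. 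This is the mechanism already advertised in the Overview as giving ``a short alternative proof of the Laplace transform of $\beta$.''

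For the general $\theta$, the key observation is that the factor $e^{\sk{\theta,e^u_{\tilde V}}-\frac12\sk{\theta,\hat G(u)\theta}}$ is built precisely so that it behaves well under the scaling transformation and is compatible with the structure of $\hat G$. Note that $\hat G$ involves $A^W_{VV}(u)^{-1}$ conjugated by $e^{u}$, and under the scaling $e^{u}\mapsto\sqrt{1+\lambda}\,e^{u}$ the matrix $A^W(u)$ rescales row/column $i$ by $\sqrt{1+\lambda_i}$ (up to the reweighting of $W$), so $\hat G$ transforms in a controlled way. I would feed the $\theta$-factor through the same change of variables used above; the exponent $\sk{\theta,e^u_{\tilde V}}$ becomes $\sk{\theta,\sqrt{1+\lambda}\cdot e^u_{\tilde V}}$ and the Green's function term adjusts correspondingly, so that after the substitution the whole integrand factorizes as (the $\lambda=\0$ integrand with parameter $\theta$) times $\cL^W(\lambda)$ times $e^{\sk{\theta,\sqrt{1+\lambda}-\theta}}$-type corrections. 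It then remains to verify the single identity
\begin{align}
\E_{\mu^W}\!\left[e^{\sk{\theta,e^u_{\tilde V}}-\frac12\sk{\theta,\hat G(u)\theta}}\right]=1
\label{eq:theta-normalization}
\end{align}
for $\theta\in(-\infty,0]^{\tilde V}$, which is the $\lambda=\0$ instance of the theorem. This, in turn, I expect to prove using the supersymmetry of the $H^{2|2}$ model: the integrand in \eqref{eq:theta-normalization} should be (the bosonic projection of) a $Q$-supersymmetric observable whose superintegral localizes at the zero field configuration, where it evaluates to $1$; alternatively it follows from the known fact $\E[e^{u_k}]=1$ combined with a Gaussian integration identity relating $\sk{\theta,\hat G\theta}$ to the $s$-field covariance $A^W_{VV}(u)^{-1}$, since completing the square in the $s$-Gaussian with a source term proportional to $\theta$ produces exactly $\tfrac12\sk{\theta,\hat G(u)\theta}$ in the exponent.

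Let me sketch the order of operations: (i) reduce \eqref{eq:expectation-G-exp-beta} to the scaling identity plus \eqref{eq:theta-normalization} by performing the substitution $e^{u}\mapsto\sqrt{1+\lambda}\,e^{u}$ and invoking Theorem \ref{th:image-measure} for the Jacobian and density; (ii) carefully verify that the exponential edge-weights, the spanning-tree polynomial $\det A^W_{VV}$, and the Green's function combine to give exactly $\cL^W(\lambda)$ and $e^{\sk{\theta,\sqrt{1+\lambda}}}$ with no leftover $u$-dependence beyond the $\lambda=\0$ integrand; (iii) prove \eqref{eq:theta-normalization} via the $s$-Gaussian integration, reducing it to $\E_{\mu^W}[e^{\sk{\theta,e^u_{\tilde V}}}\cdot(\text{det ratio})]=1$ and then to $\E[e^{u_k}]=1$ or directly to supersymmetric localization. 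The main obstacle is step (ii): one must check that the determinant $\det A^W_{VV}(u)$ and the $\prod e^{-u_i}$ measure factor transform with exactly the right powers of $(1+\lambda_i)$ to cancel against the Green's function rescaling and yield precisely the product in \eqref{eq:Laplace}, with the sign constraint $\theta\le\0$ ensuring all integrals converge; the bookkeeping of how many factors of $\sqrt{1+\lambda_i}$ come from each piece (there are contributions from $A^W$, from $\det A^W_{VV}$, from $e^{-u_i}$, and from $\hat G$) is where an error would most easily creep in, so I would organize it by grouping all $\lambda_i$-powers attached to a fixed vertex $i$ and checking the total exponent is $-1/2$ for $i\in V$.
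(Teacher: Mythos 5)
Your plan follows the paper's route in outline: reduce to the $\lambda=0$ case by the local scaling transformation (Theorem \ref{th:image-measure} / Corollary \ref{cor:expectation-exp-fn}), and handle the Green's function factor by recognizing $e^{-\frac12\sk{\theta,\hat G(u)\theta}}$ as the conditional $s$-Gaussian expectation $\E_{\mu^W}[e^{i\sk{\theta,se^u}}\mid u]$. That part is sound, and your ``order of operations'' is a legitimate permutation of the paper's argument (the paper first replaces the $\hat G$ factor by the conditional Gaussian and only then rescales, but the two operations commute).

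There is, however, a genuine gap in your step (iii). First, a small slip: the $\lambda=\0$ instance of the theorem is
\begin{equation}
\E_{\mu^W}\!\left[e^{\sk{\theta,e^u_{\tilde V}}-\frac12\sk{\theta,\hat G(u)\theta}}\right]=e^{\sk{\theta,1}},
\end{equation}
not $1$; and after completing the $s$-Gaussian this becomes $\E_{\mu^W}[e^{\sk{\theta,e^u(1+is)}}]=e^{\sk{\theta,1}}$. The substantive issue is how to prove this. It does \emph{not} follow from $\E[e^{u_k}]=1$: conditioning on $u$ and integrating out $s$ simply restores the factor $e^{-\frac12\sk{\theta,\hat G\theta}}$, so there is no reduction to a product of expectations of $e^{u_k}$'s. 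Nor can you immediately apply $Q$-supersymmetric localization to the observable $e^{\sk{\theta,x+z+iy}}$, because this observable is \emph{not} $Q$-invariant (it depends on $x,y$ individually, whereas only functions of $z$ satisfy $QF=0$). The missing ingredient is the rotational symmetry of $D\mu^W$ in the $xy$-plane combined with the mean value theorem for harmonic functions: averaging $e^{\sk{\theta,x^\alpha+z+iy^\alpha}}$ over the rotation angle $\alpha$ localizes the harmonic function $\exp$ at the center of the circle, producing $e^{\sk{\theta,z}}$, which \emph{is} $Q$-invariant (since $Qz_i=0$) and only then localizes at the zero field configuration to give $e^{\sk{\theta,1}}$. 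This rotation-plus-harmonicity step is the content of Lemma \ref{le:rot-sym} / Corollary \ref{co:expectation-generating-fnc} and is indispensable; without it your step (iii) has no valid closing argument.
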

The proof is done in Section \ref{se:further-martingales}.
For $\theta=0$ equation \eqref{eq:expectation-G-exp-beta} gives indeed the Laplace 
transform $\cL^W(\lambda)$ of $\beta$. This special case appeared first in 
Proposition 1 of \cite{sabot-tarres-zeng15} in the context of
a random Schr\"odinger operator approach. 
The equivalence of this approach  to $H^{2|2}$ is shown
in Corollary~2  of \cite{sabot-tarres-zeng15}.
In particular the joint distribution of 
the $\beta_i$'s is a marginal of their $\nu^{W,1}$.
Using a local scaling transformation, we will give an alternative derivation 
of the Laplace transform $\cL^W(\lambda)$ in 
Corollary \ref{co:laplace-trafo-beta}.

For any vector $b=(b_i)_{i\in V}\in\R^{V}$, we define 
\begin{align}
\label{eq:def-H-beta}
(H_b)_{ij}= 2b_{i}\delta_{ij}- W_{ij},\quad i,j\in V.
\end{align}
Note that $(H_{\beta(u)})_{ij}=e^{-u_i}A^W_{ij}(u)e^{-u_j}$ for all $i,j\in V$, and hence
\begin{align}
\label{eq:relation-H-beta-hat-G}
H_{\beta(u)}^{-1}=\hat G^{V,W}_{VV}(u)
\end{align}
is the restriction of $\hat G^{V,W}(u)$, defined in \eqref{eq:def-hat-G}, to 
$V\times V$.  

The following result is a consequence of Theorem \ref{thm:expectation-G}.
\footnote{Xiaolin Zeng has told us that G\'erard Letac has proved formula 
\eqref{eq:letac-xiaolin} with an inductive approach using linear algebraic 
methods. Unfortunately, this proof is not published and we don't know it.
We were wondering whether Theorem \ref{thm:expectation-G} is related to 
Letac's formula. Xiaolin Zeng and Christophe Sabot have answered this question 
in the affirmative. Christophe Sabot (private communication) showed that 
Theorem \ref{thm:expectation-G}
can be derived from Letac's formula. Here, we go in the opposite direction 
and deduce Letac's formula from Theorem \ref{thm:expectation-G}. }

\begin{corollary}[Letac's formula]
\label{cor:letac}
For all $\phi,\theta\in(0,\infty)^V$, one has 
\begin{align}
\label{eq:letac-xiaolin}
\int_{\{b\in\R^V: H_b>0\}} 
\frac{e^{-\tfrac12\left(\sk{\phi,H_b\phi}+\sk{\theta,H_b^{-1}\theta}\right)}
}{\sqrt{\det H_b}}\, db
=\left(\frac{\pi}{2}\right)^{\tfrac{|V|}{2}}
\frac{e^{-\sk{\phi,\theta}}}{\prod_{i\in V}\phi_i},
\end{align}
where the notation $H_b>0$ means that $H_b$ is positive definite. 
\end{corollary}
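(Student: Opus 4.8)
The plan is to derive Letac's formula \eqref{eq:letac-xiaolin} from Theorem \ref{thm:expectation-G} by choosing the parameters $\lambda$ and $\theta$ appropriately, then integrating out the scaling field $u$ against the measure $\mu^W$ and recognizing the resulting $b$-integral. Concretely, in \eqref{eq:expectation-G-exp-beta} set the external field parameter (called $\theta$ there) equal to $-\psi$ for a vector $\psi\in(0,\infty)^{\tilde V}$ with $\psi_\delta$ chosen freely (say $\psi_\delta = 0$ to respect the constraint, or absorb it), so that $\sk{\theta,e^u_{\tilde V}}=-\sk{\psi,e^u_{\tilde V}}$ is of the required sign for the theorem to apply. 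The key algebraic move is to re-express everything in terms of the matrix $H_{\beta(u)}$: by \eqref{eq:relation-H-beta-hat-G} we have $\hat G^{V,W}_{VV}(u)=H_{\beta(u)}^{-1}$, and since $\hat G_{ij}(u)=e^{u_i}(A^W_{VV}(u)^{-1})_{ij}e^{u_j}$, the quadratic form $\sk{\theta,\hat G(u)\theta}$ becomes $\sk{\psi,H_{\beta(u)}^{-1}\psi}$ after the substitution — so the exponent on the left of \eqref{eq:expectation-G-exp-beta} already contains the $\sk{\theta,H_b^{-1}\theta}$ piece of Letac's integrand, with $b=\beta(u)$.

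The second step is to perform the change of variables from $u$ (distributed under $\mu^W$, after integrating out $s$) to $b=\beta(u)=(\beta_i(u))_{i\in V}$. Integrating out the $s$-variables in \eqref{eq:def-mu-W} is immediate: $\int e^{-\frac12 s^t A^W(u)s}\prod_{i\in V}\frac{ds_i}{\sqrt{2\pi}} = (\det A^W_{VV}(u))^{-1/2}$, which cancels one factor of $\det A^W_{VV}(u)$ and leaves the marginal density of $u$ proportional to $\sqrt{\det A^W_{VV}(u)}\,\prod_{(i\sim j)\in\tilde E}e^{-W_{ij}[\cosh(u_i-u_j)-1]}\prod_{i\in V}e^{-u_i}$. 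Now I would use the Sabot–Tarrès–Zeng substitution $u\mapsto b$: the map $u_V\mapsto\beta(u)$ (with $u_\delta=0$ fixed) is a diffeomorphism onto $\{b\in\R^V:H_b>0\}$, and the Jacobian computation together with $\det A^W_{VV}(u)=\prod_i e^{2u_i}\det H_{\beta(u)}$ and $\sk{\psi,e^u_{\tilde V}}$, $\sk{\psi,\hat G(u)\psi}$ rewritten in the $b$-variables, transforms the $\mu^W$-expectation into exactly the left-hand side of \eqref{eq:letac-xiaolin}, up to the explicit prefactor $(\pi/2)^{|V|/2}/\prod_i\phi_i$ and with $\phi$ identified from the linear term $\sk{\psi,e^u}$ after the substitution. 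The right-hand side $\cL^W(\lambda)e^{\sk{\theta,\sqrt{1+\lambda}}}$ must then collapse, for the chosen $\lambda$, to $e^{-\sk{\phi,\theta}}$ times the stated constants; I expect the natural choice is $\lambda=0$ (so $\cL^W(0)=1$ and $\sqrt{1+\lambda}=\mathbf 1$), which forces the correspondence $\phi \leftrightarrow \psi$ in a symmetric way and makes $e^{\sk{\theta,\sqrt{1+\lambda}}}=e^{-\sk{\psi,\mathbf 1}}$ match the boundary term $e^{-\sk{\phi,\theta}}$ under the identification dictated by the change of variables.

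The main obstacle will be bookkeeping the change of variables $u\mapsto\beta(u)$ cleanly: tracking the Jacobian, the factor $\prod_i e^{2u_i}$ relating $\det A^W_{VV}(u)$ to $\det H_{\beta(u)}$, the reference-vertex terms $W_{\delta j}e^{u_j}$ hidden in $\beta_i(u)$ for $i\sim\delta$, and ensuring all the $e^{\pm u_i}$ factors (from the measure $\prod e^{-u_i}$, from $\hat G$, and from the substitution) combine to give precisely $1/\prod_i\phi_i$ and $(\pi/2)^{|V|/2}$ with no leftover. A secondary subtlety is justifying that the sign hypotheses of Theorem \ref{thm:expectation-G} ($\theta\le 0$ componentwise, $\lambda\in\Lambda_V$) are met — here $\theta\le0$ is exactly why we set the theorem's external field to $-\psi$ with $\psi>0$, matching Letac's hypothesis $\phi,\theta\in(0,\infty)^V$. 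Once the dictionary between $(\phi,\theta)$ in Letac's formula and $(\psi,\lambda)$ in Theorem \ref{thm:expectation-G} is pinned down, the identity is just a rewriting of \eqref{eq:expectation-G-exp-beta}.
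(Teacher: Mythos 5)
Your high-level plan (start from Theorem \ref{thm:expectation-G}, integrate out $s$, and change variables $u\mapsto b=\beta(u)$ using the Jacobian relation $\det A^W_{VV}(u)=\prod_{i\in V}e^{2u_i}\det H_{\beta(u)}$) is in principle viable; indeed the paper's footnote reports that Sabot proved Theorem \ref{thm:expectation-G} \emph{from} Letac's formula by essentially this route, so running it in reverse is plausible. But the sketch contains two concrete gaps, one of which is a genuine misidentification of parameters.

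First, the dictionary between $(\theta,\lambda)$ of Theorem \ref{thm:expectation-G} and $(\phi,\theta)$ of Letac is not what you claim. After pushing $\mu^W$ forward to $b$ (using $e^u_V=H_b^{-1}W_{V\delta}$), the linear term $-\sk{\psi_V,e^u_V}=-\sk{\psi_V,H_b^{-1}W_{V\delta}}$ is a \emph{cross term in the $H_b^{-1}$ quadratic form}, and the boundary edge at $\delta$ contributes an additional rank-one piece $-\tfrac12\sk{W_{V\delta},H_b^{-1}W_{V\delta}}$; together with $-\tfrac12\sk{\psi_V,H_b^{-1}\psi_V}$ these assemble to $-\tfrac12\sk{W_{V\delta}+\psi_V,\,H_b^{-1}(W_{V\delta}+\psi_V)}$. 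So the Letac variable you obtain is $\theta_{\rm Letac}=W_{V\delta}+\psi_V$, not $\psi_V$, and $\phi$ does \emph{not} come from the linear term $\sk{\psi,e^u}$ at all: it comes from the factor $e^{-\sk{1+\lambda_V,\,b}}$ in the transformed $\mu^W$-density, via $\sk{1+\lambda_V,b}=\tfrac12\sk{\sqrt{1+\lambda},H_b\sqrt{1+\lambda}}+\text{const}$, giving $\phi=\sqrt{1+\lambda}$. In particular your ``natural choice'' $\lambda=0$ yields only the case $\phi=1$, not a ``symmetric'' correspondence $\phi\leftrightarrow\psi$. Second, since $\theta_{\rm Letac}=W_{V\delta}+\psi_V$ carries an unavoidable offset supported at $\ell$, you only reach an affine subset of $(0,\infty)^V$; to get all $\theta\in(0,\infty)^V$ you must either send $W_{\ell\delta}\downarrow0$ with a weak-convergence argument, or invoke analytic continuation in $\theta$. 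The proposal mentions neither, and this is precisely the point where the paper works hardest: the actual proof first reduces to $\phi=1$ by scaling, passes from $\nu^{W,1}$ to $\mu^W$ via a $W_{\ell\delta}\downarrow0$ weak limit (Lemma \ref{lemma-wc}), performs a rank-one Schur-complement decomposition of $\hat G$ across the vertex $\ell$ (Lemma \ref{lemma-Grel}), integrates the $u_\ell$-marginal by a conditional-expectation identity (Lemma \ref{lemma-cond-exp}), and only then applies Theorem \ref{thm:expectation-G} on the \emph{smaller} graph $\G=(V,E)$ --- never carrying out the $u\mapsto b$ change of variables explicitly.
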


To construct the martingale hierarchy, we will also need some  measurability result.
Recall that $\beta^V= (\beta_{i}^V)_{i\in V}$. 

\begin{lemma}[Measurability, Sabot-Tarr\`es-Zeng \cite{sabot-tarres-zeng15}]
\label{le:measurability}
\mbox{}\\
The random vector $(u_{i})_{i\in {\tilde V}}$ is measurable with respect to
 the $\sigma$-field $\sigma (\beta^V)$. 
In other words, there is a measurable function $f_V^W:\R^V\to\R^{\tilde V}$ such that 
\begin{align}
(u_i)_{i\in\tilde V}=f_V^W(\beta^V).
\end{align}
\end{lemma}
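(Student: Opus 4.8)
The plan is to exhibit $f_V^W$ explicitly, by inverting the map $u\mapsto\beta^{V,W}(u)$ through the matrix $H_{\beta(u)}$ of \eqref{eq:def-H-beta}. The starting point is the elementary fact that the discrete Laplacian $A^W(u)$ has vanishing row sums: for each $i\in\tilde V$ one has $\sum_{j\in\tilde V}A^W_{ij}(u)=W_{ii}e^{2u_i}=0$, i.e.\ $A^W(u)\mathbf 1_{\tilde V}=\0$, with $\mathbf 1_{\tilde V}$ the all-ones vector on $\tilde V$. Keeping only the rows indexed by $V$, separating off the $\delta$-th column, and using $u_\delta=0$, this gives
\begin{align*}
\sum_{j\in V}A^W_{ij}(u)=-A^W_{i\delta}(u)=W_{i\delta}e^{u_i},\qquad i\in V,
\end{align*}
that is, $A^W_{VV}(u)\,\mathbf 1_V=(W_{i\delta}e^{u_i})_{i\in V}$, where $\mathbf 1_V$ is the all-ones vector on $V$.

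Next I would feed in the identity $(H_{\beta(u)})_{ij}=e^{-u_i}A^W_{ij}(u)e^{-u_j}$ for $i,j\in V$ recorded just before \eqref{eq:relation-H-beta-hat-G}, i.e.\ $A^W_{VV}(u)=D\,H_{\beta(u)}\,D$ with $D:=\diag(e^{u_i})_{i\in V}$. Substituting into the previous display and cancelling the invertible factor $D$ from both sides yields
\begin{align*}
H_{\beta(u)}\,(e^{u_j})_{j\in V}=w,\qquad\text{where } w:=(W_{i\delta})_{i\in V}\in\R^V.
\end{align*}
Since $\tilde\G$ is connected, $A^W_{VV}(u)$, and hence $H_{\beta(u)}=D^{-1}A^W_{VV}(u)D^{-1}$, is invertible, so $(e^{u_j})_{j\in V}=H_{\beta(u)}^{-1}w$; in particular $u_j=\log\bigl((H_{\beta(u)}^{-1}w)_j\bigr)$ for $j\in V$, while $u_\delta=0$.

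It remains to observe that the right-hand side depends on $\beta^V$ only. The matrix $H_b=(2b_i\delta_{ij}-W_{ij})_{i,j\in V}$ depends polynomially, hence Borel-measurably, on $b\in\R^V$, and $b=\beta^V(u)$ is precisely the vector to be inserted, since $2\beta^{V,W}_i(u)=\sum_{j\in\tilde V}W_{ij}e^{u_j-u_i}=(H_{\beta(u)})_{ii}$. I would therefore define $f_V^W:\R^V\to\R^{\tilde V}$ by $f_V^W(b)_\delta:=0$ and $f_V^W(b)_j:=\log\bigl((H_b^{-1}w)_j\bigr)$ for $j\in V$ on the Borel set $B:=\{b\in\R^V:\ \det H_b\neq0\text{ and }(H_b^{-1}w)_j>0\text{ for all }j\in V\}$, setting $f_V^W:=\0$ off $B$; being a composition of matrix inversion, a linear map and $\log$, this is measurable. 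The computation above shows $\beta^V(u)\in B$ and $f_V^W(\beta^V(u))=(u_i)_{i\in\tilde V}$ for every $u\in U_V$, which proves the lemma — indeed pointwise on $\Omega_V$, not merely $\mu^W$-almost surely.

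I do not expect a real obstacle here: the argument is purely linear-algebraic. The only points requiring a little care are the bookkeeping around the reference vertex $\delta$ (using $u_\delta=0$ and that the $\delta$-row and $\delta$-column are deleted both in $A^W_{VV}$ and in $H_b$) and the measurable extension of $f_V^W$ to the complement of the image $\{\beta^V(u):u\in U_V\}$, neither of which is delicate.
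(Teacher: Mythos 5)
Your proof is correct and takes essentially the same approach as the paper: both arrive at the key identity $H_{\beta(u)}\,e^u_V = W_{V\delta}$, invert $H_\beta$ (whose invertibility follows from positive definiteness of $A^W_{VV}$), and recover $u_V$ by taking logarithms componentwise. The only cosmetic difference is that you derive the identity from the vanishing row sums of $A^W(u)$ together with $A^W_{VV}(u)=D\,H_{\beta(u)}\,D$, whereas the paper obtains it by directly reorganizing the definition \eqref{eq:beta-definition} of $\beta_i$; these are the same computation.
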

Given the equivalence of $H^{2|2}$ and a random Schr\"odinger description 
mentioned above, this lemma  follows from Theorem 3 in  \cite{sabot-tarres-zeng15}.
However, since our starting point is $H^{2|2}$ rather than random Schr\"odinger operators,
we include the proof in Section \ref{subse:measurability} below. 

\subsection{Infinite graph}\label{subse:infinite-graph}

Let $\G_\infty=(V_\infty,E_\infty)$ be an infinite locally finite connected 
undirected graph without direct loops. We approximate $\G_\infty$ by finite 
subgraphs $\G_n=(V_n,E_n)$ such that $V_n\uparrow V_\infty$ and 
$E_n=\{(i\sim j)\in E_\infty:i,j\in V_n\}$. Given $n$, we obtain a new finite
graph $\tilde\G_n=(\tilde V_n,\tilde E_n)$ from $\G_\infty$ by collapsing all 
vertices in $V_\infty\setminus V_n$ to a single vertex $\delta_n$. Thus, 
$\tilde V_n=V_n\cup\{\delta_n\}$ and 
\begin{align}
\tilde E_n= E_n\cup\{(i\sim\delta_n):i\in V_n\text{ and }\exists 
j\in V_\infty\setminus V_n\text{ such that }(i\sim j)\in E_\infty\}. 
\end{align}
In other words, $\tilde\G_n$ is obtained from $\G_n$ introducing wired boundary 
conditions.  
As in Section \ref{subse:finite-graph}, we assign positive weights $W_{ij}=W_{ji}>0$ 
to every undirected edge $(i\sim j)\in E_\infty$ and we set $W_{ij}=0$ for 
$i\not\sim j$. For $i,j\in\tilde V_n$, we define the weight 
$W^{(n)}_{ij}=W^{(n)}_{ji}$ as follows:
\begin{align}
& W^{(n)}_{ij}= W_{ij} \quad\text{ if } i\in V_n\text{ and }j\in V_n, \\
& W^{(n)}_{i\delta_n}=W^{(n)}_{\delta_ni}
=\sum_{j\in V_\infty\setminus V_n} W_{ij} 
\quad\text{ for }i\in V_n, \quad\text{and}\quad W^{(n)}_{\delta_n\delta_n}=0.
\end{align}
In particular, $W^{(n)}_{ij}>0$ if and only if $(i\sim j)\in\tilde E_n$. 

Let $\mu^W_n$ denote the $H^{2|2}$-measure defined in \eqref{eq:def-mu-W} 
for the graph $\tilde \G_n$ and edge weights $W^{(n)}_{ij}$. 
The following observation was made by Sabot and Zeng in \cite{sabot-zeng15}.
To make the presentation self-contained, we will repeat their argument in Section \ref{se:first-martingal}.

\begin{lemma}\emph{\textbf{(Kolmogorov consistency, 
Sabot-Zeng \cite{sabot-zeng15})}}
\label{le:consistency}
For $n\in\N$, the Laplace transform 
$\cL^W_n(\lambda)=\E_{\mu^W_n}[e^{-\sk{\lambda_{V_n},\beta^{V_n}}}]$ 
of $\beta^{V_n}=(\beta_i)_{i\in V_n}$  satisfies the consistency relation 
\begin{align}
\cL^W_n(\lambda)=\cL^W_{n+1}(\lambda),
\end{align}
for all $\lambda\in\Lambda_{V_{n+1}}$ with $\lambda_i=0$ for 
all $i\in \tilde V_{n+1}\setminus V_n$. 
In particular, the law of $\beta^{V_n}$ with respect to $\mu^W_n$ agrees with 
the law of $\beta^{V_{n+1}}|_{V_n}$ with respect to $\mu^W_{n+1}$.
\end{lemma}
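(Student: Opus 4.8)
The plan is to prove the consistency relation $\cL^W_n(\lambda)=\cL^W_{n+1}(\lambda)$ by applying the explicit Laplace transform formula \eqref{eq:Laplace} of Theorem \ref{thm:expectation-G} (in its $\theta=0$ special case) to both $\tilde\G_n$ and $\tilde\G_{n+1}$, and then checking that the two product formulas agree under the stated restriction on $\lambda$. First I would fix $\lambda\in\Lambda_{V_{n+1}}$ with $\lambda_i=0$ for all $i\in\tilde V_{n+1}\setminus V_n$; in particular $\lambda_i=0$ for $i=\delta_{n+1}$ and for every vertex of $V_{n+1}\setminus V_n$. Let $\lambda^{(n)}$ denote the corresponding element of $\Lambda_{V_n}$, namely the restriction of $\lambda$ to $\tilde V_n$ with the understanding that $\lambda^{(n)}_{\delta_n}=0$; since $\lambda$ already vanishes on $V_{n+1}\setminus V_n\supseteq$ (the vertices collapsed into $\delta_n$), this restriction is consistent and $\sqrt{1+\lambda_i}=1$ for all the relevant boundary vertices. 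By Theorem \ref{thm:expectation-G} with $\theta=0$, $\cL^W_n(\lambda)=\cL^{W^{(n)}}(\lambda^{(n)})$ with the product formula \eqref{eq:Laplace} taken over $\tilde E_n$ and $V_n$, and similarly $\cL^W_{n+1}(\lambda)=\cL^{W^{(n+1)}}(\lambda)$ over $\tilde E_{n+1}$ and $V_{n+1}$.

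Next I would compare the two product expressions factor by factor. For the vertex product $\prod_{i\in V}\frac{1}{\sqrt{1+\lambda_i}}$: in the $(n+1)$-case the product is over $V_{n+1}$, but every factor with $i\in V_{n+1}\setminus V_n$ equals $1$ because $\lambda_i=0$ there, so it reduces to the product over $V_n$, matching the $n$-case. For the edge product $\prod_{(i\sim j)}e^{W_{ij}(1-\sqrt{1+\lambda_i}\sqrt{1+\lambda_j})}$ I would split the edges of $\tilde E_{n+1}$ into: (a) edges with both endpoints in $V_n$ — these also lie in $\tilde E_n$ with $W^{(n+1)}_{ij}=W^{(n)}_{ij}=W_{ij}$, so they contribute identically; (b) edges incident to at least one vertex outside $V_n$ (either inside $V_{n+1}\setminus V_n$, or to $\delta_{n+1}$, or from a vertex of $V_n$ to such a vertex) — for all of these at least one endpoint has $\lambda=0$, hence the factor is $e^{W_{ij}(1-1\cdot\sqrt{1+\lambda_j})}$; when $j\in V_n$ this is a genuine contribution, when $j\notin V_n$ it is $e^0=1$. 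The point is that the \emph{sum} of the weights $W^{(n+1)}_{ij}$ over all edges $(i\sim j)$ in group (b) with a fixed endpoint $i\in V_n$ and the other endpoint outside $V_n$ equals exactly $W^{(n)}_{i\delta_n}=\sum_{j\in V_\infty\setminus V_n}W_{ij}$ by construction of the wired weights; since the common factor $(1-\sqrt{1+\lambda_i})$ multiplies each such $W^{(n+1)}_{ij}$ in the exponent, additivity of the exponent gives $\prod_j e^{W^{(n+1)}_{ij}(1-\sqrt{1+\lambda_i})}=e^{W^{(n)}_{i\delta_n}(1-\sqrt{1+\lambda_i})}$, which is precisely the contribution of the edge $(i\sim\delta_n)$ in the $n$-case. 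Summing over $i\in V_n$ and combining with group (a) shows the two edge products coincide.

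Putting the vertex and edge comparisons together yields $\cL^W_{n+1}(\lambda)=\cL^W_n(\lambda)$. The final sentence of the lemma — that the law of $\beta^{V_n}$ under $\mu^W_n$ equals that of $\beta^{V_{n+1}}|_{V_n}$ under $\mu^W_{n+1}$ — then follows because, taking $\lambda_i=0$ for $i\in V_{n+1}\setminus V_n$, the identity $\E_{\mu^W_{n+1}}[e^{-\sk{\lambda_{V_n},\beta^{V_{n+1}}|_{V_n}}}]=\cL^W_{n+1}(\lambda)=\cL^W_n(\lambda)=\E_{\mu^W_n}[e^{-\sk{\lambda_{V_n},\beta^{V_n}}}]$ holds for all $\lambda_{V_n}\in(-1,\infty)^{V_n}$, and a Laplace transform on $(-1,\infty)^{V_n}$ (equivalently on an open set) determines the law of the nonnegative random vector $\beta^{V_n}$ uniquely. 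The main obstacle is purely bookkeeping: correctly tracking which collapsed vertices of $\G_\infty$ are absorbed into $\delta_n$ versus which land in $V_{n+1}\setminus V_n$, and verifying that the additivity-in-the-exponent argument in group (b) exactly reproduces the wired boundary weight $W^{(n)}_{i\delta_n}$; there is no analytic difficulty once Theorem \ref{thm:expectation-G} is granted.
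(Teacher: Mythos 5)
Your proof is correct and follows essentially the same route as the paper: both apply the explicit Laplace-transform product formula (Corollary \ref{co:laplace-trafo-beta}, i.e.\ the $\theta=0$ case of Theorem \ref{thm:expectation-G}) to $\tilde\G_n$ and $\tilde\G_{n+1}$, observe the vertex factors match since $\lambda$ vanishes off $V_n$, and use additivity in the exponent to collapse the edges leaving $V_n$ into the wired boundary weight $W^{(n)}_{i\delta_n}$. The only cosmetic difference is that you merge the paper's Cases 2 and 3 into a single group (b), and you spell out the Laplace-uniqueness argument for the final sentence, which the paper leaves implicit.
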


Consequently, as worked out in \cite{sabot-zeng15}, Kolmogorov's extension 
theorem yields the existence of a coupling $({\pmb{\beta}}_i)_{i\in V_\infty}$ 
on a probability space $(\Omega_\infty,\F_\infty,\mu_\infty^W)$
such that for any $n\in\N$ the laws of the random vectors 
\begin{align}
{\pmb{\beta}}^{(n)}=({\pmb{\beta}}_i:\Omega_\infty\to\R)_{i\in V_n}
\end{align}
with respect to $\mu_\infty^W$ and $\beta^{V_n}:\Omega_{V_n}\to\R^{V_n}$ 
with respect to $\mu_n^W$ coincide; recall the definition \eqref{eq:def-Omega}
of $\Omega_{V_n}$. Following \cite{sabot-zeng15}, we define the $\sigma$-field
\begin{align}
\label{eq:def-filtration}
\F_n=\sigma({\pmb{\beta}}^{(n)})\subseteq\F_\infty.
\end{align}
Using the function $f_V^W$ from Lemma \ref{le:measurability}, we define 
\begin{align}
\label{eq:def-u-n}
u^{(n)}=(u_i^{(n)})_{i\in\tilde V_n}=f_{V_n}^W({\pmb{\beta}}^{(n)}). 
\end{align}
In particular, for all $n$, the law of $u^{(n)}$ with respect to $\mu_\infty^W$ 
coincides with the law of $u=(u_i)_{i\in\tilde V_n}$ with respect to $\mu_n^W$. 
We also define 
\begin{align}
\label{eq:extend-u-n}
u_i^{(n)}=u_{\delta_n}^{(n)}=0\quad\text{ for }i\in V_\infty\setminus V_n.
\end{align} 
In Section \ref{se:first-martingal}, we present an alternative short proof of the 
following first martingale from Proposition 9 in \cite{sabot-zeng15}.  

\begin{theorem}[Martingale $e^u$, Sabot and Zeng \cite{sabot-zeng15}]
\label{th:first-mg}
For any $k\in V_\infty$, the process $(e^{u_k^{(n)}})_{n\in\N}$ is a martingale with 
respect to the filtration $(\F_n)_{n\in\N}$:
\begin{equation}\label{eq:martingal-statement}
\E_{\mu_\infty^W}\left[e^{u_{k}^{(n+1)}}| \F_{n}\right]= e^{u_{k}^{(n)}},\qquad \forall k\in V_{\infty}.
\end{equation}
\end{theorem}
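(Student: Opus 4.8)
The plan is to reduce the martingale identity \eqref{eq:martingal-statement} to a \emph{tilted} version of the Kolmogorov consistency of Lemma \ref{le:consistency}, and to obtain that from a tilted Laplace transform of $\beta$ on a fixed finite graph. First I would carry out the reduction. Fix $k\in V_\infty$ and $n\in\N$, and assume $k\in V_n$ (if $k\notin V_n$ the identity reads $\E_{\mu_\infty^W}[e^{u_k^{(n+1)}}\mid\F_n]=1$, and is treated by the same argument with the factor $\sqrt{1+\lambda_k}$ occurring below replaced by $1$). By \eqref{eq:def-u-n} and Lemma \ref{le:measurability}, $e^{u_k^{(n)}}$ is a measurable function of $\pmb\beta^{(n)}$, hence $\F_n$-measurable; moreover, since the law of $u^{(m)}$ under $\mu_\infty^W$ equals the law of $u$ under $\mu_m^W$ and $\E_{\mu^W}[e^{u_k}]=1$ by \cite{disertori-spencer-zirnbauer2010}, we have $\E_{\mu_\infty^W}[e^{u_k^{(m)}}]=1$ for all $m$, so each $e^{u_k^{(m)}}$ lies in $L^1(\mu_\infty^W)$. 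It therefore suffices to show $\E_{\mu_\infty^W}[e^{u_k^{(n+1)}}\,h(\pmb\beta^{(n)})]=\E_{\mu_\infty^W}[e^{u_k^{(n)}}\,h(\pmb\beta^{(n)})]$ for every bounded measurable $h\colon\R^{V_n}\to\R$. Using $\pmb\beta^{(n)}=\pmb\beta^{(n+1)}|_{V_n}$, the definition \eqref{eq:def-u-n}, Lemma \ref{le:measurability}, and the fact that the law of $\pmb\beta^{(m)}$ under $\mu_\infty^W$ is the law of $\beta^{V_m}$ under $\mu_m^W$, both expectations transport to the finite graphs, and the identity to be proved becomes
\begin{equation}\label{eq:tilted-consistency}
\E_{\mu_{n+1}^W}\!\left[e^{u_k}\,h\big(\beta^{V_{n+1}}|_{V_n}\big)\right]=\E_{\mu_n^W}\!\left[e^{u_k}\,h\big(\beta^{V_n}\big)\right],
\end{equation}
where on the left $u$ denotes the $H^{2|2}$-field on $\tilde\G_{n+1}$ and on the right on $\tilde\G_n$.

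Next I would pass to Laplace transforms. Since $\beta_i>0$ for all $i$ and $\E_{\mu_m^W}[e^{u_k}]=1$, both sides of \eqref{eq:tilted-consistency} are integrals of $h$ against probability measures on $(0,\infty)^{V_n}$, namely the laws of $(\beta_i)_{i\in V_n}$ under $e^{u_k}\,d\mu_{n+1}^W$ and under $e^{u_k}\,d\mu_n^W$; by uniqueness of the multivariate Laplace transform on the positive orthant it is enough to verify \eqref{eq:tilted-consistency} for $h(x)=e^{-\sk{\lambda_V,x}}$ with $\lambda_V\in[0,\infty)^{V_n}$. That is, it is enough to know the tilted Laplace transform
\begin{equation}\label{eq:tilted-laplace}
\E_{\mu^W}\!\left[e^{u_k}\,e^{-\sk{\lambda_V,\beta}}\right]=\cL^W(\lambda)\,\sqrt{1+\lambda_k}\qquad(\lambda\in\Lambda_V)
\end{equation}
on every finite graph $\tilde\G$ with $k\in V$. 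Granting \eqref{eq:tilted-laplace}, apply it on $\tilde\G_{n+1}$ to the extension $\bar\lambda\in\Lambda_{V_{n+1}}$ of $\lambda_V$ by zero off $V_n$, and on $\tilde\G_n$ to $\lambda_V$; using $\sk{\bar\lambda,\beta^{V_{n+1}}}=\sk{\lambda_V,\beta^{V_{n+1}}|_{V_n}}$, $\bar\lambda_k=\lambda_k$, and $\cL^W_{n+1}(\bar\lambda)=\cL^W_n(\lambda_V)$ from Lemma \ref{le:consistency}, the two Laplace transforms coincide, which yields \eqref{eq:tilted-consistency} and hence \eqref{eq:martingal-statement}.

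The main obstacle is \eqref{eq:tilted-laplace}; this is the only point where the $H^{2|2}$ structure genuinely enters. Formally \eqref{eq:tilted-laplace} is the infinitesimal form of Theorem \ref{thm:expectation-G}: taking $\theta\in(-\infty,0]^{\tilde V}$ whose only nonzero entry is $\theta_k$ and differentiating \eqref{eq:expectation-G-exp-beta} at $\theta_k=0$ (legitimate by dominated convergence on $\theta_k\in[-1,0]$, where $e^{\theta_k e^{u_k}-\frac12\theta_k^2\hat G_{kk}(u)}\le1$) yields \eqref{eq:tilted-laplace}. Since Theorem \ref{thm:expectation-G} is established only later, in the actual proof I would instead derive \eqref{eq:tilted-laplace} directly from the image-measure Theorem \ref{th:image-measure} together with $\E_{\mu^W}[e^{u_k}]=1$: the local scaling transformation of Section \ref{subse:local-scaling-transf} converts $\mu^W$ into a measure whose density reproduces the factor $\cL^W(\lambda)e^{-\sk{\lambda_V,\beta}}$, exactly as in the derivation of Corollary \ref{co:laplace-trafo-beta}, while the Radon--Nikodym derivative of that transformation, localized at the single vertex $k$ and normalized by $\E_{\mu^W}[e^{u_k}]=1$, contributes the extra factor $\sqrt{1+\lambda_k}$. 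Carrying this out --- i.e.\ controlling how the $e^{u_k}$-tilt interacts with the scaling transformation and with the collapse of the exterior of $V_n$ to the single vertex $\delta_n$ --- is the real work; by contrast the reductions above are routine once Lemma \ref{le:measurability} and the normalization $\E_{\mu^W}[e^{u_k}]=1$ are in hand.
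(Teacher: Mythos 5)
Your proposal is correct and follows essentially the same route as the paper: reduce the conditional expectation to a test against Laplace-transform functions of $\pmb\beta^{(n)}$, transport to the finite-graph measures $\mu_n^W$ and $\mu_{n+1}^W$, invoke the tilted Laplace transform $\E_{\mu^W}[e^{u_k}e^{-\sk{\lambda_V,\beta}}]=\cL^W(\lambda)\sqrt{1+\lambda_k}$ (which the paper obtains as Example \ref{ex:expectation-exp-u} from the image-measure Theorem \ref{th:image-measure} together with $\E_{\mu^W}[e^{u_k}]=1$), and conclude by the Kolmogorov consistency of Lemma \ref{le:consistency}. The only cosmetic difference is that you split cases at $k\in V_n$ versus $k\notin V_n$, whereas the paper splits at $k\in V_{n+1}$ versus $k\notin V_{n+1}$; both are handled by the same formula with $\lambda_k=0$.
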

This martingale will now be generalized. Recall the definition 
\eqref{eq:def-hat-G} of the Green's function $\hat G$. 
We denote by $\hat G^{(n)}=\hat G^{V_n}(u^{(n)})$ the Green's function 
(on the graph $\tilde\G_n$) obtained by replacing $u$ by $u^{(n)}$.
Let $(-\infty,0]^{(V_\infty)}$ denote the set of all 
$\theta\in(-\infty,0]^{V_\infty}$ having only finitely many non-zero entries.
For these $\theta$ and $n\in\N$, we define 
$\theta^{(n)}\in (-\infty,0]^{\tilde{V}^{(n)}}$ by
\begin{align}
\label{def-delta-n}
\theta^{(n)}_i=\theta_i\quad\text{for }i\in V_n \qquad\text{and}\qquad
\theta^{(n)}_{\delta_n}=\sum_{j\in V_\infty\setminus V_n}\theta_j.
\end{align}

\begin{theorem}[Generating martingale]
\label{th:generating-exponential-martingale}
For all $\theta\in(-\infty,0]^{(V_\infty)}$,
\begin{align}
\label{eq:def-M-n-theta}
M^{(n)}(\theta)=e^{\sk{\theta^{(n)}, e^{u^{(n)}}}-\frac12\sk{\theta^{(n)},\hat G^{(n)}\theta^{(n)}}},
\quad n\in\N,
\end{align}
is a martingale with respect 
to the filtration $(\F_n)_{n\in\N}$ defined in \eqref{eq:def-filtration}.
\end{theorem}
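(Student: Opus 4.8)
The natural strategy is to reduce Theorem~\ref{th:generating-exponential-martingale} to Theorem~\ref{thm:expectation-G} together with the Kolmogorov consistency of Lemma~\ref{le:consistency} and the measurability of Lemma~\ref{le:measurability}. First I would fix $\theta\in(-\infty,0]^{(V_\infty)}$ and $n\in\N$, and observe that by \eqref{eq:def-u-n} all the random quantities appearing in $M^{(n)}(\theta)$, namely $u^{(n)}=f_{V_n}^W({\pmb\beta}^{(n)})$ and $\hat G^{(n)}=\hat G^{V_n}(u^{(n)})$, are $\F_n$-measurable functions of ${\pmb\beta}^{(n)}$; so $M^{(n)}(\theta)$ is adapted. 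To check integrability, note that since $\theta^{(n)}\le 0$ componentwise, $e^{\sk{\theta^{(n)},e^{u^{(n)}}}}\le 1$, while $\hat G^{(n)}$ is positive semidefinite (it is conjugate to the inverse of the positive definite matrix $A^W_{V_nV_n}(u^{(n)})$ on $V_n\times V_n$), so the quadratic form $\sk{\theta^{(n)},\hat G^{(n)}\theta^{(n)}}\ge 0$ and hence $M^{(n)}(\theta)\le 1$; thus $M^{(n)}(\theta)$ is bounded, a fortiori integrable.

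The core of the argument is the conditional-expectation identity $\E_{\mu_\infty^W}[M^{(n+1)}(\theta)\mid\F_n]=M^{(n)}(\theta)$. The plan is to compute the conditional expectation of $M^{(n+1)}(\theta)$ given $\F_n$ by first conditioning on $\F_{n+1}$-measurable data that, loosely speaking, amounts to the field ${\pmb\beta}^{(n+1)}$ restricted to $V_n$ together with the "boundary" values; more precisely, one wants to apply Theorem~\ref{thm:expectation-G} on the graph $\tilde\G_{n+1}$, but with the collapsed vertex $\delta_{n+1}$ replaced by the subgraph structure seen from $V_n$. The right way to organize this is: the law of ${\pmb\beta}^{(n+1)}$ under $\mu_\infty^W$ equals the law of $\beta^{V_{n+1}}$ under $\mu_{n+1}^W$; conditionally on ${\pmb\beta}^{(n)}$, one uses Theorem~\ref{thm:expectation-G} for the finite graph $\tilde\G_{n+1}$ with a suitable choice of $\lambda$ supported on $V_{n+1}\setminus V_n$ together with the exponential-quadratic observable built from $\theta^{(n+1)}$. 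Taking $\theta=\theta^{(n+1)}$ in \eqref{eq:expectation-G-exp-beta}, the left-hand side factorizes off a term that is a function of ${\pmb\beta}^{(n)}$ (hence $\F_n$-measurable) times the expectation over the "new" variables, and the right-hand side, $\cL^{W^{(n+1)}}(\lambda)e^{\sk{\theta^{(n+1)},\sqrt{1+\lambda}}}$, after choosing $\lambda$ appropriately and letting the new variables be integrated out, must collapse to exactly $M^{(n)}(\theta)$. Concretely I expect that with $\lambda\equiv 0$ on $V_{n+1}\setminus V_n$ we recover $\sqrt{1+\lambda}=1$ there, so that $e^{\sk{\theta^{(n+1)},\sqrt{1+\lambda}}}$ restricted to the new vertices contributes $e^{\theta^{(n+1)}_j}$ summed over $j\in V_{n+1}\setminus V_n$, which is precisely what is needed to turn $\theta^{(n+1)}$ on $\tilde\G_{n+1}$ into $\theta^{(n)}$ on $\tilde\G_n$ via \eqref{def-delta-n}; and the product over edges in $\cL^W$ telescopes because edges inside $V_n$ contribute trivially when $\lambda=0$ there.

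The main obstacle, and the step requiring the most care, is the bookkeeping of how the wired boundary condition changes when passing from $\tilde\G_{n+1}$ to $\tilde\G_n$: one must verify that conditioning $\mu_{n+1}^W$ on $\sigma(\beta^{V_{n+1}}|_{V_n})$ and integrating out the remaining degrees of freedom reproduces $\mu_n^W$ together with the correct marginal structure, and that under this operation the collapsed vertex $\delta_{n+1}$ together with the vertices of $V_{n+1}\setminus V_n$ "become" the single collapsed vertex $\delta_n$ in a way compatible with the definitions \eqref{def-delta-n} of $\theta^{(n)}$ and of $W^{(n)}$. Here Lemma~\ref{le:consistency} is the key input — it guarantees precisely that the law of $\beta^{V_n}$ is consistent across $n$ — and Lemma~\ref{le:measurability} guarantees that $u^{(n)}$ (and hence $\hat G^{(n)}$) is a deterministic function of ${\pmb\beta}^{(n)}$, so that "conditioning on $\F_n$" and "conditioning on $u^{(n)}$" coincide. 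A clean way to finish is to apply Theorem~\ref{thm:expectation-G} twice (once on $\tilde\G_{n+1}$ and once on $\tilde\G_n$) with matching observables and divide; the quotient of the two right-hand sides is readily checked to equal $e^{\sk{\theta^{(n)},e^{u^{(n)}}}-\frac12\sk{\theta^{(n)},\hat G^{(n)}\theta^{(n)}}}=M^{(n)}(\theta)$, while the quotient of the left-hand sides is exactly $\E_{\mu_\infty^W}[M^{(n+1)}(\theta)\mid\F_n]$ after recognizing the conditional law via consistency. One should double-check the hypothesis $\theta^{(n)}\in(-\infty,0]^{\tilde V_n}$ needed to invoke Theorem~\ref{thm:expectation-G}: this holds since each $\theta_j\le 0$, so both $\theta^{(n)}_i=\theta_i\le0$ for $i\in V_n$ and $\theta^{(n)}_{\delta_n}=\sum_{j\in V_\infty\setminus V_n}\theta_j\le 0$.
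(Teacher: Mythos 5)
Your proposal assembles exactly the right ingredients and in the right order: reduce to test functions $e^{-\sk{\lambda,{\pmb{\beta}}^{(n)}}}$ by uniqueness of Laplace transforms, transfer to the finite-graph measures $\mu^W_{n+1}$ and $\mu^W_n$, apply Theorem~\ref{thm:expectation-G} on $\tilde\G_{n+1}$ and $\tilde\G_n$ with $\lambda$ extended by $0$ on $\tilde V_{n+1}\setminus V_n$, and close with Lemma~\ref{le:consistency} and the bookkeeping identity $\sk{\theta^{(n+1)},\sqrt{1+\lambda}}=\sk{\theta^{(n)},\sqrt{1+\lambda}}$ coming from \eqref{def-delta-n}. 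This is precisely the paper's route, and your observation that $M^{(n)}(\theta)\le 1$ (so integrability is automatic) is a correct small addition. There is a minor internal slip: you first say ``$\lambda$ supported on $V_{n+1}\setminus V_n$'' and later, correctly, ``$\lambda\equiv 0$ on $V_{n+1}\setminus V_n$''; the test functions must be $\F_n$-measurable, so $\lambda$ lives on $V_n$.

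The one place where the argument as written does not parse is the closing ``divide'' step. When Theorem~\ref{thm:expectation-G} is applied on $\tilde\G_{n+1}$ and on $\tilde\G_n$, both right-hand sides $\cL^W_{n+1}(\lambda)\,e^{\sk{\theta^{(n+1)},\sqrt{1+\lambda}}}$ and $\cL^W_n(\lambda)\,e^{\sk{\theta^{(n)},\sqrt{1+\lambda}}}$ are deterministic numbers; their quotient is a number (in fact $1$ by consistency), and so cannot equal the random variable $M^{(n)}(\theta)$. Likewise, the two left-hand sides are unconditional expectations, and their quotient is not $\E_{\mu^W_\infty}[M^{(n+1)}(\theta)\mid\F_n]$. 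The correct conclusion is not a quotient but an \emph{equality}: for every $\lambda\in\Lambda_{V_n}$ (extended by $0$ outside $V_n$) one has
\begin{align}
\E_{\mu^W_{n+1}}\Bigl[e^{\sk{\theta^{(n+1)},e^{u}}-\tfrac12\sk{\theta^{(n+1)},\hat G^{V_{n+1}}(u)\theta^{(n+1)}}}\,e^{-\sk{\lambda,\beta^{V_{n+1}}}}\Bigr]
&=\cL^W_{n+1}(\lambda)\,e^{\sk{\theta^{(n+1)},\sqrt{1+\lambda}}}\nonumber\\
&=\cL^W_n(\lambda)\,e^{\sk{\theta^{(n)},\sqrt{1+\lambda}}}\nonumber\\
&=\E_{\mu^W_n}\Bigl[e^{\sk{\theta^{(n)},e^{u}}-\tfrac12\sk{\theta^{(n)},\hat G^{V_n}(u)\theta^{(n)}}}\,e^{-\sk{\lambda,\beta^{V_n}}}\Bigr],
\end{align}
and transporting both extremes back to $\mu^W_\infty$ and invoking uniqueness of Laplace transforms yields $\E_{\mu^W_\infty}[M^{(n+1)}(\theta)\,g({\pmb{\beta}}^{(n)})]=\E_{\mu^W_\infty}[M^{(n)}(\theta)\,g({\pmb{\beta}}^{(n)})]$ for all bounded measurable $g$, i.e.\ the martingale property. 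So the fix is to replace ``take the quotient'' by ``observe both expectations agree,'' which is what the paper does.
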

The martingale $M^{(n)}(\theta)$ is the generating function for an infinite 
hierarchy of martingales. The first two martingales 
\eqref{eq:first-mg} and \eqref{eq:second-mg} in this hierarchy are the 
martingales discovered by Sabot and Zeng; see Proposition 9
in \cite{sabot-zeng15}.

In the following, we use the notation $\hat G^{(n)}_{kl}=\hat G^{(n)}_{lk}=0$ for 
$k\in V_\infty\setminus V_n$, $l\in V_\infty$.

\begin{corollary}[Hierarchy of martingales]
\label{cor:further-mgs}
For any $j,k,l\in V_\infty$, 
\begin{align}
\label{eq:first-mg}
& M_j^{(n)}=e^{u_j^{(n)}},\quad n\in\N,
\\
\label{eq:second-mg}
& M_{j,k}^{(n)}=e^{u_j^{(n)}+u_k^{(n)}}-\hat G^{(n)}_{jk},\quad n\in\N,
\qquad\text{ and }\qquad \\
\label{eq:third-mg}
& M_{j,k,l}^{(n)}=e^{u_j^{(n)}+u_k^{(n)}+u_l^{(n)}}
-e^{u_j^{(n)}}\hat G^{(n)}_{kl}
-e^{u_k^{(n)}}\hat G^{(n)}_{jl}
-e^{u_l^{(n)}}\hat G^{(n)}_{jk},
\quad n\in\N, 
\end{align} 
are martingales with respect to the filtration $(\F_n)_{n\in\N}$.
More generally, for any $m\in\N$ and any $i_1,\ldots,i_m\in V_\infty$,
\begin{align}
\label{eq:m-th-mg}
& M_{i_1,\ldots,i_m}^{(n)}=
\sum_{\substack{I\subseteq\{1,\ldots,m\}\\ |I| \text{ even}}}
\sum_{\mathcal I\in \mathcal{P}_2(I)}(-1)^{|I|/2}
\prod_{k\in \{1,\ldots,m\}\setminus I} e^{u^{(n)}_{i_k}}
\prod_{\{k,l\}\in \mathcal{I}} \hat G^{(n)}_{i_ki_l}
,\quad n\in\N
\end{align} 
are martingales with respect to the same filtration,
where $\mathcal{P}_2(I)$ denotes the set of all partitions of $I$
in sets of size $2$. 
\end{corollary}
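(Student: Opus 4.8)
The plan is to derive Corollary \ref{cor:further-mgs} from the generating martingale in Theorem \ref{th:generating-exponential-martingale} by a Taylor expansion in the parameter $\theta$. Concretely, fix finitely many vertices $i_1,\ldots,i_m\in V_\infty$, pick a small parameter vector $t=(t_1,\ldots,t_m)\in(-\infty,0]^m$, and set $\theta=\sum_{r=1}^m t_r e_{i_r}$ (with $e_{i_r}$ the standard basis vector), which lies in $(-\infty,0]^{(V_\infty)}$ as required. Then $M^{(n)}(\theta)$ becomes a function of $t$, and I would expand it as a power series around $t=0$; the claim is that the martingales \eqref{eq:m-th-mg} are, up to combinatorial factors, the coefficients of the monomial $t_1t_2\cdots t_m$ in this expansion. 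Since a convergent (or even just termwise) linear combination of martingales is again a martingale, and since extracting a fixed Taylor coefficient is such a linear operation, each coefficient inherits the martingale property from $M^{(n)}(\theta)$; one only needs a mild integrability/uniform-convergence statement to justify interchanging the conditional expectation $\E_{\mu_\infty^W}[\,\cdot\mid\F_n]$ with the $t$-differentiation.

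The combinatorial heart of the argument is to identify the coefficient of $t_1\cdots t_m$ in $M^{(n)}(\theta)$. Write $\langle\theta^{(n)},e^{u^{(n)}}\rangle=\sum_r t_r e^{u^{(n)}_{i_r}}$ and $\langle\theta^{(n)},\hat G^{(n)}\theta^{(n)}\rangle=\sum_{r,s} t_r t_s \hat G^{(n)}_{i_r i_s}$ (using the convention $\hat G^{(n)}_{kl}=0$ when an index leaves $V_n$, which is exactly why the $\theta^{(n)}_{\delta_n}$ terms drop out of the quadratic form but survive in the linear one — here one should check that replacing $\theta^{(n)}$ by $\theta$ componentwise is legitimate because $e^{u^{(n)}_{\delta_n}}=1$ and the extension conventions \eqref{eq:extend-u-n} make the boundary contributions cancel, or are simply absent). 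Expanding $\exp$, the coefficient of $t_1\cdots t_m$ is obtained by summing over ways to split $\{1,\ldots,m\}$ into a set of "linear" indices contributing factors $e^{u^{(n)}_{i_k}}$ and a complementary set $I$ of even cardinality that is paired up by the quadratic form; each pairing $\{k,l\}$ contributes $-\tfrac12\cdot 2\,\hat G^{(n)}_{i_k i_l}=-\hat G^{(n)}_{i_k i_l}$ after accounting for the $\tfrac12$ in the exponent, the symmetry factor $2$ from $t_rt_s$ vs.\ $t_st_r$, and the $1/(|I|/2)!$ from the exponential series cancelling against the number of orderings of the pairs. This bookkeeping is exactly what produces the formula \eqref{eq:m-th-mg}, with $\mathcal P_2(I)$ the set of perfect matchings of $I$ and the sign $(-1)^{|I|/2}$; the special cases $m=1,2,3$ then read off as \eqref{eq:first-mg}, \eqref{eq:second-mg}, \eqref{eq:third-mg}.

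A cleaner way to organize the same computation, which I would probably adopt to avoid tracking factorials by hand, is to take mixed partial derivatives rather than Taylor coefficients: apply $\partial_{t_1}\cdots\partial_{t_m}$ to $M^{(n)}(t)$ and evaluate at $t=0$. Differentiating $\exp(L(t)-\tfrac12 Q(t))$ where $L$ is linear and $Q$ is quadratic, and using that all higher derivatives of $L$ vanish while $Q$ has constant second derivatives $\partial_{t_r}\partial_{t_s}Q=2\hat G^{(n)}_{i_r i_s}$, one gets a sum over set partitions of $\{1,\ldots,m\}$ into blocks of size $1$ (from $L'$) and size $2$ (from $-\tfrac12 Q''$), with each block of size $1$ at index $k$ giving $L'_k=e^{u^{(n)}_{i_k}}$ and each block $\{k,l\}$ giving $-\hat G^{(n)}_{i_k i_l}$ — which is precisely \eqref{eq:m-th-mg}. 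The martingale property then follows because $M_{i_1,\ldots,i_m}^{(n)}=\partial_{t_1}\cdots\partial_{t_m}M^{(n)}(t)\big|_{t=0}$ is a limit of difference quotients of the martingale $M^{(n)}(\theta)$, and conditional expectation commutes with these limits once dominated convergence applies.

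The main obstacle is this last interchange: justifying that $\E_{\mu_\infty^W}[\partial^m_t M^{(n+1)}(t)\mid\F_n]=\partial^m_t\,\E_{\mu_\infty^W}[M^{(n+1)}(t)\mid\F_n]$. For $\theta\in(-\infty,0]^{(V_\infty)}$ the factor $e^{\langle\theta^{(n)},e^{u^{(n)}}\rangle}$ is bounded by $1$ (since $e^{u^{(n)}_i}>0$ and $\theta^{(n)}_i\le 0$, noting $\theta^{(n)}_{\delta_n}\le 0$ as well), and $-\tfrac12\langle\theta^{(n)},\hat G^{(n)}\theta^{(n)}\rangle\le 0$ because $\hat G^{(n)}$ restricted to $V_n$ equals $H_{\beta^{(n)}}^{-1}$ which is positive definite (cf.\ \eqref{eq:relation-H-beta-hat-G}); hence $0<M^{(n)}(\theta)\le 1$, which already gave Theorem \ref{th:generating-exponential-martingale} its integrability. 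To differentiate, I would restrict $t$ to a small cube $[-\eps,0]^m$, bound the derivatives of $M^{(n+1)}(t)$ in $t$ on this cube by an integrable function — the derivatives are polynomials in the $e^{u^{(n+1)}_{i_r}}$ and $\hat G^{(n+1)}_{i_r i_s}$ times $M^{(n+1)}(t)\le 1$, so it suffices that products of $e^{u^{(n+1)}_{i_r}}$ and entries of $\hat G^{(n+1)}$ have finite $\mu_\infty^W$-expectation, which follows from the $\theta=0$, nonzero-$\lambda$ specializations of Theorem \ref{thm:expectation-G} (or directly, by differentiating \eqref{eq:expectation-G-exp-beta} in $\theta$ at $\theta=0$ with $\lambda=0$, giving finite moments of $e^{u_k}$ and $\hat G_{kl}$) — and then invoke dominated convergence / differentiation under the conditional expectation. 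With that in place the corollary is immediate.
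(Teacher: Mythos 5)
Your proposal is correct and matches the paper's own proof: both take the $m$-fold left partial derivative of the generating martingale $M^{(n)}(\theta)$ at $\theta=0$, justify the interchange with expectation via $\theta_i\le 0$ and finite moments, and identify the resulting expression with \eqref{eq:m-th-mg} via the Isserlis--Wick pairing combinatorics (your "second, cleaner way" is exactly the paper's computation, which cites the Isserlis--Wick formula \eqref{eq:isserlis-wick} in place of the explicit bookkeeping you carry out).
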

Note that the case $I=\emptyset$ corresponds to the term 
$\prod_{k=1}^{m}e^{u^{(n)}_{i_k}}$ in the right-hand side of \eqref{eq:m-th-mg}.

\section{Some tools}\label{se:some-tools}
\subsection{Local scaling transformation}\label{subse:local-scaling-transf}

Fix $\lambda\in\Lambda_V$. We define the local shift 
\begin{align}
\label{eq:def-S-lambda}
S_\lambda:\Omega_V\to\Omega_V, \quad (\tilde u,s)\mapsto (u,s)
\quad\text{with }u_i=\tilde u_i+\log\sqrt{1+\lambda_i}\text{ for all }i\in \tilde V.
\end{align}
In particular, $S_\lambda$ leaves the $s$-variables unchanged and 
$\tilde u_\delta=u_\delta=0$. 
We also introduce the rescaled weights 
\begin{align}
W_{ij}^\lambda=W^{\lambda }_{ji}=\sqrt{1+\lambda_i}\sqrt{1+\lambda_j}W_{ij}.
\end{align}
The following theorem describes a key property of the local scaling transformation
$S_\lambda$. 

\begin{theorem}[Measure transformation]
\label{th:image-measure}
For all $\lambda\in\Lambda_V$, 
the image of $\mu^{W^\lambda}$ with respect to the transformation 
$S_\lambda$ is given by 
\begin{align}
\label{eq:claim-thm-image-measure}
S_\lambda \mu^{W^\lambda} (du\, ds)
= & 
\prod_{(i\sim j)\in \tilde E} e^{W_{ij}^\lambda-W_{ij}}
\cdot \prod_{j\in V} \sqrt{1+\lambda_j} \, e^{-\lambda_j\beta^W_j(u)} 
\,\mu^W(du\, ds)\\
= & 
\prod_{(i\sim j)\in \tilde E} e^{W_{ij}(\sqrt{1+\lambda_i} \sqrt{1+\lambda_j} -1)}
\cdot \prod_{j\in V} \sqrt{1+\lambda_j} \, e^{-\lambda_j\beta^W_j(u)} 
\,\mu^W(du\, ds) .
\nonumber 
\end{align}
\end{theorem}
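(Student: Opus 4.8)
The plan is to compute the image measure $S_\lambda\mu^{W^\lambda}$ directly by the change-of-variables formula, starting from the first of the two equivalent expressions for $\mu^{W^\lambda}$ in \eqref{eq:def-mu-W} (the one with the spanning-tree sum), since the local shift $S_\lambda$ only affects the $u$-variables through additive constants $\log\sqrt{1+\lambda_i}$, and these constants are easy to track in each factor. Writing $u_i=\tilde u_i+\log\sqrt{1+\lambda_i}$, I would substitute into each of the four types of factors appearing in $\mu^{W^\lambda}(du\,ds)$: (i) the edge factors $e^{-W_{ij}^\lambda[\cosh(u_i-u_j)+\frac12(s_i-s_j)^2 e^{u_i+u_j}-1]}$; (ii) the spanning-tree factor $\sum_{T}\prod_{(i\sim j)\in T}W_{ij}^\lambda e^{u_i+u_j}$; (iii) the single-site factor $\prod_{i\in V}e^{-u_i}$; and (iv) the Lebesgue reference measure $\prod_i \frac{du_i\,ds_i}{2\pi}$, which is translation-invariant and hence unchanged.

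\textbf{Key steps.} For factor (i), note that $e^{u_i+u_j}=\sqrt{1+\lambda_i}\sqrt{1+\lambda_j}\,e^{\tilde u_i+\tilde u_j}$, so $W_{ij}^\lambda e^{u_i+u_j}=W_{ij}^\lambda\sqrt{1+\lambda_i}\sqrt{1+\lambda_j}e^{\tilde u_i+\tilde u_j}$ — wait, more carefully: $W_{ij}^\lambda e^{u_i+u_j}=(1+\lambda_i)(1+\lambda_j)W_{ij}e^{\tilde u_i+\tilde u_j}$, which is not simply $W_{ij}e^{\tilde u_i+\tilde u_j}$, so the $(s_i-s_j)^2$ term does not transform cleanly on its own. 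The right way to organize this is instead to expand $\cosh(u_i-u_j)=\cosh(\tilde u_i-\tilde u_j+\log\sqrt{\tfrac{1+\lambda_i}{1+\lambda_j}})$ and similarly handle the other term, but it is cleaner to pass through the $\beta$-variables: recall $\beta_i^W(u)=\tfrac12\sum_{j\sim i}W_{ij}e^{u_j-u_i}$. The crucial algebraic identity, which I would verify first, is that the combined exponent coming from all edge factors plus the single-site factors, after the shift, reorganizes as the corresponding exponent for the weight $W$ (i.e.\ the one defining $\mu^W$) plus a correction $\sum_{(i\sim j)}W_{ij}(\sqrt{1+\lambda_i}\sqrt{1+\lambda_j}-1)$ from the $\cosh-1$ and $+W_{ij}$ terms, together with a remaining linear-in-$W_{ij}e^{u_j-u_i}$ piece that assembles into $-\sum_{j\in V}\lambda_j\beta_j^W(u)$. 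This is where the definition \eqref{eq:beta-definition} of $\beta$ and the observation that $\sqrt{1+\lambda_i/1+\lambda_j}$ terms recombine into $e^{u_j-u_i}$-weighted sums enters. For factor (ii), each edge in a spanning tree contributes $W_{ij}^\lambda e^{u_i+u_j}=(1+\lambda_i)(1+\lambda_j)W_{ij}e^{\tilde u_i+\tilde u_j}$; since every spanning tree of $\tilde\G$ on $|\tilde V|$ vertices has exactly $|\tilde V|-1$ edges and each vertex $i\in V$ has a well-defined degree in the tree, the product $\prod_{(i\sim j)\in T}(1+\lambda_i)(1+\lambda_j)$ depends on $T$ — but combined with factor (iii), $\prod_{i\in V}e^{-u_i}=\prod_{i\in V}\frac{1}{\sqrt{1+\lambda_i}}e^{-\tilde u_i}$, and the key point is that, after also absorbing the $\sqrt{1+\lambda}$ factors that I will have split off from the edge terms in step (i), everything recombines so that the tree sum becomes exactly $\sum_T\prod_{(i\sim j)\in T}W_{ij}e^{\tilde u_i+\tilde u_j}$ times an overall constant $\prod_{j\in V}\sqrt{1+\lambda_j}$. (Equivalently, and perhaps more transparently, one uses the second representation in \eqref{eq:def-mu-W}: $\det A^{W^\lambda}_{VV}(u)=\det\big(D_\lambda A^W_{VV}(\tilde u)D_\lambda\big)$ where $D_\lambda=\diag(\sqrt{1+\lambda_i})_{i\in V}$, so it factors as $\prod_{i\in V}(1+\lambda_i)\cdot\det A^W_{VV}(\tilde u)$ — check this from \eqref{eq:def-A} — and the $\prod(1+\lambda_i)$ cancels against the $\prod e^{-u_i}=\prod(1+\lambda_i)^{-1/2}e^{-\tilde u_i}$ up to the desired leftover $\prod\sqrt{1+\lambda_j}$.)

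\textbf{Main obstacle.} The genuinely delicate bookkeeping step is the reorganization in factor (i): showing that the shifted edge exponent, when expanded, splits precisely into (a) the unshifted $W$-exponent $-W_{ij}[\cosh(\tilde u_i-\tilde u_j)+\tfrac12(s_i-s_j)^2 e^{\tilde u_i+\tilde u_j}-1]$, (b) the constant $+W_{ij}(\sqrt{1+\lambda_i}\sqrt{1+\lambda_j}-1)$, and (c) a term linear in $e^{\tilde u_j-\tilde u_i}$ and $e^{\tilde u_i-\tilde u_j}$ that, summed over all edges at a vertex $j$, reconstitutes $-\lambda_j\beta_j^W(\tilde u)=-\lambda_j\beta_j^W(u)$ (note $\beta$ is scaling-covariant in the right way: $\beta_j^{W^\lambda}(u)$ versus $\beta_j^W(\tilde u)$ must be compared). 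The clean identity underlying this is $\sqrt{1+\lambda_i}\sqrt{1+\lambda_j}\cosh(\tilde u_i-\tilde u_j+\tfrac12\log\tfrac{1+\lambda_i}{1+\lambda_j}) = \tfrac12(1+\lambda_i)e^{\tilde u_i-\tilde u_j}+\tfrac12(1+\lambda_j)e^{\tilde u_j-\tilde u_i}$, which one then compares with $W_{ij}$ times $\cosh(\tilde u_i-\tilde u_j)=\tfrac12 e^{\tilde u_i-\tilde u_j}+\tfrac12 e^{\tilde u_j-\tilde u_i}$; the difference is $\tfrac{\lambda_i}{2}e^{\tilde u_i-\tilde u_j}+\tfrac{\lambda_j}{2}e^{\tilde u_j-\tilde u_i}$ per edge, and summing the $\tfrac{\lambda_j}{2}W_{ij}e^{\tilde u_j-\tilde u_i}$ contributions over all neighbors $j$ of $i$ — after relabeling — gives exactly $\lambda_i\beta_i^W(u)$, accounting for the boundary convention $\lambda_\delta=0$. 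Once these three algebraic facts (the $\cosh$ identity, the determinant scaling, the tree-sum scaling) are in place, the theorem follows by multiplying them together and comparing with \eqref{eq:claim-thm-image-measure}; no measure-theoretic subtlety arises because $S_\lambda$ is a smooth diffeomorphism of $\Omega_V$ with Jacobian $1$.
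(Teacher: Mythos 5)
Your strategy — push forward by the translation $S_\lambda$, track each factor of the density, use the translation invariance of Lebesgue measure — is exactly the paper's. However, two of the specific identities you propose to rely on are wrong as stated, and you would hit a wall when trying to verify them. First, the determinant scaling claim $\det A^{W^\lambda}_{VV}(u)=\det\bigl(D_\lambda A^W_{VV}(\tilde u)D_\lambda\bigr)=\prod_{i\in V}(1+\lambda_i)\det A^W_{VV}(\tilde u)$ is false: conjugating $A^W(\tilde u)$ by $D_\lambda=\diag(\sqrt{1+\lambda_i})$ rescales the off-diagonal entries correctly but does not reproduce the diagonal of $A^{W^\lambda}$, because the diagonal entry at $i$ involves a sum $\sum_k W_{ik}e^{u_i+u_k}$ with $k$-dependent factors. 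The paper's cleaner and correct observation is that there is no scaling factor at all: $W^\lambda_{ij}e^{\tilde u_i+\tilde u_j}=W_{ij}e^{u_i+u_j}$ entry-by-entry gives $A^{W^\lambda}(\tilde u)=A^W(u)$ exactly, so both the Gaussian form $e^{-\frac12 s^tAs}$ and $\det A_{VV}$ are preserved with factor $1$; the $\prod_j\sqrt{1+\lambda_j}$ in the answer then comes entirely from $\prod_{j\in V}e^{-\tilde u_j}=\prod_{j\in V}\sqrt{1+\lambda_j}\,e^{-u_j}$. In your version, an erroneous $\prod(1+\lambda_i)$ from the determinant is made to compensate a mis-transcribed $e^{-u}$-factor; the answer accidentally agrees, but neither intermediate step is correct.

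Second, the identification of the $\beta$-term has a variable mismatch. You compute the per-edge correction as $\tfrac{\lambda_i}{2}e^{\tilde u_i-\tilde u_j}+\tfrac{\lambda_j}{2}e^{\tilde u_j-\tilde u_i}$ and assert that summing reconstitutes $\lambda_j\beta^W_j(\tilde u)=\lambda_j\beta^W_j(u)$; but $\beta^W_j(\tilde u)\neq\beta^W_j(u)$ in general (one differs from the other by factors $\sqrt{(1+\lambda_j)/(1+\lambda_i)}$ on each term). The paper expands the $\cosh$ the other way around,
\begin{align*}
W^\lambda_{ij}\cosh(\tilde u_i-\tilde u_j)=W_{ij}\cosh(u_i-u_j)+\tfrac12 W_{ij}\bigl(\lambda_j e^{u_i-u_j}+\lambda_i e^{u_j-u_i}\bigr),
\end{align*}
and then the correction sums directly to $\sum_{j\in V}\lambda_j\beta^W_j(u)$ with $\beta$ evaluated at the \emph{image} variable $u$ — no covariance identity for $\beta$ is invoked or needed. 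If you re-do the $\cosh$ expansion with the arguments the right way around (expand $\cosh(\tilde u_i-\tilde u_j)$ as a function of $u$, not $\cosh(u_i-u_j)$ as a function of $\tilde u$), the extra terms fall out with the correct indices and the rest of your plan goes through.
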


\paragraph{Remark.}
Note that \eqref{eq:claim-thm-image-measure} gives the general formula for 
$S_\lambda \mu^W$: 
\begin{align}
S_\lambda \mu^{W} (du\, ds)= 
\prod_{(i\sim j)\in \tilde E} e^{W_{ij} - W_{ij}^{\lambda'}}
\cdot \prod_{j\in V} \frac{1}{\sqrt{1+\lambda_j'}}\, e^{\tfrac{\lambda_j'}{1+\lambda_j'} 
\beta^{W^{\lambda'}}_j(u)} 
\,\mu^{W^{\lambda'}}(du\, ds), 
\end{align}
where $\lambda'_j=-\lambda_j/(1+\lambda_j)$ so that 
$\sqrt{1+\lambda_j}\sqrt{1+\lambda_j'}=1$ and 
$W_{ij}^{\lambda'}=W_{ij}/(\sqrt{1+\lambda_i}\sqrt{1+\lambda_j})$.

\medskip\noindent\begin{proof}[Proof of Theorem \ref{th:image-measure}]
Using the definition \eqref{eq:def-mu-W} of $\mu^W$, we find 
\begin{align}
\label{eq:mu-W-lambda}
\mu^{W^\lambda}(d\tilde u\, ds) 
=&\  e^{-\frac12 s^t A^{W^\lambda}(\tilde u) s}\det A^{W^\lambda}_{VV}(\tilde u) 
\prod_{(i\sim j)\in \tilde E} e^{-W^\lambda_{ij}[\cosh(\tilde u_i-\tilde u_j)-1]}
\prod_{i\in V} e^{-\tilde u_i}\, \tfrac{d\tilde u_ids_i}{2\pi}. 
\end{align}
Fix $(u,s)\in \Omega_V$ and set 
$(\tilde u,s)=S_\lambda^{-1}(u,s)=((u_i-\log\sqrt{1+\lambda_i})_{i\in\tilde V},s)\in\Omega_V$ 
as in \eqref{eq:def-S-lambda}. 
From $W^\lambda_{ij} e^{\tilde u_i+\tilde u_j}= W_{ij} e^{u_i+u_j}$ for $i,j\in \tilde V$ 
one has $A^{W^\lambda}(\tilde u)=A^W(u)$ and hence 
\begin{align}
\label{eq:term-with-A}
e^{-\frac12 s^t A^{W^\lambda}(\tilde u) s}\det A^{W^\lambda}_{VV}(\tilde u) 
=e^{-\frac12 s^t A^W(u) s}\det A^W_{VV}(u). 
\end{align}
Again for $i,j\in\tilde V$, we calculate
\begin{align}
W^\lambda_{ij}\cosh(\tilde u_i-\tilde u_j)
& = \tfrac12 W_{ij}\sqrt{1+\lambda_i}\sqrt{1+\lambda_j} 
(e^{\tilde u_i-\tilde u_j}+e^{\tilde u_j-\tilde u_i}) \cr
&= \tfrac12 W_{ij}\sqrt{1+\lambda_i}\sqrt{1+\lambda_j} 
\left(\sqrt{\tfrac{1+\lambda_j}{1+\lambda_i}}e^{u_i-u_j}
+\sqrt{\tfrac{1+\lambda_i}{1+\lambda_j}}e^{ u_j- u_i}\right) \cr
&= \tfrac12 W_{ij} 
\left( (1+\lambda_j) e^{u_i-u_j} + (1+\lambda_i) e^{ u_j- u_i}\right) \cr
&= W_{ij} \cosh(u_i-u_j) + 
\tfrac12 W_{ij} \left(\lambda_j e^{u_i-u_j} + \lambda_i e^{ u_j- u_i}\right).
\end{align}
Summing this over all edges $i\sim j$ and using $W_{ij}=0$ for $i\not\sim j$, we get 
\begin{align}
\sum_{(i\sim j)\in\tilde E} W^\lambda_{ij}\cosh(\tilde u_i-\tilde u_j)
&= \sum_{(i\sim j)\in\tilde E} W_{ij} \cosh(u_i-u_j) + 
\tfrac12 \sum_{(i\sim j)\in\tilde E}
 W_{ij} \left(\lambda_j e^{u_i-u_j} + \lambda_i e^{ u_j- u_i}\right)\cr
&= \sum_{(i\sim j)\in\tilde E} W_{ij} \cosh(u_i-u_j) + 
\tfrac12 \sum_{j\in\tilde V} \lambda_j \sum_{i\in\tilde V} W_{ij} e^{u_i-u_j}  \nonumber\\
&= \sum_{(i\sim j)\in\tilde E} W_{ij} \cosh(u_i-u_j) + 
\sum_{j\in V} \lambda_j \beta_j^W(u),
\end{align}
where in the last line we used $\lambda_\delta=0$. Therefore, 
\begin{align}
\label{eq:last-but-one-step}
\prod_{(i\sim j)\in \tilde E} e^{-W^\lambda_{ij}[\cosh(\tilde u_i-\tilde u_j)-1]}
= \prod_{(i\sim j)\in \tilde E} e^{W_{ij}^\lambda-W_{ij}} 
\prod_{(i\sim j)\in \tilde E} e^{-W_{ij}[\cosh(u_i- u_j)-1]} 
\prod_{j\in V} e^{-\lambda_j \beta_j^W(u)}. 
\end{align}
Finally, 
\begin{align}
\label{eq:last-step}
\prod_{j\in V} e^{-\tilde u_j}
=\prod_{j\in \tilde V} e^{-\tilde u_j}
=\prod_{j\in \tilde V} \sqrt{1+\lambda_j} e^{-u_j},  
\end{align}
where we extended the product $\prod_{j\in V}$ to $\prod_{j\in \tilde V}$
using $\tilde u_\delta=0$. Substituting formulas 
\eqref{eq:term-with-A}, \eqref{eq:last-but-one-step}, and \eqref{eq:last-step}
into \eqref{eq:mu-W-lambda}, claim \eqref{eq:claim-thm-image-measure} follows. 
\end{proof}

\smallskip
The following corollary gives a short alternative
derivation of the Laplace transform of the random vector $(\beta_i)_{i\in V}$.
It is a special case of Theorem \ref{thm:expectation-G}
and also one of the ingredients for the proof of this theorem.

\begin{corollary}[Laplace transform of $\beta$, Sabot-Tarr\`es-Zeng \cite{sabot-tarres-zeng15}]
\label{co:laplace-trafo-beta}
\mbox{}\\
The func\-tion $\cL^W$, defined in formula \eqref{eq:Laplace}, 
is the Laplace transform of the 
random vector $\beta=(\beta_i)_{i\in V}$:
\begin{align}
\label{eq:laplace-trafo-beta}
\E_{\mu^W}\left[ e^{-\sk{\lambda_V,\beta}} \right]
= \cL^W(\lambda)=
\prod_{(i\sim j)\in \tilde E} e^{W_{ij}(1-\sqrt{1+\lambda_i} \sqrt{1+\lambda_j} )}
\prod_{i\in V} \frac{1}{\sqrt{1+\lambda_i}}
\end{align}
for all $\lambda\in\Lambda_V$.
\end{corollary}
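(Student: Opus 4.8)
The plan is to derive Corollary \ref{co:laplace-trafo-beta} directly from Theorem \ref{th:image-measure} by integrating the measure transformation identity over all of $\Omega_V$. Since $S_\lambda$ is a bijection of $\Omega_V$ onto itself, the total mass of the image measure $S_\lambda\mu^{W^\lambda}$ equals the total mass of $\mu^{W^\lambda}$, which is $1$ because $\mu^{W^\lambda}$ is a probability measure (the $H^{2|2}$ measure is normalized for any choice of positive edge weights, in particular for the rescaled weights $W^\lambda_{ij}=\sqrt{1+\lambda_i}\sqrt{1+\lambda_j}W_{ij}$, which are positive since $\lambda\in\Lambda_V$ forces $1+\lambda_i>0$).

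Concretely, I would integrate the right-hand side of \eqref{eq:claim-thm-image-measure} against the constant function $1$:
\begin{align}
1=\int_{\Omega_V}S_\lambda\mu^{W^\lambda}(du\,ds)
=\prod_{(i\sim j)\in\tilde E}e^{W_{ij}(\sqrt{1+\lambda_i}\sqrt{1+\lambda_j}-1)}
\cdot\prod_{j\in V}\sqrt{1+\lambda_j}\cdot\E_{\mu^W}\!\left[e^{-\sk{\lambda_V,\beta^W}}\right].\nonumber
\end{align}
Here I have pulled the deterministic edge factors $\prod_{(i\sim j)\in\tilde E}e^{W_{ij}(\sqrt{1+\lambda_i}\sqrt{1+\lambda_j}-1)}$ and the deterministic vertex factors $\prod_{j\in V}\sqrt{1+\lambda_j}$ out of the integral, since they do not depend on $(u,s)$, leaving exactly $\E_{\mu^W}[\prod_{j\in V}e^{-\lambda_j\beta^W_j(u)}]=\E_{\mu^W}[e^{-\sk{\lambda_V,\beta}}]$ (using that $\beta_\delta$ does not appear and $\sk{\lambda_V,\beta}=\sum_{j\in V}\lambda_j\beta_j$). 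Solving for the expectation gives
\begin{align}
\E_{\mu^W}\!\left[e^{-\sk{\lambda_V,\beta}}\right]
=\prod_{(i\sim j)\in\tilde E}e^{W_{ij}(1-\sqrt{1+\lambda_i}\sqrt{1+\lambda_j})}\prod_{i\in V}\frac{1}{\sqrt{1+\lambda_i}}=\cL^W(\lambda),\nonumber
\end{align}
which is precisely \eqref{eq:laplace-trafo-beta}.

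The only genuine point requiring care — rather than an obstacle — is the normalization claim $\mu^{W^\lambda}(\Omega_V)=1$; this is part of what it means for formula \eqref{eq:def-mu-W} to define a probability measure, established in \cite{disertori-spencer-zirnbauer2010}, and it applies verbatim to the weights $W^\lambda$ because these are again strictly positive edge weights on the same connected graph $\tilde\G$. One should also note that $e^{-\sk{\lambda_V,\beta}}$ is $\mu^W$-integrable: when all $\lambda_i\ge 0$ this is immediate since the integrand is bounded by $1$, and for general $\lambda\in\Lambda_V$ finiteness is exactly what the computation above delivers (the right-hand side is finite), so the manipulation is justified a posteriori; alternatively one invokes Tonelli on the nonnegative integrand. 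No further ingredients are needed — this is a one-line consequence of Theorem \ref{th:image-measure} once the bijectivity of $S_\lambda$ and the normalization of $\mu^{W^\lambda}$ are in hand.
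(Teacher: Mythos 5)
Your proposal is correct and matches the paper's own one-line proof exactly: integrate both sides of Theorem \ref{th:image-measure} over $\Omega_V$ and use that $S_\lambda\mu^{W^\lambda}$ is a probability measure. You have simply spelled out the normalization and integrability checks that the paper leaves implicit.
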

\begin{proof}
Integrating both sides of \eqref{eq:claim-thm-image-measure} over $\Omega_V$, the claim 
follows from the fact that the image measure $S_\lambda \mu^{W^\lambda}$ 
is a probability measure on $\Omega_V$. 
\end{proof}

\smallskip
The following corollary contains the previous one as special case $g=1$:

\begin{corollary}
\label{cor:expectation-exp-fn}
For any random variable $g:\Omega_V\to\R$ and any $\lambda\in\Lambda_V$, one has 
\begin{align}
\label{eq:exp-fn-g}
\E_{\mu^W}\left[ g e^{-\sk{\lambda_V,\beta}} \right]
= \cL^W(\lambda) \E_{\mu^{W^\lambda}}[g\circ S_\lambda]
\end{align}
in the sense that the left-hand side exists if and only if the right-hand 
side exists. 
\end{corollary}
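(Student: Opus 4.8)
The plan is to deduce Corollary~\ref{cor:expectation-exp-fn} directly from the measure transformation in Theorem~\ref{th:image-measure} by a change of variables, essentially repeating the computation of Corollary~\ref{co:laplace-trafo-beta} but carrying an arbitrary integrand $g$ along. First I would rewrite Theorem~\ref{th:image-measure} in the form it will actually be used: for every nonnegative measurable $h:\Omega_V\to[0,\infty]$,
\begin{align}
\label{eq:plan-pushforward}
\int_{\Omega_V} h\, d(S_\lambda\mu^{W^\lambda})
= \prod_{(i\sim j)\in\tilde E} e^{W_{ij}(\sqrt{1+\lambda_i}\sqrt{1+\lambda_j}-1)}
\int_{\Omega_V} h(u,s)\prod_{j\in V}\sqrt{1+\lambda_j}\, e^{-\lambda_j\beta_j^W(u)}\, \mu^W(du\, ds),
\end{align}
which is just \eqref{eq:claim-thm-image-measure} integrated against $h$. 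Next I would invoke the general pushforward identity $\int h\, d(S_\lambda\mu^{W^\lambda}) = \int (h\circ S_\lambda)\, d\mu^{W^\lambda} = \E_{\mu^{W^\lambda}}[h\circ S_\lambda]$, valid for nonnegative measurable $h$.

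Now I would specialize. Given $g:\Omega_V\to\R$, first treat the case $g\ge 0$: take $h=g$ in \eqref{eq:plan-pushforward}. The right-hand side of \eqref{eq:plan-pushforward} is exactly $\E_{\mu^W}\big[g\, \prod_{j\in V}\sqrt{1+\lambda_j}\, e^{-\lambda_j\beta_j^W}\big]$, and since $\prod_{j\in V}\sqrt{1+\lambda_j}$ is a deterministic positive constant and $\prod_{j\in V} e^{-\lambda_j\beta_j^W} = e^{-\sk{\lambda_V,\beta}}$, pulling the constant out and recalling from \eqref{eq:Laplace} that $\cL^W(\lambda) = \prod_{(i\sim j)\in\tilde E} e^{W_{ij}(1-\sqrt{1+\lambda_i}\sqrt{1+\lambda_j})}\prod_{i\in V}(1+\lambda_i)^{-1/2}$, one sees that the product of edge factors times $\prod_{j\in V}\sqrt{1+\lambda_j}$ equals $1/\cL^W(\lambda)$. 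Thus \eqref{eq:plan-pushforward} rearranges to $\E_{\mu^W}[g\, e^{-\sk{\lambda_V,\beta}}] = \cL^W(\lambda)\, \E_{\mu^{W^\lambda}}[g\circ S_\lambda]$, which is \eqref{eq:exp-fn-g} for $g\ge 0$ (both sides in $[0,\infty]$, simultaneously finite or infinite). For general real $g$, I would apply this to $g^+$ and $g^-$ separately and subtract, noting that $\cL^W(\lambda)\in(0,\infty)$ so the left side exists (i.e.\ $\E_{\mu^W}[|g|e^{-\sk{\lambda_V,\beta}}]<\infty$) if and only if the right side does; linearity of $S_\lambda$-composition and of expectation then gives the signed identity.

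There is essentially no hard analytic obstacle here — the content is entirely in Theorem~\ref{th:image-measure}, and what remains is bookkeeping. The one point that deserves a sentence of care is the ``exists if and only if exists'' clause: I would make explicit that the nonnegative version holds with values in $[0,\infty]$ unconditionally (by the pushforward formula and Theorem~\ref{th:image-measure}, both applied to nonnegative functions), so that applying it to $|g|$ shows integrability of $g\, e^{-\sk{\lambda_V,\beta}}$ under $\mu^W$ is equivalent to integrability of $(g\circ S_\lambda)$ under $\mu^{W^\lambda}$, since the two finite-valued expectations differ only by the positive factor $\cL^W(\lambda)$. The claim $g=1$ recovering Corollary~\ref{co:laplace-trafo-beta} is immediate: then $g\circ S_\lambda=1$ and the right-hand side is $\cL^W(\lambda)$, matching \eqref{eq:laplace-trafo-beta}.
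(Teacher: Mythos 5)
Your proof is correct and follows essentially the same route as the paper's: identify the Radon--Nikodym derivative $d(S_\lambda\mu^{W^\lambda})/d\mu^W$ from Theorem~\ref{th:image-measure}, observe via the $\cL^W$ formula that $e^{-\sk{\lambda_V,\beta}} = \cL^W(\lambda)\,d(S_\lambda\mu^{W^\lambda})/d\mu^W$, and apply the change-of-variables/pushforward identity. You spell out the $g\ge 0$ and $g^\pm$ split and the ``exists iff exists'' bookkeeping more explicitly than the paper (which simply cites Corollary~\ref{co:laplace-trafo-beta} and writes the chain of equalities), but the underlying argument is the same.
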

\begin{proof}
Using Corollary \ref{co:laplace-trafo-beta}, we rewrite claim 
\eqref{eq:claim-thm-image-measure} of Theorem \ref{th:image-measure} 
in the form 
\begin{align}
e^{-\sk{\lambda_V,\beta}} 
= \cL^W(\lambda) \frac{d (S_\lambda\mu^{W^\lambda})}{d\mu^W}.
\end{align}
This yields the claim as follows: 
\begin{align}
\E_{\mu^W}\left[ g e^{-\sk{\lambda_V,\beta}} \right]
= & \cL^W(\lambda) \E_{\mu^W}\left[ g \frac{d (S_\lambda\mu^{W^\lambda})}{d\mu^W} \right] \cr
= & \cL^W(\lambda) \E_{S_\lambda\mu^{W^\lambda}}[g]
= \cL^W(\lambda) \E_{\mu^{W^\lambda}}[g\circ S_\lambda]. 
\end{align}
\end{proof}

\begin{example}
\label{ex:expectation-exp-u}
Taking $g(u,s)=e^{u_k}$ for any $k\in\tilde V$, this corollary gives 
\begin{align}
\label{eq:ex-eu}
\E_{\mu^W}\left[ e^{u_k} e^{-\sk{\lambda_V,\beta(u)}} \right]
= \cL^W(\lambda)\sqrt{1+\lambda_k}.
\end{align}
\end{example}
Indeed, using
\begin{align}
g(S_\lambda(u,s))=e^{u_k+\log\sqrt{1+\lambda_k}}
=\sqrt{1+\lambda_k}e^{u_k},
\end{align}
formula \eqref{eq:exp-fn-g} reduces to formula \eqref{eq:ex-eu} as follows
\begin{align}
\E_{\mu^W}\left[ e^{u_k} e^{-\sk{\lambda_V,\beta(u)}} \right]
= \cL^W(\lambda)\sqrt{1+\lambda_k} \E_{\mu^{W^\lambda}}[e^{u_k}]
= \cL^W(\lambda)\sqrt{1+\lambda_k}.
\end{align}
The last equality follows from formula (B.3) in Appendix B of 
\cite{disertori-spencer-zirnbauer2010}, which shows $\E_{\mu^{W^\lambda}} [e^{u_k}]=1$. 
It is also a consequence of Corollary \ref{co:expectation-generating-fnc} below; 
cf.\ formula \eqref{eq:expectation-0}.

\subsection{Measurability}\label{subse:measurability}

\begin{proof}[Proof of Lemma \ref{le:measurability}]
Since $u_{\delta}=0$ we only need to study measurability of $(u_{i})_{i\in V}$.
Given $u\in\R^{\tilde V}$ with $u_\delta=0$, the definition 
\eqref{eq:beta-definition} of $\beta_{i}=\beta_i(u)$ can be reorganized
as
\begin{equation}\label{eq:measur1}
2\beta_{i} e^{u_{i}} - \sum_{j\in V} W_{ij} e^{u_{j}}= W_{i\delta}.
\end{equation}
Recall the definition of $H_b$ given in \eqref{eq:def-H-beta}. In particular, 
\begin{equation}\label{eq:measur2}
(H_{\beta})_{ij}= 2\beta_{i}\delta_{ij}- W_{ij}
=e^{-u_i}A^W_{ij}(u)e^{-u_j},\quad i,j\in V, 
\end{equation}
where the last equality follows from the definition \eqref{eq:beta-definition} of 
$\beta$ and \eqref{eq:def-A} of $A^W$. Since $A^W_{VV}$ is positive definite, 
the matrix $H_\beta$ is invertible. Using the notations $e^u_V=(e^{u_i})_{i\in V}$
and $W_{V\delta}=(W_{i\delta})_{i\in V}$, equation \eqref{eq:measur1}
above becomes $H_{\beta} e^{u}_V= W_{V\delta}$ or equivalently
\begin{align}
\label{eq:exp-u-in-terms-of-H-beta}
e^{u}_V= H_{\beta}^{-1} W_{V\delta}. 
\end{align}
Note that $H_\beta$ is a function
of $(\beta_i)_{i\in V}$ (and the fixed weights $W_{ij}$) only; 
hence it is $\sigma(\beta^V)$-measurable. 
Thus, $e^{u}_V$ is $\sigma(\beta^V)$-measurable. We define $f_V^W:\R^V\to\R^{\tilde V}$ 
on the range of $\beta^V$ by $\beta^V\mapsto u$ with $u_\delta=0$ and 
$u_V=\log (H_{\beta}^{-1} W_{V\delta})$, where the $\log$ is taken componentwise.
We then extend it in an arbitrary measurable way outside of the range of $\beta^V$. 
The claim follows. 
\end{proof}

\paragraph{Remark.} In our setup, starting with $H^{2|2}$, it is a priori clear
that $e^u_V$ has positive entries. As a consequence, $\log (H_{\beta}^{-1} W_{V\delta})$ 
is well-defined. In contrast to this, \cite{sabot-zeng15} starts with the distribution of 
the $\beta$'s. There, additional arguments are needed to insure that this $\log$ 
is indeed well-defined. 

\section{First martingale}
\label{se:first-martingal}

\begin{proof}[Proof of Lemma \ref{le:consistency} -- Kolmogorov consistency]
Using Corollary \ref{co:laplace-trafo-beta}, we can calculate both Laplace transforms:
\begin{align}
\label{eq:mg-expectation1}
\cL^W_{n+1}(\lambda)
= & \prod_{(i\sim j)\in\tilde E_{n+1}} e^{W_{ij}^{(n+1)}(1-\sqrt{1+\lambda_i} \sqrt{1+\lambda_j} )}
\prod_{i\in V_{n+1}} \frac{1}{\sqrt{1+\lambda_i}} ,\\
\label{eq:mg-expectation2}
\cL^W_n(\lambda)
= & \prod_{(i\sim j)\in\tilde E_n} e^{W_{ij}^{(n)}(1-\sqrt{1+\lambda_i} \sqrt{1+\lambda_j} )}
\prod_{i\in V_n} \frac{1}{\sqrt{1+\lambda_i}} . 
\end{align}
Since $\lambda_i=0$ for all $i\in V_{n+1}\setminus V_n$, the last product in 
\eqref{eq:mg-expectation1} agrees with the last product in 
\eqref{eq:mg-expectation2}. It remains to consider the product over edges. 
Let $(i\sim j)\in\tilde E_{n+1}$. We distinguish several cases.\\
{\it Case 1: $i\in V_n$ and $j\in V_n$.} 
Then $(i\sim j)\in\tilde E_n$ and $W_{ij}^{(n+1)}=W_{ij}=W_{ij}^{(n)}$. Thus 
the contribution of this edge is the same in \eqref{eq:mg-expectation1} and 
\eqref{eq:mg-expectation2}. \\
{\it Case 2: $i\in\tilde V_{n+1}\setminus V_n$ and 
$j\in\tilde V_{n+1}\setminus V_n$.} Then 
$W_{ij}^{(n+1)}(1-\sqrt{1+\lambda_i} \sqrt{1+\lambda_j} )=0$ because 
$\lambda_i=\lambda_j=0$. Furthermore, $(i\sim j)\not\in\tilde E_n$. Thus, 
$i\sim j$ does not contribute. \\
{\it Case 3: $i\in V_n$ and $j\in\tilde V_{n+1}\setminus V_n$.} For 
a fixed $i\in V_n$, we calculate 
\begin{align}
&  \sum_{\substack{j\in\tilde V_{n+1}\setminus V_n:\\ (i\sim j)\in\tilde E_{n+1}}}
W_{ij}^{(n+1)}(1-\sqrt{1+\lambda_i} \sqrt{1+\lambda_j} ) 
=  \Bigg[ W_{i\delta_{n+1}}^{(n+1)} + 
\sum_{j\in V_{n+1}\setminus V_n} W_{ij} \Bigg] ( 1-\sqrt{1+\lambda_i})
\cr
&\qquad\qquad 
= \Bigg[ \sum_{j\in V_\infty\setminus V_n} W_{ij} \Bigg] ( 1-\sqrt{1+\lambda_i}) 
= W_{i\delta_n}^{(n)}(1-\sqrt{1+\lambda_i} ). 
\end{align}
This is the contribution of the edge $(i\sim\delta_n)\in\tilde E_n$ to 
\eqref{eq:mg-expectation2}. 

Thus the products in \eqref{eq:mg-expectation1} and 
\eqref{eq:mg-expectation2} agree and the claim is proved.
\end{proof}

\smallskip\noindent 
\begin{proof}[Proof of Theorem \ref{th:first-mg} -- Martingale $e^u$]
Given $n\in\N$, it suffices to consider $k\in V_{n+1}$, since otherwise 
$u_{k}^{(n+1)}=u_{k}^{(n)}=0$ 
and \eqref{eq:martingal-statement} is trivially satisfied. Note that 
$u_k^{(n)}=u_{\delta_n}^{(n)}=0$ for $k\in V_{n+1}\setminus V_n$. 
By its definition \eqref{eq:def-u-n}, $u_k^{(n)}$ is $\F_n$-measurable.
It remains to prove 
\begin{align}
\label{eq:teilbeh-mg}
\E_{\mu_{\infty}^W}\left[e^{u_{k}^{(n+1)}} g ({\pmb{\beta}}^{(n)})\right]=
\E_{\mu_{\infty}^W} \left[e^{u_{k}^{(n)}} g ({\pmb{\beta}}^{(n)})\right]
\end{align}
for any measurable function $g:\R^{V_n}\to[0,\infty)$. 
For any given $c\in\R$, the uniqueness theorem for Laplace transforms allows us to 
restrict the claim to test functions 
$g({\pmb{\beta}}^{(n)})=\prod_{j\in V_n} e^{-\lambda_j{\pmb{\beta}}_j(u)}$ with 
$\lambda_j>c$ for all $j\in V_n$
\begin{align}
\label{eq:equivalent-claim1}
\E_{\mu_{\infty}^W}\left[e^{u_{k}^{(n+1)}} \prod_{j\in V_n} e^{-\lambda_j{\pmb{\beta}}_j} \right]=
\E_{\mu_{\infty}^W} \left[e^{u_{k}^{(n)}} \prod_{j\in V_n} e^{-\lambda_j{\pmb{\beta}}_j} \right], 
\end{align}
as long as all these expectations are finite. 
As explained below formula \eqref{eq:def-u-n}, the law of 
$u^{(n)}$ with respect to $\mu_\infty^W$ 
coincides with the law of $u=(u_i)_{i\in\tilde V_n}$ with respect to $\mu_n^W$. 
In analogy to \eqref{eq:extend-u-n}, we define 
$u':\Omega_{V_n}\to\R^{V_{n+1}}$ by 
\begin{align}
u_{k}'=\left\{ \begin{array}{ll}
u_{k}, &\text{ if } k\in V_{n},\\
u_{\delta_n}=0, &\text{ if } k\in V_{n+1}\setminus V_{n}.
\end{array} \right.
\end{align}
Then, claim \eqref{eq:equivalent-claim1} is equivalent to 
\begin{align}
\label{eq:expectation-mg1}
\E_{\mu^W_{n+1}}\Big[ e^{u_k} \prod_{j\in V_n} e^{-\lambda_j\beta_j^{V_{n+1}}(u)} \Big]
=\E_{\mu^W_n}\Big[ e^{u_k'} \prod_{j\in V_n} e^{-\lambda_j\beta_j^{V_n}(u)} \Big].
\end{align}
For $c=-1$, Corollary \ref{cor:expectation-exp-fn} and Example 
\ref{ex:expectation-exp-u} imply that these expectations are 
finite; hence the same is true for the expectations in \eqref{eq:equivalent-claim1}.

Set $\lambda_i=0$ for all $i\in\tilde V_{n+1}\setminus V_n$. 
Using Example \ref{ex:expectation-exp-u} and Lemma \ref{le:consistency}, 
we obtain the claim 
\eqref{eq:expectation-mg1} in both cases, $k\in V_n$ or $k\in V_{n+1}\setminus V_n$, 
as follows: 
\begin{align}
\E_{\mu^W_{n+1}}\Big[ e^{u_k} \prod_{j\in V_n} e^{-\lambda_j\beta_j^{V_{n+1}}(u)} \Big]
= & \E_{\mu^W_{n+1}}\Big[ e^{u_k} \prod_{j\in V_{n+1}} e^{-\lambda_j\beta_j^{V_{n+1}}(u)} \Big]
=\cL^W_{n+1}(\lambda)\sqrt{1+\lambda_k}\cr
=& \cL^W_n(\lambda)\sqrt{1+\lambda_k}
=\E_{\mu^W_n}\Big[ e^{u_k'} \prod_{j\in V_n} e^{-\lambda_j\beta_j^{V_n}(u)} \Big]. 
\end{align}
\end{proof}

\section{Using (super-)symmetries of the model}
\label{se:susy}

Let $\tilde\G=(\tilde V,\tilde E)$ be a finite graph as described at the beginning of Section 
\ref{se:def-main-results}. 
Disertori, Spencer, and Zirnbauer \cite{disertori-spencer-zirnbauer2010}
use an alternative representation in terms of Grassmann variables of the 
$H^{2|2}$ measure $\mu^W$ defined in \eqref{eq:def-mu-W}. It has the advantage
of making the internal symmetries and supersymmetries of the model visible. 
Since we are using these symmetries in the remainder, we briefly review 
this alternative representation; cf.\ Section 2.2 of 
\cite{disertori-spencer-zirnbauer2010}. 
Let $\overline\psi_i,\psi_i$, $i\in V$, be independent 
Grassmann variables, and let $\overline\psi_\delta=0=\psi_\delta$. The measure
$\mu^W$ can be represented as follows
\begin{align}
\mu^W(du\, ds) 
= & \prod_{i\in V}e^{-u_i} \frac{du_ids_i}{2\pi}
\partial_{\overline{\psi}_i}\partial_{\psi_i} e^{\s}
\end{align}
with the action $\s=\s(u,s,\overline\psi,\psi)$ given by 
\begin{align}
\s=-\sum_{(i\sim j)\in \tilde E} W_{ij}[\cosh(u_i-u_j)-1 +[\tfrac12(s_i-s_j)^2+
(\overline{\psi}_i-\overline{\psi}_j)(\psi_i-\psi_j)]e^{u_i+u_j}].
\end{align}
Thus, $\mu^W$ is the marginal of the superintegration form 
\begin{align}
D\mu^W = & \prod_{i\in V}e^{-u_i} \frac{du_ids_i}{2\pi}
\partial_{\overline{\psi}_i}\partial_{\psi_i} \circ e^{\s}
\end{align}
obtained by integrating the Grassmann variables out. 
Note that since $\overline\psi$ and $\psi$ are nilpotent, 
$e^{\s}$ can be written as a polynomial in these variables whose coefficients
are integrable functions of $u$ and $s$. 

The internal 
(super-)symmetries of $D\mu^W$ are most easily seen in cartesian 
coordinates $x=(x_i)_{i\in\tilde V}$, $y=(y_i)_{i\in\tilde V}$, 
$z=(z_i)_{i\in\tilde V}$, $\xi=(\xi_i)_{i\in\tilde V}$, and 
$\eta=(\eta_i)_{i\in\tilde V}$ defined by
\begin{align}
\label{eq:change-of-coord1}
& x_i=\sinh u_i-\left(\frac12s_i^2+\overline\psi_i\psi_i\right)e^{u_i}, \quad
y_i=s_ie^{u_i}, \quad
\xi_i=e^{u_i}\overline\psi_i, \quad
\eta_i=e^{u_i}\psi_i, \\
\label{eq:change-of-coord2}
& z_i=\sqrt{1+x^2_i+y^2_i+2\xi_i\eta_i}=\cosh u_i+\left(\frac12s_i^2+\overline\psi_i\psi_i\right)e^{u_i}.
\end{align}
In particular, $x_\delta=y_\delta=\xi_\delta=\eta_\delta=0$ and $z_\delta=1$.  
As described in sections 2.1 and 2.2 of \cite{disertori-spencer-zirnbauer2010},
the image of $D\mu^W$ under this supertransformation is given by formulas
(2.5) and (2.6) of that paper: 
\begin{align}
D\mu^W= \Big( \prod_{i\in V}
\frac{dx_idy_i}{2\pi}\, \partial_{\xi_i}\partial_{\eta_i}\circ
\frac{1}{z_i} \Big) e^{\s}
\end{align}
with the transformed action $\s=\s(x,y,\xi,\eta)$ given by
\begin{align}
\s=-\sum_{i,j\in V}W_{ij}(z_iz_j-(1+x_ix_j+y_iy_j+\xi_i\eta_j-\eta_i\xi_j))
-\sum_{i\in V}W_{i\delta}(z_i-1).
\end{align}
Note that when taking the Taylor expansion of $e^{\s}\cdot\prod_{i\in V}\frac{1}{z_i}$ 
in the new Grassmann variables,  
the coefficients are now functions of $x$ and $y$ which decay exponentially fast 
as $\|(x,y)\|\to\infty$.  
For any superfunction $F(s,u,\psi,\overline{\psi})$, we define 
$\E_{D\mu^W}[F]:=\int D\mu^W\, F$, whenever the integral exists. This is 
equivalent to require that the highest order term for the Taylor 
expansion of $e^{\s}\prod_{i\in V}e^{-u_i}F$ in the Grassmann variables is
an integrable function of $u$ and $s$. 
In the following we will consider only functions with enough regularity 
such that integrability holds also in the new coordinate system $x,y,\xi,\eta$. 

\paragraph{Rotational symmetry.} 
Using this representation, it is obvious that $D\mu^W$ is invariant with 
respect to rotations in the $xy$-plane, 
$(x,y,\xi,\eta)\mapsto (x^\alpha,y^\alpha,\xi,\eta)$ with 
\begin{align}
\label{eq:rot}
x^\alpha=x\cos\alpha - y\sin \alpha, \quad
y^\alpha=x\sin\alpha+y\cos\alpha, \quad\text{for }\alpha\in\R.
\end{align}
In horospherical coordinates $u,s,\overline\psi,\psi$ this symmetry is not 
so easy to describe and somehow hidden. 

\paragraph{$Q$-supersymmetry.} 
In \cite{disertori-spencer-zirnbauer2010}, the invariance of the $H^{2|2}$-model
with respect to the supersymmetry operator 
\begin{align}
Q= \sum_{i\in V}
(x_i\partial_{\eta_i}-y_i\partial_{\xi_i}+\xi_i\partial_{x_i}+\eta_i\partial_{y_i})
\end{align}
played a key role. In particular, Proposition 2 in Appendix C of 
\cite{disertori-spencer-zirnbauer2010} states that for any 
smooth superfunction $F=F(u,s,\overline\psi,\psi)$ with $QF=0$ and 
$e^{\s}F$ being integrable, one has 
\begin{align}
\label{eq:integral-Q-symm}
\E_{D\mu^W}[ F]= e^{\s(o)}F(o)=F(o), 
\end{align}
where $o$ denotes the zero-field configuration $u=s=0$, $\overline\psi=\psi=0$.
In particular, the assumption $QF=0$ is satisfied for smooth superfunctions 
of the form $F=F(z)$ because of $Qz_i=0$ for all $i\in V$. 

These (super-)symmetries play the key role in the proof of the 
following lemma. 

\begin{lemma}[Ward identities]
\label{le:rot-sym}
Let $f:\R^2\to\C$ be a harmonic function and $\theta\in\R^{\tilde V}$. 
We assume 
that any coefficient of the Taylor expansion in $\xi,\eta$ of the superfunction 
$f(\sk{\theta, x+z},\sk{\theta, y})e^{\s}$ decays at least exponentially fast 
in $\|(x,y)\|$ at infinity. Then, the following identity holds 
\begin{align}
\E_{D\mu^W}[f(\sk{\theta, x+z},\sk{\theta, y})]=f(\sk{\theta,1},0),
\end{align}
where $\sk{\theta,1}$ stands for $\sum_{i\in \tilde V} \theta_i$.
\end{lemma}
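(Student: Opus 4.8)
\textbf{Proof plan.} The plan is to exploit in succession the two internal symmetries of $D\mu^W$ recalled above. First I would use the rotational invariance in the $xy$-plane to rewrite the expectation as the expectation of an angular average; the mean value property for harmonic functions then collapses that average onto a superfunction depending on the variables $z_i$ alone. Then I would invoke the $Q$-supersymmetry localization \eqref{eq:integral-Q-symm}, which evaluates the expectation of any $Q$-closed superfunction at the zero field $o$, where all $z_i$ equal $1$.

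In detail: since $z_i=\sqrt{1+x_i^2+y_i^2+2\xi_i\eta_i}$ is invariant under the rotation \eqref{eq:rot} and $D\mu^W$ is rotation invariant, one has $\E_{D\mu^W}[f(\sk{\theta,x+z},\sk{\theta,y})]=\E_{D\mu^W}[f(\sk{\theta,x^\alpha+z},\sk{\theta,y^\alpha})]$ for every $\alpha\in\R$. Averaging over $\alpha\in[0,2\pi)$ and interchanging the expectation with the $\alpha$-integral yields
\[
\E_{D\mu^W}[f(\sk{\theta,x+z},\sk{\theta,y})]
=\E_{D\mu^W}\Big[\tfrac{1}{2\pi}\int_0^{2\pi}f(\sk{\theta,x^\alpha+z},\sk{\theta,y^\alpha})\,d\alpha\Big].
\]
To evaluate the inner integral I would split $z_i=\hat z_i+\xi_i\eta_i/\hat z_i$ with body $\hat z_i=\sqrt{1+x_i^2+y_i^2}$, set $\hat c=\sum_i\theta_i\hat z_i$ and $\nu=\sum_i\theta_i\xi_i\eta_i/\hat z_i$ (nilpotent and rotation invariant), and observe that $\sk{\theta,x^\alpha+z}=\hat c+\nu+a\cos\alpha-b\sin\alpha$ and $\sk{\theta,y^\alpha}=a\sin\alpha+b\cos\alpha$ with $a=\sk{\theta,x}$, $b=\sk{\theta,y}$. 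As $\alpha$ runs over $[0,2\pi)$ the body of the argument traces the circle of radius $\sqrt{a^2+b^2}$ centered at $(\hat c,0)$. Expanding $f(\cdot+\nu,\cdot)$ into its finite Taylor series $\sum_{k}\nu^k\partial_1^k f/k!$, each coefficient $\partial_1^k f$ is still harmonic, so the mean value theorem applied term by term shows that the inner integral equals $\sum_k\nu^k\partial_1^k f(\hat c,0)/k!=f(\hat c+\nu,0)=f(\sk{\theta,z},0)$. Hence $\E_{D\mu^W}[f(\sk{\theta,x+z},\sk{\theta,y})]=\E_{D\mu^W}[f(\sk{\theta,z},0)]$.

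For the last step, $f(\sk{\theta,z},0)$ is a smooth superfunction of the variables $(z_i)_{i\in V}$ only, and since $Qz_i=0$ and $Q$ is a derivation, $Q\,f(\sk{\theta,z},0)=0$. By \eqref{eq:integral-Q-symm}, $\E_{D\mu^W}[f(\sk{\theta,z},0)]=f(\sk{\theta,z},0)\big|_o$; at $o$ we have $z_i=1$ for all $i\in\tilde V$ (including $z_\delta=1$), so $\sk{\theta,z}\big|_o=\sum_{i\in\tilde V}\theta_i=\sk{\theta,1}$, which yields the claimed value $f(\sk{\theta,1},0)$.

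The main obstacle will be the integrability bookkeeping needed to justify the Fubini interchange and the applicability of \eqref{eq:integral-Q-symm}. Here one uses that $e^{\s}$ is itself rotation invariant, so each Grassmann coefficient of $f(\sk{\theta,x^\alpha+z},\sk{\theta,y^\alpha})e^{\s}$ is the corresponding coefficient of $f(\sk{\theta,x+z},\sk{\theta,y})e^{\s}$ composed with a rotation, hence decays exponentially in $\|(x,y)\|$ uniformly in $\alpha$; this both legitimates the interchange and shows that $f(\sk{\theta,z},0)e^{\s}$, being the $\alpha$-average of these, is integrable. The remaining care is the nilpotent expansion, which terminates since there are finitely many Grassmann variables.
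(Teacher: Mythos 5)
Your proof is correct and follows essentially the same route as the paper: rotational invariance in the $xy$-plane plus the mean-value theorem localizes the angular average at $f(\sk{\theta,z},0)$, and $Q$-supersymmetric localization via \eqref{eq:integral-Q-symm} then evaluates this at the zero-field configuration. The only difference is that you spell out the nilpotent split $z_i=\hat z_i+\xi_i\eta_i/\hat z_i$ and the term-by-term application of the mean value theorem, a step the paper treats implicitly; this is a worthwhile clarification but not a change of strategy.
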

Note that the extension of $f$ to a superfunction is used in the
expectation because $z$ defined in \eqref{eq:change-of-coord2} involves 
Grassmann variables. This extension is denoted by the same symbol $f$. 

\smallskip\noindent
\begin{proof}
By rotational symmetry of the model $H^{2|2}$ in the $xy$-plane,
using the notation \eqref{eq:rot}, we have 
\begin{align}
\E_{D\mu^W}[f(\sk{\theta, x+z},\sk{\theta, y})]=
\E_{D\mu^W}[f(\sk{\theta, x^\alpha +z},
\sk{\theta, y^\alpha})]
\end{align}
for any $\alpha\in\R$. Taking the average over $\alpha\in[0,2\pi]$
and using the mean value theorem for the harmonic function $f$ yields
\begin{align}
\E_{D\mu^W}[f(\sk{\theta, x+z},\sk{\theta, y})]
& =
\frac{1}{2\pi}\int_0^{2\pi}
\E_{D\mu^W}[f(\sk{\theta, x^\alpha+z},
\sk{\theta, y^\alpha})]\,d\alpha
\cr&
=
\E_{D\mu^W}\left[\frac{1}{2\pi}\int_0^{2\pi}f(\sk{\theta, x^\alpha+z},
\sk{\theta, y^\alpha})\,d\alpha\right]
\cr&=
\E_{D\mu^W}\left[f(\sk{\theta, z},0)\right].
\end{align}
Since $f(\sk{\theta, z},0)$ is a smooth superfunction of $z$, we have 
the supersymmetry 
\begin{align}
Qf(\sk{\theta, z},0)=0. 
\end{align}
The assumption on exponential decay in the lemma implies that we can 
apply Proposition~2 from Appendix C of 
\cite{disertori-spencer-zirnbauer2010}, cited in \eqref{eq:integral-Q-symm}, 
to the averaged superfunction $f(\sk{\theta, z},0)$. It yields 
\begin{align}
\E_{D\mu^W}\left[f(\sk{\theta, z},0)\right] =f(\sk{\theta, 1},0).
\end{align}
\end{proof}

\begin{corollary}[Ward identity for $\exp$]
\label{co:expectation-generating-fnc}
For all $\theta\in(-\infty,0]^{\tilde V}$, one has 
\begin{align}
\label{eq:expectation-generating-fnc}
\E_{\mu^W}[e^{\sk{\theta, e^u(1+is)}}]
&=
e^{\sk{\theta, 1}},
\end{align}
using the abbreviation $e^u(1+is)=(e^{u_j}(1+is_j))_{j\in\tilde V}$.
\end{corollary}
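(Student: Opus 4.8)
The plan is to derive Corollary \ref{co:expectation-generating-fnc} as a direct application of the Ward identity in Lemma \ref{le:rot-sym}, for a suitable choice of harmonic function $f$. The key observation is that the expression $e^{u_j}(1+is_j)$ can be rewritten using the cartesian coordinates from \eqref{eq:change-of-coord1}--\eqref{eq:change-of-coord2}. Indeed, $x_j+z_j=\sinh u_j+\cosh u_j + (\text{terms that cancel}) = e^{u_j}$ on the bosonic part, and more precisely $x_j+z_j = e^{u_j}$ as a superfunction once the $\frac12 s_j^2+\overline\psi_j\psi_j$ contributions to $x_j$ and $z_j$ cancel; meanwhile $y_j=s_je^{u_j}$. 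Hence $(x_j+z_j)+iy_j = e^{u_j}(1+is_j)$, so that $\sk{\theta,e^u(1+is)} = \sk{\theta,x+z} + i\sk{\theta,y}$.

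First I would take $f:\R^2\to\C$ (extended to $\C^2$) to be $f(a,b)=e^{a+ib}$, which is harmonic on $\R^2$ since $\partial_a^2 f + \partial_b^2 f = f - f = 0$. Then $f(\sk{\theta,x+z},\sk{\theta,y}) = e^{\sk{\theta,x+z}+i\sk{\theta,y}} = e^{\sk{\theta,e^u(1+is)}}$, matching the integrand. Lemma \ref{le:rot-sym} would then give
\begin{align}
\E_{D\mu^W}\big[e^{\sk{\theta,e^u(1+is)}}\big] = f(\sk{\theta,1},0) = e^{\sk{\theta,1}},
\end{align}
and since the integrand does not involve the Grassmann variables, $\E_{D\mu^W}$ reduces to $\E_{\mu^W}$, which is exactly \eqref{eq:expectation-generating-fnc}.

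The main obstacle is verifying the exponential-decay hypothesis of Lemma \ref{le:rot-sym}: one needs that every coefficient in the $\xi,\eta$-Taylor expansion of $f(\sk{\theta,x+z},\sk{\theta,y})e^{\s}$ decays at least exponentially in $\|(x,y)\|$. Here the condition $\theta\in(-\infty,0]^{\tilde V}$ is essential. Since $z_i\ge 1$ and $x_i+z_i = e^{u_i}\ge \text{(something comparable to } z_i)$ for the bosonic part (more precisely $x_i + z_i \ge$ a positive multiple of $(1+x_i^2+y_i^2)^{1/2}$ when $x_i+z_i>0$, which holds because $z_i^2 = 1+x_i^2+y_i^2$ on the bosonic locus so $x_i+z_i = e^{u_i}>0$), the quantity $\re\sk{\theta,x+z} = \sk{\theta,x+z}$ is bounded above by a nonpositive quantity that controls growth, giving $|f| = e^{\sk{\theta,x+z}}\le 1$; combined with the already-known exponential decay of the coefficients of $e^{\s}$ in the cartesian coordinates (noted in the excerpt just before the rotational-symmetry paragraph), the hypothesis follows. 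I would also briefly check that $x_j+z_j$ equals $e^{u_j}$ as a superfunction — i.e. that the $(\frac12 s_j^2+\overline\psi_j\psi_j)e^{u_j}$ terms in $x_j$ (with a minus) and $z_j$ (with a plus) cancel exactly — which is immediate from \eqref{eq:change-of-coord1}--\eqref{eq:change-of-coord2}, and that $f$ extended to the superfunction via its power series agrees with $e^{\sk{\theta,e^u(1+is)}}$, using nilpotency of the Grassmann variables so the series terminates.
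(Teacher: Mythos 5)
Your proposal is correct and follows essentially the same route as the paper: identify $x_j+z_j=e^{u_j}$ and $y_j=s_je^{u_j}$, apply Lemma \ref{le:rot-sym} to the harmonic function $f(a,b)=e^{a+ib}$, and use $\theta\le 0$ together with the superexponential decay of $e^{\s}$ to justify the decay hypothesis. Your extra verification of the cancellation in $x_j+z_j$ and the bound $|f|\le 1$ (which only needs $x_i+z_i>0$, not a lower bound proportional to $z_i$) matches the spirit of the paper's brief justification.
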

\begin{proof}
From  \eqref{eq:change-of-coord1} and \eqref{eq:change-of-coord2},
we know $x_j+z_j=e^{u_j}$ and $y_j=s_je^{u_j}$,
and hence
\begin{align}
\E_{\mu^W}[e^{\sk{\theta, e^u(1+is)}}]=\E_{D\mu^W}[e^{\sk{\theta, x+z+iy}}].
\end{align}
We apply now Lemma \ref{le:rot-sym} to the holomorphic (and hence harmonic) 
function $f=\exp:\R^2=\C\to\C$, $f(x,y)=e^{x+iy}$. 
Note that the assumption $\theta_i\le 0$ and the superexponentially 
fast decay of $e^{\s}$ imply that the exponential decay condition 
is satisfied. We obtain
\begin{align}
\E_{D\mu^W}[e^{\sk{\theta, x+z+iy}}]=e^{\sk{\theta,1}},
\end{align}
which proves the claim \eqref{eq:expectation-generating-fnc}.
\end{proof}

\paragraph{Remark.}
As a consequence of Corollary \ref{co:expectation-generating-fnc}, 
we obtain for all vertices $k,l,m\in \tilde V$, 
\begin{align}
\label{eq:expectation-0}
& \E_{\mu^W}[e^{u_k}]=1, \\
& \E_{\mu^W}[e^{u_k+u_l}(1-s_ks_l)]=1, 
\label{eq:expectation-1}\\
\label{eq:expectation-2}
& \E_{\mu^W}[e^{u_k+u_l+u_m}(1-s_ks_l-s_ks_m-s_ls_m)]=1.
\end{align}
More generally, for any $m\in\N$ and any $i_1,\ldots, i_m\in\tilde{V}$,
\begin{align}
\label{eq:expectation-m}
\E_{\mu^W}\left[e^{\sum_{j=1}^m u_{i_j}}
\sum_{\substack{I\subseteq \{1,\ldots,m\}:\\|I|\text{ even}}}(-1)^{|I|/2}
\prod_{k\in I}s_{i_k}\right]=1.
\end{align}
Indeed, given $m\in\N$ and $i_1,\ldots, i_m\in\tilde{V}_n$,
we take the left derivative $\partial_{\theta_{i_1}}\ldots \partial_{\theta_{i_m}}$
at $\theta=0$ of \eqref{eq:expectation-generating-fnc} to
get
\begin{align}
\E_{\mu^W}\left[\prod_{k=1}^me^{u_{i_k}}(1+is_{i_k})\right]=1.
\end{align}
Note that 
the hypothesis $\theta_i\le 0$ allows us to interchange expectations 
and partial derivatives.
Expanding the product and taking
the real part of this equation gives formula
\eqref{eq:expectation-m}. The cases $m=1,2,3$ of this formula 
may be written in the form 
\eqref{eq:expectation-0}, \eqref{eq:expectation-1},
and \eqref{eq:expectation-2}, respectively. 

Using $u_\delta=s_\delta=0$, note that the $(m+1)$-st instance of formula 
\eqref{eq:expectation-m} contains the $m$-th instance as 
special case $i_{m+1}=\delta$.

\section{A hierarchy of martingales}
\label{se:further-martingales}

For a finite graph $\tilde\G=(\tilde V,\tilde E)$ with $\delta\in\tilde V$, 
recall the definitions \eqref{eq:def-A} of the matrix $A^W$ 
and \eqref{eq:def-hat-G} of the Green's function $\hat G$. 
We remind that the Gaussian part in the measure $\mu^W$ defined in 
\eqref{eq:def-mu-W} can be rewritten as 
\begin{align}
\label{eq:Gaussian-with-A}
\prod_{(i\sim j)\in \tilde E} e^{-\frac12 W_{ij}(s_i-s_j)^2e^{u_i+u_j}}
= e^{-\frac12 s^t A^W(u) s}. 
\end{align}
Therefore, we have the following representations
of the Green's function as conditional expectation:  
\begin{align}
\label{eq:formula-hat-G-as-cond-expect}
\hat G_{ij}&=\E_{\mu^W}[s_is_je^{u_i+u_j}|u]
\quad\mu^W\text{-a.s.},\quad\text{for all }i,j\in \tilde V,\\
\label{eq:formula-Gaussian-hat-G-as-cond-Laplace}
e^{-\frac12\sk{\theta,\hat G\theta}}&=\E_{\mu^W}[e^{i\sk{\theta,se^u}}|u]
\quad\mu^W\text{-a.s.}, \quad \text{ for any }\theta\in\R^{\tilde V}.
\end{align}
Note that $s_is_je^{u_i+u_j}\in L^p(\Omega_V,\mu^W)$
implies $\hat G_{ij}\in L^p(\Omega_V,\mu^W)$ for all $p\in[1,\infty)$. 

To prove Theorem \ref{th:generating-exponential-martingale}, we need some 
preliminary results. 
Since the martingale $M_n(\theta)$ in that theorem involves the 
Green's function and we use the preceding representation as a 
conditional expectation, we need the following variant of 
Corollary \ref{cor:expectation-exp-fn}.

\begin{lemma}
\label{le:expectation-exp-cond-exp}
For any random variable $g:\Omega_V\to\R$ and any $\lambda\in\Lambda_V$,
one has 
\begin{align}
\label{eq:exp-fn-g-cond-exp}
\E_{\mu^W}\left[ \E_{\mu^W}[g|u]\, e^{-\sk{\lambda_V,\beta}} \right]
= \cL^W(\lambda) \E_{\mu^{W^\lambda}}[g\circ S_\lambda]
\end{align}
in the sense that the left-hand side exists if and only if the right-hand 
side exists. 
\end{lemma}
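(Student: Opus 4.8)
The plan is to reduce the claim to Corollary \ref{cor:expectation-exp-fn}, which we already have. The key observation is that the local scaling transformation $S_\lambda$ acts only on the $u$-variables (shifting $u_i$ by $\log\sqrt{1+\lambda_i}$) and leaves the $s$-variables untouched; moreover $\beta = \beta^W(u)$ is a function of $u$ alone. So first I would write, using the tower property and the fact that $\E_{\mu^W}[\,\cdot\,|u]$ only averages over $s$ (and the Grassmann variables, which are already integrated out in $\mu^W$),
\begin{align}
\E_{\mu^W}\left[ \E_{\mu^W}[g|u]\, e^{-\sk{\lambda_V,\beta}} \right]
= \E_{\mu^W}\left[ h(u)\, e^{-\sk{\lambda_V,\beta}} \right],
\end{align}
where $h(u) := \E_{\mu^W}[g|u]$ is a ($\sigma(u)$-measurable) random variable on $\Omega_V$. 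Now apply Corollary \ref{cor:expectation-exp-fn} with the function $h$ in place of $g$: this gives
\begin{align}
\E_{\mu^W}\left[ h(u)\, e^{-\sk{\lambda_V,\beta}} \right]
= \cL^W(\lambda)\, \E_{\mu^{W^\lambda}}[h\circ S_\lambda],
\end{align}
in the sense that one side exists iff the other does.

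The remaining step is to identify $\E_{\mu^{W^\lambda}}[h\circ S_\lambda]$ with $\E_{\mu^{W^\lambda}}[g\circ S_\lambda]$. For this I would again use the tower property, now under $\mu^{W^\lambda}$: since $S_\lambda$ is a deterministic bijection of $\Omega_V$ that maps $\sigma(u)$ to $\sigma(u)$ (it is a relabeling of the $u$-coordinates that does not mix in $s$), we have
\begin{align}
\E_{\mu^{W^\lambda}}[g\circ S_\lambda \mid u] = \big(\E_{\mu^W}[g\mid u]\big)\circ S_\lambda = h\circ S_\lambda
\quad \mu^{W^\lambda}\text{-a.s.}
\end{align}
Here one must check that conditioning the pushed-forward problem is compatible: concretely, that the conditional law of $s$ given $u$ under $\mu^{W^\lambda}$, transported by $S_\lambda$, matches the conditional law of $s$ given $u$ under $\mu^W$. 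This follows from the explicit Gaussian form: from \eqref{eq:Gaussian-with-A} and the identity $A^{W^\lambda}(\tilde u) = A^W(u)$ established in the proof of Theorem \ref{th:image-measure} (with $u = S_\lambda \tilde u$), the $s$-conditional density under $\mu^{W^\lambda}$ at $\tilde u$ is exactly the $s$-conditional density under $\mu^W$ at $u$, because $S_\lambda$ leaves $s$ fixed and the matrix governing the Gaussian is unchanged. Taking $\mu^{W^\lambda}$-expectations of the displayed conditional-expectation identity then yields $\E_{\mu^{W^\lambda}}[g\circ S_\lambda] = \E_{\mu^{W^\lambda}}[h\circ S_\lambda]$, and combining with the two displays above gives \eqref{eq:exp-fn-g-cond-exp}.

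The main obstacle is the bookkeeping around existence of the various (possibly signed or unbounded) expectations, since $g$ is an arbitrary real random variable: I would handle this by first treating $g \ge 0$, where all conditional expectations and the tower property are unambiguous and the claimed equivalence of existence is immediate from Corollary \ref{cor:expectation-exp-fn} applied to $h \ge 0$; then splitting a general $g = g^+ - g^-$ and noting that, because $e^{-\sk{\lambda_V,\beta}} > 0$ and $S_\lambda$ is a bijection, the left side of \eqref{eq:exp-fn-g-cond-exp} is (absolutely) well-defined iff both pieces are, iff (by the $g\ge 0$ case) both $\E_{\mu^{W^\lambda}}[g^\pm\circ S_\lambda]$ are finite, iff the right side is well-defined; on that common domain the identity follows by linearity. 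A secondary point worth a sentence is the measurability of $h = \E_{\mu^W}[g|u]$ and of $h\circ S_\lambda$, which is routine since $S_\lambda$ is a homeomorphism of $\Omega_V$.
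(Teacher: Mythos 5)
Your proof is correct, but it takes a noticeably longer route than the paper's one-line argument. The paper simply observes that $\beta$ (and hence $e^{-\sk{\lambda_V,\beta}}$) is $\sigma(u)$-measurable, so
\begin{align*}
\E_{\mu^W}\!\left[\E_{\mu^W}[g\,|\,u]\,e^{-\sk{\lambda_V,\beta}}\right]
=\E_{\mu^W}\!\left[\E_{\mu^W}\!\left[g\,e^{-\sk{\lambda_V,\beta}}\,\big|\,u\right]\right]
=\E_{\mu^W}\!\left[g\,e^{-\sk{\lambda_V,\beta}}\right],
\end{align*}
and the result is then just Corollary \ref{cor:expectation-exp-fn} applied to $g$ itself. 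You instead apply Corollary \ref{cor:expectation-exp-fn} to $h=\E_{\mu^W}[g|u]$, which forces you to prove the auxiliary identity $\E_{\mu^{W^\lambda}}[h\circ S_\lambda]=\E_{\mu^{W^\lambda}}[g\circ S_\lambda]$. You handle this correctly by observing that $S_\lambda$ fixes $s$ and that $A^{W^\lambda}(\tilde u)=A^W(u)$ for $u=S_\lambda^{(1)}(\tilde u)$ (as shown in the proof of Theorem \ref{th:image-measure}), so that the Gaussian conditional law of $s$ given $u$ is intertwined by $S_\lambda$. This is a genuine extra step whose content is precisely what the paper's reordering of the tower property avoids. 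Your treatment of the existence/measurability caveats ($g\ge 0$ first, then $g=g^+-g^-$) is fine but is likewise work the shorter route makes unnecessary, since the equivalence of existence is already built into Corollary \ref{cor:expectation-exp-fn}. In short: correct, but you proved a small auxiliary lemma (invariance of the $s$-conditional law under $S_\lambda$) that the paper's phrasing sidesteps by pulling the $\sigma(u)$-measurable factor inside the conditional expectation before invoking the corollary.
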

\begin{proof}
This follows immediately from Corollary \ref{cor:expectation-exp-fn}
because $\beta$ is a function of $u$, but not of $s$.
\end{proof}

>From this lemma we get immediately the proof of Theorem \ref{thm:expectation-G}.
\smallskip

\noindent
\begin{proof}[Proof of Theorem \ref{thm:expectation-G}]
Using the conditional Laplace transform 
\eqref{eq:formula-Gaussian-hat-G-as-cond-Laplace}
and the fact that $\beta$ is a function of $u$ only, we can rewrite
the claim with the function $g_\theta(u,s)=e^{\sk{\theta, e^u(1+is)}}$ as follows:
\begin{align}
\E_{\mu^W}\left[ \E_{\mu^W}[g_\theta|u]\, e^{-\sk{\lambda_V,\beta}} \right]
= \cL^W(\lambda) e^{\sk{\theta,\sqrt{1+\lambda}}}.
\end{align}
We apply Lemma \ref{le:expectation-exp-cond-exp} to the left-hand side. 
Observe that 
\begin{align}
g_\theta(S_\lambda(u,s))
= & e^{\sk{\theta, \sqrt{1+\lambda}e^u(1+is)}}= 
g_{\theta\sqrt{1+\lambda}}(u,s).
\end{align}
Since $\E_{\mu^{W^\lambda}}[g_{\theta\sqrt{1+\lambda}}]=e^{\sk{\theta,\sqrt{1+\lambda}}}$ 
by Corollary \ref{co:expectation-generating-fnc}, the claim follows. 
\end{proof}

\smallskip
With these tools we can now prove the main result of this section. 

\smallskip\noindent
\begin{proof}[Proof of Theorem \ref{th:generating-exponential-martingale} --
Generating martingale]
The proof follows the same lines as the proof of Theorem \ref{th:first-mg}.
Recall that $\hat G^{(n)}$ is a function of $u^{(n)}$. Consequently, by 
Lemma \ref{le:measurability}, $\hat G^{(n)}$ and hence $M^{(n)}(\theta)$ are 
$\F_n$-measurable. To prove the martingale property, it suffices to show 
\begin{align}
\label{eq:expectation-mg2}
\E_{\mu^W_\infty}\Big[ M^{(n+1)}(\theta) \prod_{j\in V_n} e^{-\lambda_j{\pmb{\beta}_j}}\Big]
=\E_{\mu^W_\infty}\Big[ M^{(n)}(\theta) \prod_{j\in V_n} e^{-\lambda_j{\pmb{\beta}_j}} \Big]
\end{align}
for all $\lambda_i>-1$, $i\in V_n$.
Recall that by the construction in Section \ref{subse:infinite-graph}
the law of $\pmb{\beta}^{(n)}$ with respect to $\mu_\infty^W$ 
coincides with the law of $\beta^{V_n}$ with respect to $\mu_n^W$. Hence, 
we rewrite the claim \eqref{eq:expectation-mg2} in the form 
\begin{align}
\label{eq:expectation-mg3}
\E_{\mu^W_{n+1}}\Big[ \tilde M^{(n+1)}(\theta) \prod_{j\in V_n} 
e^{-\lambda_j\beta_j^{V_{n+1}}(u)} \Big]
=\E_{\mu^W_n}\Big[ \tilde M^{(n)}(\theta)
\prod_{j\in V_n} e^{-\lambda_j\beta_j^{V_n}(u)} \Big],
\end{align}
with the following variant of $M^{(n)}(\theta)$  
\begin{align}
\tilde M^{(n)}(\theta):\Omega_{V_n}\to\R, \quad 
\tilde M^{(n)}(\theta)=e^{\sk{\theta^{(n)}, e^u}-\frac12\sk{\theta^{(n)},\hat G^{V_n}(u)\theta^{(n)}}}.
\end{align}
Compare \eqref{eq:expectation-mg3} with the similar claim 
\eqref{eq:expectation-mg1}. Set $\lambda_i=0$ for 
$i\in\tilde V_{n+1}\setminus V_n$. Using Theorem \ref{thm:expectation-G}, 
claim \eqref{eq:expectation-mg3} is equivalent to 
\begin{align}
\label{eq:mg-property-general-case}
\cL^W_{n+1}(\lambda) e^{\sk{\theta^{(n+1)},\sqrt{1+\lambda}}}
=\cL^W_n(\lambda) e^{\sk{\theta^{(n)},\sqrt{1+\lambda}}}. 
\end{align}
By Lemma \ref{le:consistency}, one has $\cL^W_{n+1}(\lambda)=\cL^W_n(\lambda)$.
We calculate the remaining factors using the definition \eqref{def-delta-n}
of $\theta^{(i)}$, $i\in\{n,n+1\}$:
\begin{align}
\sk{\theta^{(n)},\sqrt{1+\lambda}}
= & \sum_{i\in V_n}\theta_i \sqrt{1+\lambda_i} + \theta_{\delta_n}^{(n)}
= \sum_{i\in V_n}\theta_i \sqrt{1+\lambda_i} + \sum_{j\in V_\infty\setminus V_n}\theta_j, 
\\
\sk{\theta^{(n+1)},\sqrt{1+\lambda}} 
= & \sum_{i\in V_{n+1}}\theta_i \sqrt{1+\lambda_i} 
+ \sum_{j\in V_\infty\setminus V_{n+1}}\theta_j 
= \sk{\theta^{(n)},\sqrt{1+\lambda}}, 
\end{align}
where in the last step we use $\lambda_{i}=0$ for $i\in\tilde V_{n+1}\setminus V_n$.
Thus, equality \eqref{eq:mg-property-general-case} holds and  
the martingale property is shown.
\end{proof}

\smallskip

\smallskip\noindent
\begin{proof}[Proof of Corollary \ref{cor:further-mgs} -- Hierarchy of martingales]
The random variable $M^{(n)}_{i_1,\ldots,i_m}$ is $\F_n$-measurable as a function of 
$u^{(n)}$ and $\hat G^{(n)}$. 
The martingale property for $M^{(n)}_{i_1,\ldots,i_m}$ is obtained
by expanding the corresponding property for $M^{(n)}(\theta)$ from
Theorem \ref{th:generating-exponential-martingale} around $\theta=0$,
as follows. We rewrite the martingale property for $M^{(n)}(\theta)$ 
in the following form: 
\begin{align}
\label{eq:mg-property-exp-theta}
\E_{\mu^W_\infty}\left[ M^{(n+1)}(\theta) 1_A({\pmb{\beta}}^{(n)}) \right]
=\E_{\mu^W_\infty}\left[ M^{(n)}(\theta) 1_A({\pmb{\beta}}^{(n)}) \right]
\end{align}
for any $A\in\B(\R^{V_n})$, $n\in\N$, $\theta\in(-\infty,0]^{(V_\infty)}$,
using the notation ${\pmb{\beta}}^{(n)}=({\pmb{\beta}}_i)_{i\in V_n}$ again. We take 
$m$ (left) partial derivatives of this equation with respect to $\theta$; note that 
the hypothesis $\theta_i\le 0$ and the fact that all moments of $\hat G^{(n)}$
are finite allow us to interchange expectations 
and partial derivatives. This yields
\begin{align}
\label{eq:mg-property-diff}
\E_{\mu^W_\infty}\left[ \partial_{\theta_{i_1}}\ldots\partial_{\theta_{i_m}} 
M^{(n+1)}(\theta) 1_A({\pmb{\beta}}^{(n)}) \right]
=\E_{\mu^W_\infty}\left[ \partial_{\theta_{i_1}}\ldots\partial_{\theta_{i_m}}
M^{(n)}(\theta) 1_A({\pmb{\beta}}^{(n)}) \right].
\end{align}
We use the well-known Isserlis-Wick-formula for $I\subseteq\{1,\ldots,m\}$
in the form 
\begin{align}
\label{eq:isserlis-wick}
\left. \left(\prod_{i\in I}\partial_{\theta_{j_i}}\right)
e^{-\frac12\sk{\theta^{(n)},\hat G^{(n)}\theta^{(n)}}} \right|_{\theta=0}
=\sum_{\I\in\mathcal{P}_2(I)} \prod_{\{k,l\}\in\I} (-\hat G^{(n)}_{k,l}).
\end{align}
The sum on the right-hand side is empty for 
sets $I$ with odd cardinality. Taking the iterated derivative of 
$M_n(\theta)$ as defined in \eqref{eq:def-M-n-theta}, using
the Leibniz rule and \eqref{eq:isserlis-wick}, we get
\begin{align}
& \left.
\partial_{\theta_{i_1}}\ldots\partial_{\theta_{i_m}}M^{(n)}(\theta)\right|_{\theta=0} 
\cr
& = \sum_{\substack{I\subseteq\{1,\ldots,m\}\\ |I| \text{ even}}}
e^{\sum_{k\in \{1,\ldots,m\}\setminus I} u^{(n)}_{i_k}}
\left. \left(\prod_{i\in I}\partial_{\theta_{j_i}}\right)
e^{-\frac12\sk{\theta^{(n)},\hat G^{(n)}\theta^{(n)}}} \right|_{\theta=0} 
= M_{i_1,\ldots,i_m}^{(n)}
.
\end{align}
Inserting this and the corresponding identity for $M^{(n+1)}(\theta)$ 
into \eqref{eq:mg-property-diff}
yields the martingale property for $M_{i_1,\ldots,i_m}^{(n)}$, $n\in\N$, in the form
\begin{align}
\label{eq:mg-property-mk}
\E_{\mu^W_\infty}\left[ M_{i_1,\ldots,i_m}^{(n+1)} 1_A({\pmb{\beta}}^{(n)}) \right]
=\E_{\mu^W_\infty}\left[ M_{i_1,\ldots,i_m}^{(n)} 1_A({\pmb{\beta}}^{(n)}) \right].
\end{align}
\end{proof}

\section{Proof of Letac's formula}
\label{se:letac}

\subsection{Special case $\phi =1$}
\label{se:proof-letac-special-case}
We consider first the simpler case
 $\phi=1$, i.e.\ 
$\phi_i=1$ for all $i\in V$. We will see later that the general
case follows by a scaling argument.

It is shown in Theorem 1 in \cite{sabot-tarres-zeng15} that the 
following  is a probability measure on $\R^V$:
\begin{align}
\nu(d\beta)=\nu^{W,1}(d\beta)
=1_{\{H_\beta>0\}}\left(\frac{2}{\pi}\right)^{\tfrac{|V|}{2}}
e^{-\sk{1,\beta}}\prod_{(i\sim j)\in E}e^{W_{ij}}\frac{1}{\sqrt{\det H_\beta}}\, d\beta.
\end{align}
Using the measure $\nu$, we obtain the relation 
\begin{align}
L:= & \left(\frac{2}{\pi}\right)^{\tfrac{|V|}{2}}\int_{\{b\in\R^V: H_b>0\}} 
\frac{e^{-\tfrac12\left(\sk{1,H_b 1}+\sk{\theta,H_b^{-1}\theta}\right)}
}{\sqrt{\det H_b}}\, db \cr 
= & \E_\nu\left[e^{\sk{1,\beta}
-\tfrac12\left(\sk{1,H_\beta1}+\sk{\theta,H_\beta^{-1}\theta}\right)}\right]
\prod_{(i\sim j)\in E}e^{-W_{ij}} 
=  \E_\nu\left[e^{-\tfrac12\sk{\theta,H_\beta^{-1}\theta}}\right],
\end{align}
where, in the last equality, we have used 
\begin{align}
\sk{1,\beta}-\tfrac12\sk{1,H_\beta1}-\sum_{(i\sim j)\in E}W_{ij}
= \tfrac12\sum_{i,j\in V}W_{ij}-\sum_{(i\sim j)\in E}W_{ij}=0.
\end{align}
The problem then reduces to evaluate $\E_{\nu}[e^{-\tfrac12\sk{\theta,H_\beta^{-1}\theta}}]$.
This is done in three steps.

\paragraph{Step 1.} 
Let  $\law_\nu(\beta)$ denote the 
law of $\beta=(\beta_i)_{i\in V}$ with respect to $\nu$. 
In  Corollary~2 of \cite{sabot-tarres-zeng15},  Sabot, Tarr{\`e}s, and Zeng
express $\law_\nu(\beta)$ using $\beta$ defined in analogy to 
\eqref{eq:beta-definition} and an additional independent
gamma distributed random variable, associated to a special vertex inside $V$. 

In contrast to this, here we consider again the enlarged vertex set
$\tilde V=V\cup\{\delta\}$ and the $H^{2|2}$
measure $\mu^W$, defined in \eqref{eq:def-mu-W}, on the  enlarged graph $(\tilde{V},\tilde{E})$.
We  may  assume the vertex $\delta\in\tilde V\setminus V$ is connected to a {\it single} vertex 
$\ell\in V$,
\begin{align}
\label{eq:def-E}
\tilde{E}=E\cup \{\ell\sim\delta\}, \quad 
E=\tilde E\setminus\{\ell\sim\delta\}.
\end{align}
We will prove below the following relation.

\begin{lemma}
\label{lemma-wc}
We have 
\begin{align}
\label{eq:equ-L}
L=\E_\nu\left[e^{-\tfrac12\sk{\theta,H_\beta^{-1}\theta}}\right]
=\lim_{W_{\ell\delta}\downarrow 0}\E_{\mu^W}\left[e^{-\tfrac12\sk{\theta,H_\beta^{-1}\theta}}\right], 
\end{align}
where $W_{\ell\delta}$ is the (positive) weight associated to the edge $\ell\sim \delta$.
\end{lemma}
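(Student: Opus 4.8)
The statement relates an expectation with respect to the measure $\nu=\nu^{W,1}$ (defined on the vertex set $V$) to a limit of expectations with respect to the $H^{2|2}$-measure $\mu^W$ on the enlarged graph $(\tilde V,\tilde E)$ with $\tilde V=V\cup\{\delta\}$ and $\delta$ attached to $V$ through the single edge $\ell\sim\delta$ of weight $W_{\ell\delta}$. The first equality $L=\E_\nu[e^{-\frac12\sk{\theta,H_\beta^{-1}\theta}}]$ was already established just above the lemma; so the real content is the second equality, identifying this with $\lim_{W_{\ell\delta}\downarrow 0}\E_{\mu^W}[e^{-\frac12\sk{\theta,H_\beta^{-1}\theta}}]$. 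The natural strategy is to compare the law of $(\beta_i)_{i\in V}$ under $\mu^W$ (which, via Corollary~2 of \cite{sabot-tarres-zeng15}, is a marginal of the random-Schr\"odinger description) with the law under $\nu$, and show the densities converge as $W_{\ell\delta}\downarrow 0$.

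\textbf{Step 1: identify the law of $\beta$ under $\mu^W$.} By Corollary~\ref{co:laplace-trafo-beta}, the Laplace transform of $(\beta_i)_{i\in V}$ under $\mu^W$ on $(\tilde V,\tilde E)$ is
\begin{align}
\label{eq:wc-laplace}
\cL^W(\lambda)=\prod_{(i\sim j)\in\tilde E}e^{W_{ij}(1-\sqrt{1+\lambda_i}\sqrt{1+\lambda_j})}\prod_{i\in V}\frac{1}{\sqrt{1+\lambda_i}},\qquad \lambda_\delta=0.
\end{align}
Since $\lambda_\delta=0$, the edge $\ell\sim\delta$ contributes the factor $e^{W_{\ell\delta}(1-\sqrt{1+\lambda_\ell})}$, and removing it (i.e.\ setting $W_{\ell\delta}=0$) turns $\cL^W$ into $\prod_{(i\sim j)\in E}e^{W_{ij}(1-\sqrt{1+\lambda_i}\sqrt{1+\lambda_j})}\prod_{i\in V}(1+\lambda_i)^{-1/2}$. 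On the other hand, the explicit density of $\nu$ displayed above the lemma gives, after a standard Gaussian computation (integrating out the quadratic form $\sk{\phi,H_\beta\phi}$ at $\phi=1$, exactly as in the derivation of $L$), the Laplace transform $\E_\nu[e^{-\sk{\lambda,\beta}}]=\prod_{(i\sim j)\in E}e^{W_{ij}(1-\sqrt{1+\lambda_i}\sqrt{1+\lambda_j})}\prod_{i\in V}(1+\lambda_i)^{-1/2}$ as well; equivalently one invokes Theorem~1 of \cite{sabot-tarres-zeng15} directly. Thus $\law_\nu(\beta)=\lim_{W_{\ell\delta}\downarrow0}\law_{\mu^W}(\beta)$, the limit being taken in the sense of weak convergence (the densities, all supported on $\{H_\beta>0\}$, converge pointwise and one gets a probability measure in the limit, so Scheff\'e's lemma upgrades this to convergence in total variation).

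\textbf{Step 2: pass the limit through the expectation.} Apply this convergence to the bounded continuous test function $\beta\mapsto e^{-\frac12\sk{\theta,H_\beta^{-1}\theta}}$ on $\{H_\beta>0\}$. Since $\theta\in(0,\infty)^V$ and $H_\beta^{-1}$ is positive definite on this set, the integrand lies in $(0,1]$, hence is bounded and continuous; weak convergence (or total-variation convergence via Scheff\'e) then yields
\begin{align}
\lim_{W_{\ell\delta}\downarrow0}\E_{\mu^W}\!\left[e^{-\frac12\sk{\theta,H_\beta^{-1}\theta}}\right]
=\E_\nu\!\left[e^{-\frac12\sk{\theta,H_\beta^{-1}\theta}}\right]=L,
\end{align}
which is the claim. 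A technical point worth spelling out: in $\E_{\mu^W}[e^{-\frac12\sk{\theta,H_\beta^{-1}\theta}}]$ the matrix $H_\beta$ is the $|V|\times|V|$ matrix built from the restricted weights $(W_{ij})_{i,j\in V}$ as in \eqref{eq:def-H-beta}, so it does \emph{not} involve $W_{\ell\delta}$ directly; the only dependence on $W_{\ell\delta}$ is through the law of $\beta$, via \eqref{eq:wc-laplace}. One must also check $H_\beta>0$ holds $\mu^W$-a.s.\ (true because $A^W_{VV}$ is positive definite and $(H_\beta)_{ij}=e^{-u_i}A^W_{ij}(u)e^{-u_j}$ by \eqref{eq:measur2}), so the integrand is a.s.\ well-defined.

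\textbf{Main obstacle.} The routine part is the Gaussian integration and matching of Laplace transforms; the delicate point is justifying the interchange of limit and expectation. Although the integrand is uniformly bounded, the measures $\mu^W$ live on different probability spaces $\Omega_{\tilde V}$ for each $W_{\ell\delta}$, so one genuinely needs to argue at the level of the pushforward laws of $\beta$ on $\R^V$ and check that the explicit densities (which have the factor $1_{\{H_\beta>0\}}$ and $(\det H_\beta)^{-1/2}$, the latter possibly large near the boundary $\{H_\beta\to 0^+\}$) converge in a strong enough sense — this is exactly where Scheff\'e's lemma, combined with the fact that both the limit and the prelimit are probability measures, does the work and avoids any domination issue coming from the $(\det H_\beta)^{-1/2}$ singularity.
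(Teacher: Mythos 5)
Your Step 1 correctly identifies the key relation between the two Laplace transforms: comparing $\cL^W(\lambda)$ with $\E_\nu[e^{-\sk{\lambda,\beta}}]$ gives exactly the paper's relation $\E_\nu[e^{-\sk{\lambda,\beta}}]=\E_{\mu^W}[e^{-\sk{\lambda,\beta}}]\,e^{-W_{\ell\delta}(1-\sqrt{1+\lambda_\ell})}$, and letting $W_{\ell\delta}\downarrow 0$ is the right move. The problem is the jump you make next: from convergence of Laplace transforms on the real domain $\Lambda_V$ you assert that ``the densities, all supported on $\{H_\beta>0\}$, converge pointwise'' and then invoke Scheff\'e. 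That pointwise convergence of densities is nowhere established, and it does not follow from Laplace-transform convergence; indeed you never even write down what the density of $\law_{\mu^W}(\beta)$ is. Concretely, the $\mu^W$-law of $\beta$ is the $\nu$-law convolved in the $\ell$-coordinate with an inverse-Gaussian-type probability measure whose Laplace transform is $e^{W_{\ell\delta}(1-\sqrt{1+\lambda_\ell})}$; showing that this convolution converges pointwise to $p_\nu$ near the singular set $\{\det H_\beta=0\}$ is precisely the regularity issue you flag in your ``Main obstacle'' paragraph, but you never resolve it --- you only reassert that Scheff\'e ``does the work.''

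The paper sidesteps this entirely. It regards the Laplace-transform identity as an identity between functions that extend holomorphically in $\lambda$ to $((-1,\infty)+i\R)^V$, applies the identity theorem to deduce the same relation for complex $\lambda$, specializes to purely imaginary $\lambda$ to get pointwise convergence of characteristic functions, and then concludes weak convergence of $\law_{\mu^W}(\beta)$ to $\law_\nu(\beta)$ by L\'evy's continuity theorem. The pass through the expectation is then done via the Portmanteau theorem, using that $1_{\{H_\beta>0\}}\exp(-\tfrac12\sk{\theta,H_\beta^{-1}\theta})$ is bounded with discontinuity set of $\nu$-measure zero. This last observation is not cosmetic: the function is not continuous on all of $\R^V$ (as $\det H_\beta\to 0^+$ the quadratic form $\sk{\theta,H_\beta^{-1}\theta}$ need not blow up, so continuity across the boundary can fail), and your proposal hides this behind the unjustified Scheff\'e claim. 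In short: the first step is right and matches the paper, but the crucial upgrade from Laplace-transform convergence to convergence of the laws is missing, and once the Scheff\'e claim is removed you also need the Portmanteau / null-discontinuity observation that the paper supplies.
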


\paragraph{Step 2.} 
To construct the analog of the additional gamma variable in  \cite{sabot-tarres-zeng15},
we  select now  as special vertex in $V$
the unique vertex $\ell$ connected to $\delta$.
 
Let us consider the reduced graph consisting of the vertex 
set $V^\circ=V\setminus\{\ell\}$ and edge set $E^\circ=E\setminus\{ (i\sim\ell): i\in V\}$.
In the same way, let $W^\circ\in\R^{V\times V}$ be the reduced weight matrix  given 
by $W^\circ_{ij}=W_{ij}$ for $i,j\in V$.
 
With respect to the smaller graph $\G=(V,E)$, the objects 
$V^\circ$, $E^\circ$, $\ell$, $V$, $E$, $W^\circ$, and $U_{V^\circ}$ play the same role as 
$V$, $E$, $\delta$, $\tilde V$, $\tilde E$, $W$, and $U_V$, with respect to the 
larger original graph $\tilde\G=(\tilde V,\tilde E)$.
In particular, we have the following analog of 
\eqref{eq:def-hat-G}:
\begin{equation}\label{eq:def-reduced-G}
\hat G^{V^\circ,W^\circ}\hspace{-0,2cm}:U_{V^\circ}\to\R^{V\times V}, \quad 
\hat G^{V^\circ,W^\circ}_{ij}(\tilde u)
= \left\{\begin{array}{ll}
e^{\tilde u_i}(A^{W^\circ}_{V^\circ V^\circ}(\tilde u)^{-1})_{ij}e^{\tilde u_j} & i,j\in V^\circ, \\
0 & i=\ell\text{ or }j=\ell. \\
\end{array}\right.
\end{equation}
Recall that  $H_{\beta(u)}^{-1}=\hat G^{V,W}_{VV}(u)$ by \eqref{eq:relation-H-beta-hat-G}.
To relate $U_{V}$ and $U_{V^{\circ}}$ we define the shift  
\begin{align}
\label{eq:def-tilde}
\mbox{}^\sim~:~U_V~\to~U_{V^\circ}, \quad
u\mapsto\tilde u=(\tilde u_i=u_i-u_\ell)_{i\in V}. 
\end{align}
Then we have the following
relation between $\hat G^{V,W}_{VV}(u)$ and $\hat G^{V^\circ,W^\circ}(\tilde u)$.

\begin{lemma}
\label{lemma-Grel} 
The matrices  $\hat G^{V,W}_{VV}(u)$ and $\hat G^{V^\circ,W^\circ}(\tilde u)$
satisfy the following relation
\begin{align}
\label{eq:relation-G-hat-G}
\hat G^{V,W}_{ij}(u) = \frac{e^{\tilde{u}_i+\tilde{u}_j}}{W_{\ell\delta} e^{-u_\ell}} + 
\hat G^{V^\circ,W^\circ}_{ij}(\tilde eu)
\quad\text{for all }i,j\in V. 
\end{align}
\end{lemma}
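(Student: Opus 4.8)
The plan is to express $\hat G^{V,W}_{ij}(u)$ via the Schur complement obtained when deleting the row and column corresponding to the special vertex $\ell$ from the matrix $A^W_{VV}(u)$. Recall $\hat G^{V,W}_{ij}(u)=e^{u_i}(A^W_{VV}(u)^{-1})_{ij}e^{u_j}$ for $i,j\in V$. Since $\delta$ is connected only to $\ell$, the entry $A^W_{\ell\ell}(u)=\sum_{k\in\tilde V}W_{\ell k}e^{u_\ell+u_k}$ contains a single term coming from the edge $\ell\sim\delta$, namely $W_{\ell\delta}e^{u_\ell}$ (using $u_\delta=0$); the remaining contributions to $A^W_{\ell\ell}(u)$ involve only edges inside $V$ and so coincide with the corresponding diagonal entry of $A^{W^\circ}_{VV}(u)$ viewed on the full index set $V$. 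In other words, writing $B=A^W_{VV}(u)$ and $B^\circ$ for the matrix on $V\times V$ that equals $A^W_{VV}(u)$ off the $\ell$-th diagonal entry and has $\ell$-th diagonal entry reduced by $W_{\ell\delta}e^{u_\ell}$, we have $B=B^\circ+W_{\ell\delta}e^{u_\ell}E_{\ell\ell}$, where $E_{\ell\ell}$ is the elementary matrix with a single $1$ in position $(\ell,\ell)$.

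First I would apply the Sherman–Morrison rank-one update formula to invert $B=B^\circ+W_{\ell\delta}e^{u_\ell}e_\ell e_\ell^t$. This gives
\begin{align}
(B^{-1})_{ij}=((B^\circ)^{-1})_{ij}-\frac{W_{\ell\delta}e^{u_\ell}((B^\circ)^{-1})_{i\ell}((B^\circ)^{-1})_{\ell j}}{1+W_{\ell\delta}e^{u_\ell}((B^\circ)^{-1})_{\ell\ell}}.
\end{align}
Next I would identify $((B^\circ)^{-1})_{ij}$ in terms of the reduced Green's function: since $B^\circ$, with respect to the graph $\G=(V,E)$ and the special vertex $\ell$, plays exactly the role that $A^W_{VV}(u)$ plays for $\tilde\G$ with special vertex $\delta$, one has $\hat G^{V^\circ,W^\circ}_{ij}(\tilde u)=e^{\tilde u_i}((B^\circ)^{-1})_{ij}e^{\tilde u_j}$ for $i,j\in V^\circ$, and $((B^\circ)^{-1})_{\ell j}=0$ is \emph{not} what happens—rather, the convention in \eqref{eq:def-reduced-G} sets $\hat G^{V^\circ,W^\circ}_{ij}=0$ only when the index equals $\ell$, so I must be careful. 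The cleaner route is to observe that in horospherical coordinates the natural object is $H_{\beta(u)}=D^{-1}A^W_{VV}(u)D^{-1}$ with $D=\diag(e^{u_i})_{i\in V}$, and likewise for the reduced graph; translating the Sherman–Morrison identity through these conjugations, and using the shift $\tilde u_i=u_i-u_\ell$ together with $e^{\tilde u_\ell}=1$, the rank-one correction term becomes $e^{\tilde u_i+\tilde u_j}/(W_{\ell\delta}e^{-u_\ell})$ after simplification, while the remaining term becomes $\hat G^{V^\circ,W^\circ}_{ij}(\tilde u)$.

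The main obstacle I anticipate is bookkeeping the several conjugations by diagonal exponential factors and the shift $u\mapsto\tilde u$ consistently, so that the denominator $1+W_{\ell\delta}e^{u_\ell}((B^\circ)^{-1})_{\ell\ell}$ collapses correctly and the $u_\ell$-dependence lands exactly as $W_{\ell\delta}e^{-u_\ell}$ in the first term of \eqref{eq:relation-G-hat-G}; one must also verify the boundary cases $i=\ell$ or $j=\ell$ separately, where $\tilde u_\ell=0$ and $\hat G^{V^\circ,W^\circ}_{i\ell}=0$ by definition, checking that the formula still reproduces $\hat G^{V,W}_{i\ell}(u)$ correctly. Once the diagonal/shift algebra is organized, the identity follows by a direct computation; no deeper input than invertibility of $A^{W^\circ}_{V^\circ V^\circ}(\tilde u)$ (which holds since the reduced graph is connected) is needed.
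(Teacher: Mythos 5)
Your rank-one-update plan runs into a fatal obstruction: the matrix $B^\circ$ you introduce (the full $V\times V$ Laplacian-type matrix of the reduced graph $\G=(V,E)$ with conductances $W^\circ_{ij}e^{u_i+u_j}$) is \emph{singular}. Indeed $B^\circ\mathbf 1=0$, since for each $i\in V$ one has
\begin{align}
B^\circ_{ii}+\sum_{j\neq i}B^\circ_{ij}=\sum_{k\in V}W^\circ_{ik}e^{u_i+u_k}-\sum_{j\neq i}W^\circ_{ij}e^{u_i+u_j}=0.
\end{align}
Hence $(B^\circ)^{-1}$ does not exist, and the Sherman--Morrison formula you wrote down cannot be applied. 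The analogy you invoke is off by one index: the matrix that plays, for the reduced graph with reference vertex $\ell$, the role that $A^W_{VV}(u)$ plays for $\tilde\G$ with reference $\delta$ is $A^{W^\circ}_{V^\circ V^\circ}(\tilde u)$ (with the $\ell$-th row and column deleted), and $A^W_{VV}(u)$ is \emph{not} a rank-one perturbation of that smaller matrix padded to $V\times V$. Passing to $H_\beta=D^{-1}A^W_{VV}(u)D^{-1}$ with $D=\diag(e^{u_i})_{i\in V}$, as you suggest, does not cure this: the conjugated base matrix is still singular with kernel spanned by $e^{-u}_V$.

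The paper sidesteps the singularity by performing a block (Schur-complement) inversion of $H^W_{\beta^V(u)}$ with respect to the partition $V=V^\circ\cup\{\ell\}$, where the block $M=(H^W_{\beta^V(u)})_{V^\circ V^\circ}=(H^{W^\circ}_{\beta^{V^\circ}(\tilde u)})_{V^\circ V^\circ}$ is a principal submatrix of a positive-definite matrix and hence invertible. A second ingredient your plan does not mention is the identity $H^{W^\circ}_{\beta^{V^\circ}(\tilde u)}\,e^{\tilde u}_{V^\circ}=W_{V^\circ\ell}$, the reduced-graph instance of \eqref{eq:exp-u-in-terms-of-H-beta}; it gives $M^{-1}W_{V^\circ\ell}=e^{\tilde u}_{V^\circ}$ and collapses the Schur complement to the scalar $b=2\beta_\ell(u)-W_{\ell V^\circ}M^{-1}W_{V^\circ\ell}=W_{\ell\delta}e^{-u_\ell}$. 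This is precisely what turns the rank-one block of $(H^W_{\beta^V(u)})^{-1}$ into $e^{\tilde u}_V(e^{\tilde u}_V)^t/(W_{\ell\delta}e^{-u_\ell})$, yielding \eqref{eq:relation-G-hat-G}, with the boundary cases $i=\ell$ or $j=\ell$ handled automatically since $\tilde u_\ell=0$ and the second summand vanishes by \eqref{eq:def-reduced-G}. If you insist on a rank-one-update viewpoint, you would need a singular-base variant of Sherman--Morrison using the Moore--Penrose pseudoinverse of $B^\circ$ together with the projector onto $\ker B^\circ=\R\mathbf 1$; the block inversion is shorter and lines up directly with the conventions of the paper.
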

This relation is an analog to the second formula in 
Proposition 8 of \cite{sabot-zeng15}. The proof is given below.

\paragraph{Step 3.} 
Using \eqref{eq:relation-G-hat-G}, we get 
\begin{equation}
\sk{\theta,H_\beta^{-1}\theta}
= \sk{\theta,\hat G^{V,W}_{VV}(u)\theta}= 
 \sk{\theta,\hat G^{V^\circ,W^\circ}(\tilde u)\theta} 
+ \frac{e^{u_\ell}}{W_{\ell\delta}} \sk{\theta,e^{\tilde u}}^2.
\end{equation}
Inserting this in \eqref{eq:equ-L}, we obtain 
\begin{align}
\label{eq:L-umgeformt}
L= & \lim_{W_{\ell\delta}\downarrow 0}\E_{\mu^W}\left[e^{-\tfrac12\sk{\theta,H_\beta^{-1}\theta}}\right]
=\lim_{W_{\ell\delta}\downarrow 0}\E_{\mu^W}
\left[e^{-\tfrac12\sk{\theta,\hat G^{V^\circ,W^\circ}(\tilde u)\theta}
-\frac{e^{u_\ell}}{2W_{\ell\delta}}\sk{\theta,e^{\tilde u}}^2} \right] \cr
= & \lim_{W_{\ell\delta}\downarrow 0}\E_{\mu^W}
\left[e^{-\tfrac12\sk{\theta,\hat G^{V^\circ,W^\circ}(\tilde u)\theta}}
\E_{\mu^W}\left[\left. 
e^{-\frac{e^{u_\ell}}{2W_{\ell\delta}}\sk{\theta,e^{\tilde u}}^2} \right|\tilde u \right]\right].
\end{align}
In the following, we denote the $H^{2|2}$-measure $\mu^W$, defined in 
\eqref{eq:def-mu-W}, by $\mu^{W,\tilde\G}_\delta$, in order to stress the dependence on 
the graph $\tilde\G$ and the reference point $\delta$, which satisfies 
$u_\delta=0$. The conditional expectation is described in the following lemma.

\begin{lemma}
\label{lemma-cond-exp}
Let $\tilde{u}\in U_{V^{\circ}}$ be defined as in \eqref{eq:def-tilde}.
We have
\begin{align}
\label{eq:cond-expectation}
\E_{\mu^{W,\tilde\G}_\delta}\left[\left. 
e^{-\frac{e^{u_\ell}}{2W_{\ell\delta}}\sk{\theta,e^{\tilde u}}^2} \right|\tilde u \right]= 
e^{W_{\delta \ell} - \sqrt{W_{\delta\ell}^{2}+\sk{\theta,e^{\tilde u}}^2} }.
\end{align}
\end{lemma}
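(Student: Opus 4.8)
The integrand $e^{-\frac{e^{u_\ell}}{2W_{\ell\delta}}\sk{\theta,e^{\tilde u}}^2}$ is a function of the $u$-variables alone --- in fact a function of $u_\ell$ and of $\tilde u=(u_i-u_\ell)_{i\in V}$ only --- so the first step is to identify the conditional law of $u_\ell$ given $\tilde u$ under $\mu^{W,\tilde\G}_\delta$. I would pass to the coordinates $(u_\ell,(\tilde u_i)_{i\in V^\circ})$ (with Jacobian $1$), integrate out the $s$-variables in \eqref{eq:def-mu-W} --- which produces a factor $(\det A^W_{VV}(u))^{-1/2}$ --- and then use the spanning-tree form of $\mu^W$: since $\delta$ is a leaf of $\tilde\G$, every spanning tree $T\in\T$ contains the pendant edge $\ell\sim\delta$, so $T=\{\ell\sim\delta\}\sqcup T'$ with $T'$ a spanning tree of $\G=(V,E)$. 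Collecting the powers of $e^{u_\ell}$ coming from the $|T'|=|V|-1$ tree-edge weights, from $\prod_{i\in V}e^{-u_i}$, and from $e^{-W_{\ell\delta}[\cosh(u_\ell)-1]}$ (the only edge term involving $u_\ell$), everything except $e^{-W_{\ell\delta}\cosh(u_\ell)-\frac12 u_\ell}\,du_\ell$ turns out to depend on $\tilde u$ only. Hence $u_\ell$ is independent of $\tilde u$ under $\mu^{W,\tilde\G}_\delta$, with conditional law proportional to $e^{-W_{\ell\delta}\cosh v-\frac12 v}\,dv$; and this is precisely the law of $u_\ell$ under the $H^{2|2}$-measure of the one-edge graph $(\{\ell,\delta\},\{\ell\sim\delta\})$ with weight $W_{\ell\delta}$ and reference point $\delta$.

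On that one-edge graph the remaining one-dimensional computation can be done within the tools already developed, using Corollary \ref{co:expectation-generating-fnc}: for every $\tau\le 0$ it gives $\E[e^{\tau\, e^{u_\ell}(1+is_\ell)}]=e^{\tau}$. Conditioning on $u_\ell$ and integrating out the Gaussian $s_\ell$, which has conditional variance $(W_{\ell\delta}e^{u_\ell})^{-1}$ by the Gaussian part $e^{-\frac12 s_\ell^2 W_{\ell\delta}e^{u_\ell}}$ of \eqref{eq:def-mu-W}, yields $\E\big[\exp(\tau e^{u_\ell}-\tfrac{\tau^2}{2W_{\ell\delta}}e^{u_\ell})\big]=e^{\tau}$. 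Writing $t=-\tau\ge 0$ and $a=t+\tfrac{t^2}{2W_{\ell\delta}}$, this reads $\E[e^{-a\,e^{u_\ell}}]=e^{-t}$; since $t\mapsto t+t^2/(2W_{\ell\delta})$ is an increasing bijection of $[0,\infty)$, solving the quadratic gives $\E[e^{-a\,e^{u_\ell}}]=e^{W_{\ell\delta}-\sqrt{W_{\ell\delta}^2+2W_{\ell\delta}a}}$ for all $a\ge 0$. Specializing to $a=\sk{\theta,e^{\tilde u}}^2/(2W_{\ell\delta})$ and using $W_{\delta\ell}=W_{\ell\delta}$ produces exactly the right-hand side of \eqref{eq:cond-expectation}.

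The main obstacle is the bookkeeping in the first step: one must check that the exponents of $e^{u_\ell}$ from the tree weights, the $s$-Gaussian determinant, $\prod_{i\in V}e^{-u_i}$ and the pendant-edge $\cosh$ term combine into the single clean weight $e^{-W_{\ell\delta}\cosh(u_\ell)-\frac12 u_\ell}$, with all $\tilde u$-dependence genuinely separating off. Everything else is routine. As an alternative to the last paragraph one could instead evaluate $\int_{\R}e^{-\frac{e^v}{2W_{\ell\delta}}\sk{\theta,e^{\tilde u}}^2-W_{\ell\delta}\cosh v-\frac12 v}\,dv$ directly, by completing the ``hyperbolic square'' through the shift $v\mapsto v+\log\!\big(W_{\ell\delta}/\sqrt{W_{\ell\delta}^2+\sk{\theta,e^{\tilde u}}^2}\big)$ and using $\int_{\R}e^{-a\cosh v-\frac12 v}\,dv=e^{-a}\sqrt{2\pi/a}$, the latter following from the substitution $w=e^v$ together with $r=\sqrt w-1/\sqrt w$; but the route via Corollary \ref{co:expectation-generating-fnc} is preferable since it avoids special functions entirely.
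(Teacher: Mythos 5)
Your proposal is correct, and it takes a genuinely different route than the paper for both halves of the argument, while still staying within the paper's own toolkit.

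For the factorization of the $u$-marginal into an $u_\ell$-factor and a $\tilde u$-factor, the paper simply invokes Lemma A.1 of \cite{disertori-merkl-rolles2015a}, whereas you derive it directly from the spanning-tree representation of $\mu^W$. Your observation that $\delta$ being a pendant vertex forces every spanning tree to contain $\ell\sim\delta$ is exactly what makes the bookkeeping come out cleanly: each $T'$ contributes $|V|-1$ factors of $e^{2u_\ell}$, which combine with $e^{u_\ell}$ from the pendant edge, $e^{-|V|u_\ell}$ from $\prod_{i\in V}e^{-u_i}$, and the $e^{-(|V|-\frac12)u_\ell}$ from the square root of $\det A^W_{VV}$ produced by the $s$-integration, leaving precisely $e^{-\frac12 u_\ell}$; I checked the exponents and they match. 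This is more work than citing the external lemma, but it is self-contained and does not require the reader to locate a result in a different preprint. For the resulting one-dimensional expectation, the paper swaps the reference point from $\delta$ to $\ell$ (picking up a Radon--Nikodym factor $e^{u_\delta-u_\ell}$) and then invokes Example \ref{ex:expectation-exp-u}, keeping the thematic emphasis on the local scaling transformation $S_\lambda$. You instead apply Corollary \ref{co:expectation-generating-fnc} on the one-edge graph, integrate out the Gaussian $s_\ell$ to get the identity $\E\bigl[e^{-(t+t^2/(2W_{\ell\delta}))e^{u_\ell}}\bigr]=e^{-t}$ for $t\ge0$, and then invert the bijection $t\mapsto t+t^2/(2W_{\ell\delta})$ by solving a quadratic; this yields the square root directly. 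I verified that your quadratic gives $\E[e^{-ae^{u_\ell}}]=e^{W_{\ell\delta}-\sqrt{W_{\ell\delta}^2+2W_{\ell\delta}a}}$ and that specializing $a=\sk{\theta,e^{\tilde u}}^2/(2W_{\ell\delta})$ recovers \eqref{eq:cond-expectation}. Both methods reuse the paper's Ward-identity machinery, but your inversion argument is arguably a cleaner explanation of where the square root comes from than the paper's $\lambda_\delta=2c/W_{\ell\delta}$ substitution.
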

The proof uses independence of $\tilde{u}$ and $u_\ell$ with respect to 
$\mu^{W,\tilde\G}_\delta$. It is given below. 

Now, inserting \eqref{eq:cond-expectation} into \eqref{eq:L-umgeformt}, we obtain
\begin{equation}
L=  \lim_{W_{\ell\delta}\downarrow 0}\E_{\mu^{W,\tilde\G}_\delta}
\left[e^{-\tfrac12\sk{\theta,\hat G^{V^\circ,W^\circ}(\tilde u)\theta}}e^{W_{\delta \ell} - \sqrt{W_{\delta\ell}^{2}+\sk{\theta,e^{\tilde u}}^2} }
\right]=
\E_{\mu^{W,\tilde\G}_\delta}
\left[e^{-\tfrac12\sk{\theta,\hat G^{V^\circ,W^\circ}(\tilde u)\theta}-\sk{\theta,e^{\tilde u}}}
\right].
\end{equation}
The measure $\mu^{W,\tilde\G}_\delta$, on the  {\it bigger} weighted graph 
$(\tilde\G,W)$ with reference point $\delta$, is related to the 
measure   $\mu^{W^\circ,\G}_\ell$  on the 
{\it smaller} weighted graph $(\G,W^\circ)$ with reference point $\ell$ as follows. 
The  $\mu^{W,\tilde\G}_\delta$-law of 
$\tilde u=(\tilde u_i=u_i-u_\ell)_{i\in V}$,  with $u\in U_{V}$, equals the 
$\mu^{W^\circ,\G}_\ell$-law of $u=(u_i)_{i\in V}\in U_{V^{\circ}}$.
Hence, applying \eqref{eq:expectation-G-exp-beta} 
from Theorem \ref{thm:expectation-G}
with $-\theta$ and $\lambda=0$, we get
\begin{align}
L= & \E_{\mu^{W^\circ,\G}_\ell}
\left[e^{-\tfrac12\sk{\theta,\hat G^{V^\circ,W^\circ}(u)\theta}-\sk{\theta,e^u}}
\right]
=\cL^{W^\circ}(0)e^{\sk{-\theta,1}}=e^{-\sk{\theta,1}}.
\end{align}
This proves formula \eqref{eq:letac-xiaolin} in the special case $\phi=1$. 

Finally, we give the proof of Lemmas \ref{lemma-wc}-\ref{lemma-cond-exp}.

\medskip\noindent\begin{proof}[Proof of Lemma \ref{lemma-wc}]
By Proposition 1 of \cite{sabot-tarres-zeng15}, the Laplace transform of 
$\beta=(\beta_i)_{i\in V}$ with respect to $\nu$ is given by 
\begin{align}
\E_\nu\left[e^{-\sk{\lambda,\beta}}\right] 
= \prod_{(i\sim j)\in E} e^{W_{ij}(1-\sqrt{1+\lambda_i} \sqrt{1+\lambda_j} )}
\prod_{i\in V} \frac{1}{\sqrt{1+\lambda_i}}
\end{align}
for $\lambda\in(-1,\infty)^V$. Comparing with formula \eqref{eq:Laplace} from 
Theorem \ref{thm:expectation-G}, we find for these~$\lambda$
\begin{align}
\label{eq:relation-vu-mu} 
\E_\nu\left[e^{-\sk{\lambda,\beta}}\right] 
=\E_{\mu^W}\left[e^{-\sk{\lambda,\beta}}\right]e^{-W_{\ell\delta}(1-\sqrt{1+\lambda_\ell})}
\end{align}
with the $H^{2|2}$ measure $\mu^W$ defined in \eqref{eq:def-mu-W}. To see this, 
one may extend $\lambda$ by the additional value $\lambda_\delta=0$.

Both sides of \eqref{eq:relation-vu-mu} are complex analytic functions of 
$\lambda\in((-1,\infty)+i\R)^V$. The square root is understood as its principal 
branch, i.e.\ $\sqrt{r^2e^{2i\varphi}}=re^{i\varphi}$ for $r>0$, $-\pi<\varphi<\pi$. 
Although equation \eqref{eq:relation-vu-mu}
was derived for {\it real} $\lambda\in(-1,\infty)^V$ only, the identity theorem 
for holomorphic functions implies that it holds also for {\it complex}
$\lambda\in((-1,\infty)+i\R)^V$. 
The identity \eqref{eq:relation-vu-mu} holds for any value $W_{\ell\delta}>0$. 
Hence, for all $\lambda\in((-1,\infty)+i\R)^V$, one has 
\begin{align}
\label{eq:exp-nu-as-limit}
\E_\nu\left[e^{-\sk{\lambda,\beta}}\right] 
=\lim_{W_{\ell\delta}\downarrow 0}\E_{\mu^W}\left[e^{-\sk{\lambda,\beta}}\right],
\end{align}
where in the last limit $W_{ij}$ is kept fixed unless $\{i,j\}=\{\ell,\delta\}$. 
In particular, taking {\it imaginary} $\lambda\in (i\R)^V$, equation
\eqref{eq:exp-nu-as-limit} shows a pointwise convergence of Fourier transforms. 
We conclude that $\law_{\mu^W}(\beta)$ converges 
weakly to $\law_\nu(\beta)$ as $W_{\ell\delta}\downarrow 0$. 
Since $H_\beta$ is positive definite, $\sk{\theta,H_\beta^{-1}\theta}>0$. 
Note that $1_{\{H_\beta>0\}}\exp(-\tfrac12\sk{\theta,H_\beta^{-1}\theta})$ 
is a bounded function of $\beta\in\R^V$ and its set of discontinuities has 
$\nu$-measure $0$. Consequently, using weak convergence, the result follows.
\end{proof}

\medskip\noindent\begin{proof}[Proof of Lemma \ref{lemma-Grel}]
Remember that $\hat G^{V,W}_{VV}(u)= (H^W_{\beta^V(u)})^{-1}$.
By using the partition  $V=V^{\circ}\cup\{\ell \}$, we can write
\begin{align}
H^W_{\beta^V(u)}= \begin{pmatrix}
2\beta_{\ell} (u) & -W_{\ell V^{\circ}}\\
-W_{V^{\circ} \ell}& M\\
\end{pmatrix},
\end{align}
where $M:=(H^W_{\beta^V(u)})_{V^\circ V^\circ}$.
Since $\delta$ is not directly connected to any vertex in $V^\circ$, we have
$\beta^{W^\circ}(\tilde u)=\beta^{W}(u)_{V^\circ}$. Hence 
$(H^{W^\circ}_{\beta^{V^\circ}(\tilde u)})_{V^\circ V^\circ}=(H^W_{\beta^V(u)})_{V^\circ V^\circ}$, and we 
conclude 
\begin{align} 
\hat G^{V^\circ,W^\circ}_{V^\circ V^\circ}(\tilde u) = M^{-1}.
\end{align}
We can write $ (H^W_{\beta^V(u)})^{-1}$ using the following block-matrix inversion formula
\begin{align}
\begin{pmatrix}
A& C\\
D& B\\
\end{pmatrix}^{-1}= \begin{pmatrix}
b^{-1}&-b^{-1} CB^{-1} \\
-B^{-1}D b^{-1}& B^{-1}+ B^{-1}Db^{-1}CB^{-1}\\
\end{pmatrix}, \qquad \mbox{with}\ b= A-CB^{-1}D
\end{align}
which holds if $B$ and $b$ are invertible. 
In our case $b=2\beta_\ell (u)-W_{\ell V^{\circ}} M^{-1} W_{V^{\circ} \ell}$ is a scalar and 
\begin{align}
2\beta_{\ell} (u)= W_{\ell\delta } e^{-u_{\ell}} + \sum_{j\in V^{\circ}} W_{\ell j} e^{\tilde{u}_{j}}=
 W_{\ell\delta } e^{-u_{\ell}} +  W_{\ell V^{\circ}} e^{\tilde{u}}_{ V^{\circ}}
\end{align}
with $e^{\tilde{u}}_{ V^{\circ}}=(e^{\tilde u_i})_{i\in V^{\circ}}$. Hence
\begin{align}
(H^W_{\beta^V(u)})^{-1}= & \frac{1}{b} \begin{pmatrix}
1 &  W_{\ell V^{\circ}} M^{-1}  \\
 M^{-1} W_{V^{\circ} \ell}&  M^{-1} W_{V^{\circ} \ell}  W_{\ell V^{\circ}} M^{-1}\\
\end{pmatrix}+ \begin{pmatrix}
0&0\\
0& M^{-1}\\
\end{pmatrix}\cr 
= & \frac{1}{b} \begin{pmatrix} 
1 \\ 
M^{-1} W_{V^{\circ} \ell}\end{pmatrix}
\begin{pmatrix} 
1 \\ 
M^{-1} W_{V^{\circ} \ell}\end{pmatrix}^t
+\hat G^{V^\circ,W^\circ}(\tilde u).
\label{eq:relation-hat-G-5}
\end{align}
Now recall that,  by 
\eqref{eq:exp-u-in-terms-of-H-beta}, we have $H^W_{\beta(u)}e^u_V=W_{V\delta}$, for all $u\in U_{V}$.
Applying the same identity to the smaller graph $(V,E)$ with reference
point $\ell$ we obtain  $H^{W^{\circ}}_{\beta(\tilde{u})}e^{\tilde{u}}_{V^{\circ}}=W_{V^{\circ}\ell}$, for all 
$\tilde{u}\in U_{V^{\circ}}$. We obtain
\begin{align}
 M^{-1} W_{V^{\circ} \ell} = e^{\tilde{u}}_{V^{\circ}}, \quad
\begin{pmatrix} 
1 \\ 
M^{-1} W_{V^{\circ} \ell}\end{pmatrix}=e^{\tilde{u}}_V .
\end{align}
Furthermore, $b= W_{\ell\delta } e^{-u_{\ell}}$ and \eqref{eq:relation-hat-G-5} yields 
the claim written in matrix form:
\begin{align}
\label{eq:relation-G-hat-G-matrix}
\hat G^{V,W}_{VV}(u) = e^{\tilde u}_V \frac{1}{W_{\ell\delta}e^{-u_\ell}}(e^{\tilde u}_V)^t
+ \hat G^{V^\circ,W^\circ}(\tilde u).
\end{align}
\end{proof}

\medskip\noindent\begin{proof}[Proof of Lemma \ref{lemma-cond-exp}]
Let $\Gamma$ denote the graph consisting only of the two vertices  
$\ell$ and $\delta$ and the edge $\ell\sim\delta$ with weight $W_{\ell\delta}$
connecting them. Using Lemma A.1 of \cite{disertori-merkl-rolles2015a},
the laws of $u_\ell$ with respect to $\mu^{W,\tilde\G}_\delta$ and 
$\mu^{W_{\ell\delta},\Gamma}_\delta$ coincide and 
the gradient variables $\tilde u$ are independent of $u_\ell$ with respect to 
$\mu^{W,\tilde\G}_\delta$. 
Thus, abbreviating $C(\tilde u)=\sk{\theta,e^{\tilde u}}^2/(2W_{\ell\delta})$, we get
\begin{align}
\E_{\mu^{W,\tilde\G}_\delta}\left[\left. 
e^{-\frac{e^{u_\ell}}{2W_{\ell\delta}}\sk{\theta,e^{\tilde u}}^2}\right|\tilde u \right]
= & \left. \E_{\mu^{W,\tilde\G}_\delta}\left[e^{-ce^{u_\ell}}\right]\right|_{c=C(\tilde u)}
= \left. \E_{\mu^{W_{\ell\delta},\Gamma}_\delta}\left[e^{-ce^{u_\ell}}\right]\right|_{c=C(\tilde u)}.
\end{align}
In order to compute the last expectation, we exchange the role of $\delta$ and
$\ell$ using $\ell$ as new reference point. 
The $\mu^{W_{\ell\delta},\Gamma}_\delta$-law of $u_\ell-u_\delta$ has the 
Radon-Nikodym derivative $e^{u_\delta-u_\ell}$ with respect to the 
$\mu^{W_{\ell\delta},\Gamma}_\ell$-law of the same function $u_\ell-u_\delta$. 
To see this, note that $t=u_\ell-u_\delta$ has distribution
\begin{align}
\sqrt{\frac{W_{\ell\delta}}{2\pi}
e^{-W_{\ell\delta}(\cosh t-1)}e^{-\tfrac{t}{2}}\,dt}
\end{align}
with respect to $\mu^{W_{\ell\delta},\Gamma}_\delta$.
Hence, 
for any $c>0$, we obtain
\begin{align}
\E_{\mu^{W_{\ell\delta},\Gamma}_\delta}\left[e^{-ce^{u_\ell}}\right]
=& \E_{\mu^{W_{\ell\delta},\Gamma}_\delta}\left[e^{-ce^{u_\ell-u_\delta}}\right]\cr
=& \E_{\mu^{W_{\ell\delta},\Gamma}_\ell}\left[e^{u_\delta-u_\ell}e^{-ce^{u_\ell-u_\delta}}\right]
=\E_{\mu^{W_{\ell\delta},\Gamma}_\ell}\left[e^{u_\delta}e^{-ce^{-u_\delta}}\right].
\end{align}
Note that for the weighted graph $(\Gamma,W_{\ell\delta})$ with reference $\ell$, 
we have $\beta_\delta=\beta_\delta^{W_{\ell\delta}}=\frac12 W_{\ell\delta}e^{-u_\delta}$. 
Thus, abbreviating $\lambda_\delta=2c/W_{\ell\delta}$, we obtain from formula 
\eqref{eq:ex-eu} in Example \ref{ex:expectation-exp-u}
\begin{align}
\E_{\mu^{W_{\ell\delta},\Gamma}_\ell}\left[e^{u_\delta}e^{-ce^{-u_\delta}}\right]
=\E_{\mu^{W_{\ell\delta},\Gamma}_\ell}\left[e^{u_\delta}e^{-\lambda_\delta\beta_\delta}\right]
=\cL^{W_{\ell\delta}}(\lambda_\delta)\sqrt{1+\lambda_\delta}
= e^{W_{\ell\delta}(1-\sqrt{1+\lambda_\delta})};
\end{align}
in the last equality we used \eqref{eq:laplace-trafo-beta}
to calculate $\cL^{W_{\ell\delta}}(\lambda_\delta)$.
Summarizing, this shows
\begin{align}
\E_{\mu^{W,\tilde\G}_\delta}\left[\left. 
e^{-\frac{e^{u_\ell}}{2W_{\ell\delta}}\sk{\theta,e^{\tilde u}}^2}\right|\tilde u \right]=
e^{W_{\ell\delta}\left[1-\sqrt{1+2C(\tilde u)W_{\ell\delta}^{-1}}
\right]}= e^{ W_{\ell\delta}-\sqrt{W_{\ell\delta}^2+\sk{\theta,e^{\tilde u}}^2}  }.
\end{align}
\end{proof}

\subsection{General case}
We deduce the general case of \eqref{eq:letac-xiaolin} from the
special case $\phi=1$ using a scaling argument. In this part of the proof,
we write $H_b^W$ rather than $H_b$ because we are working with varying weights $W$.
Let $\phi,\theta\in(0,\infty)^V$.
We consider the change of variables $b_i'=\phi_i^2 b_i$ for all $i\in V$ and
the rescaled weights $W_{ij}'=\phi_iW_{ij}\phi_j$.  Denoting by 
$\diag\phi\in\R^{V\times V}$ the diagonal matrix with diagonal entries 
$\phi_i$, $i\in V$, we have for $i,j\in V$ and $b\in\R^V$
\begin{align}
(\diag\phi\, H_b^W\diag\phi)_{ij}
=2\phi_ib_i\phi_j\delta_{ij}-\phi_iW_{ij}\phi_j
=2b_i'\delta_{ij}-W_{ij}'
=(H_{b'}^{W'})_{ij}.
\end{align}
Thus, $\diag\phi\, H_b^W\diag\phi=H_{b'}^{W'}$. From this, we deduce 
\begin{align}
(H_b^W)^{-1}=\diag\phi\, (H_{b'}^{W'})^{-1}\diag\phi
\quad\text{ and }\quad
\frac{1}{\sqrt{\det H_b^W}}=\frac{\prod_{i\in V}\phi_i}{\sqrt{\det H_{b'}^{W'}}}.
\end{align}
Furthermore, $H_b^W>0$ if and only if $H_{b'}^{W'}>0$. Changing variables 
from $b$ to $b'$ we get the Jacobi determinant $|db/db'|=(\prod_{i\in V}\phi_i)^{-2}$. 
Abbreviating $\theta\phi=(\theta_i\phi_i)_{i\in V}$, we conclude
\begin{align}
& \int_{\{b\in\R^V: H_b^W>0\}} 
\frac{e^{-\tfrac12\left(\sk{\phi,H_b^W\phi}+\sk{\theta,(H_b^W)^{-1}\theta}\right)}
}{\sqrt{\det H_b^W}}\, db \cr 
= & \int_{\{b'\in\R^V: H_{b'}^{W'}>0\}} 
\frac{e^{-\tfrac12\left(\sk{1,H_{b'}^{W'}1}+\sk{\theta\phi,(H_{b'}^{W'})^{-1}\theta\phi}\right)}
}{\sqrt{\det H_{b'}^{W'}}\prod_{i\in V}\phi_i}\, db' 
= \left(\frac{\pi}{2}\right)^{\tfrac{|V|}{2}}
\frac{e^{-\sk{1,\phi\theta}}}{\prod_{i\in V}\phi_i},
\end{align}
where we used \eqref{eq:letac-xiaolin} for the special case $\phi=1$ treated in 
Section \ref{se:proof-letac-special-case} above.
Since $\sk{1,\phi\theta}=\sk{\phi,\theta}$ the claim \eqref{eq:letac-xiaolin} follows.

\vspace{1cm}\noindent
\textbf{Acknowledgments:} We would like to thank Christophe Sabot and 
Xiaolin Zeng for explaining their results and Letac's formula
to us in different occasions.

\end{document}